\newcommand {\R} {\mathbb{R}}
\newcommand {\N} {\mathbb{N}}
\newcommand{\supp}{\text{supp}}
\renewcommand{\u}{\textit{u}}
\newcommand {\F} {\mathcal{F}}
\newcommand{\eps}{\varepsilon}
\newcommand{\ie}{\textit{i.e.}}
\newcommand{\vphi}{\varphi}
\newcommand{\pierre}[1]{\textcolor{purple}{#1}}
\newcommand{\review}[1]{\textcolor{blue}{#1}}
\theoremstyle{thmstyleone}%
\newtheorem{theorem}{Theorem}
\newtheorem{prop}{Proposition}%
\newtheorem{lemma}{Lemma}%
\newtheorem{corollary}{Corollary}%
\theoremstyle{thmstyletwo}%
\newtheorem{example}{Example}%
\newtheorem{remark}{Remark}%
\theoremstyle{thmstylethree}%
\begin{document}

\title{A class of non-linear adaptive time-frequency transform}


\author*{\fnm{Pierre} \sur{Warion}}\email{pierre.warion@univ-amu.fr}
\author{\fnm{Bruno} \sur{Torr\'esani}}\email{bruno.torresani@univ-amu.fr}

\affil{\orgname{Aix-Marseille Univ, CNRS}, \orgdiv{I2M}, \city{Marseille}, \country{France}}


\abstract{This paper introduces a couple of new time-frequency transforms, designed to adapt their scale to specific features of the analyzed function. Such an adaptation is implemented via so-called focus functions, which control the window scale as a function of the time variable, or the frequency variable. In this respect, these transforms are non-linear, which makes the analysis more complex than usual.

Under appropriate assumptions, some norm controls are obtained for both transforms in $\textit{L}^2(\mathbb{R})$ spaces, which extend the classical continuous frame norm control and guarantees well-definedness on $\textit{L}^2$. Given the non-linearity of the transforms, the existence of inverse transforms is not guaranteed anymore, and is an open question. However, the results of this paper represent a first step towards a more general theory.

Besides mathematical results, some elementary examples of time and frequency focus functions are provided, together with corresponding focused transforms of real and synt-hetic signals. These can serve as starting point for concrete applications.
}

\keywords{Time-Frequency analysis, Non-linear transform, time-frequency trade-off, continuous frames}



\maketitle

\section{Introduction}\label{sec_intro}

\subsection{Context and purpose}

Time-frequency transforms and generalizations (wavelets and others) have long been used in various theoretical and applied domains. Besides quadratic transforms (Wigner distributions and generalizations), linear transforms such as the Gabor/STFT~\cite{daubechies1992ten,grochenig2013foundations} and wavelet transforms~\cite{Grossmann1984decomposition,daubechies1992ten,mallat2008wavelet,meyer1993wavelets} generally enjoy simple and useful invertibility properties, and therefore allow describing functions and signals as linear combination of building blocks, called time-frequency atoms. The time (and frequency/scale) resolution of the latter is specified by the construction rule: constant time and frequency resolution for Gabor/STFT, generated by translation and modulation, and constant relative frequency resolution for wavelets, generated by translation and scaling. See also~\cite{Kalisa1993weyl,Ali2000coherent,Fornasier2007banach} for alternative constructions that implement other scaling rules. Variants were also considered in specific applied domains, such as the Stockwell transform~\cite{Stockwell1996localization} in geophysics, which is very close to the constant $Q$ transform~\cite{Brown1991calculation,velasco2011constructing} we consider below, and the continuous wavelet transform.

In several application domains, in particular audio signal processing, it has been shown that adapting the scale of time-frequency atoms to the content of the signal can provide more efficient signal descriptions~\cite{Jaillet2007time,Liuni2013automatic,Leiber2023differentiable}. The window size is a main control parameter for the time-frequency resolution of the analysis: large windows provide good frequency resolution, while short windows yield good time resolution. The latter is constrained by the uncertainty principle, which can be given various quantitative formulations (see~\cite{Folland1997uncertainty,Ricaud2013refined} and references therein), and which basically states that precision in time domain is possible at the price of precision loss in frequency domain, and vice versa. The problem of tuning time-frequency resolution as a function of time or frequency has been addressed by various authors, in various contexts, often with pre-defined dependence, sometimes adaptively~\cite{Liuni2013automatic,Leiber2023differentiable}.  As a motivation, even though there is no complete consensus on psycho-physical aspects of human perception, it is known to involve several non-linear effects~\cite{Oxenham2018how}, and it has been claimed that this non-linearity allows going beyond time-frequency uncertainty in terms of localization~\cite{Oppenheim2013human}. Such strategy has been successfully implemented in some advanced audio coders such as AAC (see~\cite{Brandenburg1999mp3} for a short account), which can switch dynamically between short and long local cosine windows. Another possible motivation can be  hyper-resolution (separation of close locally harmonic components or close transients in signals for example). The adaptation is often driven by heuristic computations, for example the optimization of a sparsity criterion of the obtained time-frequency representation, using for example some form of entropy as in~\cite{Jaillet2007time,Liuni2013automatic} or other criteria~\cite{Leiber2023differentiable}. The adaptation may be implemented through a time (or frequency) dependent warping function, as in~\cite{Holighaus2019continuous}. To our knowledge, the non-linear problem where the time-frequency resolution is adapted to the analyzed function, has not been analyzed from the mathematical point of view so far.

\smallskip
The goal of the present paper is to introduce and study such adaptive time-frequency transforms, able to adapt their time-frequency resolution to the analyzed function.
This is done here by introducing a \textit{focus function} $f\mapsto\sigma_f$ which adapts the shape (size, bandwidth etc...) of the analysis window to specific properties of the analyzed signal $f$. 
Stepping away from fixed time-frequency resolution makes the analysis significantly more complex.
The purpose of this article is to introduce the non-linear transforms, prove that they are well-defined on $L^2$ and provide explicit, signal-dependent, lower and upper bounds for their norm (which depends on the choice of focus function).

We first introduce in Section~\ref{sec_time_focus} a time-focused transform $M^\tau$, which is a modified STFT where the window scale can be adapted to the signal at each time. The adaptation is done by associating with the analyzed signal $f$ a focus function $\sigma_f^\tau:\,t\mapsto \sigma^\tau_f(t)$. We prove in Theorem~\ref{theo_bounding_time_marseillan_transform} the well-definedness of $M^\tau$ as a map from $L^2(\R)$ into $L^2(\R^2)$ and obtain a norm control of the form $c_f\|f\|^2\le \|M^\tau f\|^2\le C_f\|f\|^2$, with explicit constants $c_f,C_f$, under suitable assumptions on the focus function $\sigma_f^\tau$. Building a frequency-focused STFT can be done along similar lines, and is not addressed here.
After briefly reviewing constant-Q transform~\cite{velasco2011constructing,holighaus2012framework} and continuous wavelet transform~\cite{Grossmann1984decomposition} and pointing out their relationship, we introduce in Section~\ref{sec_frequency_focus} a new frequency-focused transform $M^\nu$. The latter is built from a complex-valued wavelet transform, modified by a focus function defined in the frequency domain: to the analyzed signal $f$ is associated a focus function $\sigma_f^\nu: \,\omega\mapsto\sigma^\nu_f(\omega)$. We prove similar norm control and well-definedness results.
Explicit examples of focus functions are discussed in the context of continuous time, in Section~\ref{sec_time_focus} for time focus, which can be transposed to the frequency focus case. Section~\ref{se:numerical.experiments} is devoted to numerical illustrations, in discrete, finite-dimensional situations. There, we discuss and display examples of focus functions and illustrate the resulting time-focused and frequency-focused spectrograms, computed on real audio signals and toy examples.
Section~\ref{sec_conclusion} is devoted to conclusions and perspectives.

\smallskip
Except the section devoted to numerical illustrations, the analysis described in the present paper is limited to the continuous time setting and mostly $L^2(\R)$. Extensions to more general functional settings and issues related to discretization of the transforms will be the object of further work.

\subsection{Notation}

We first introduce or recall some notation. We will often use the notation $C([a,b],X)$ for the space of continuous functions from $\R$ into $X$, supported  in $[a,b]$. More generally, given $A\subset \R^\R$ we denote by $A_c$ the subspace of functions in $A$ with compact support, and by $A_0$ the subspace of functions in $A$ which vanish at infinity. We define $A+c:=\{f+c, f\in A\}$ for $c\in\R$ and $A^+$ denotes the subspace of $A$ of non-negative valued functions.

Given an open interval $I$ in $\R$, $C^n(I)$ stands for the space of function which are $n$ times continuously differentiable on $I$, and $C^\infty(I) = \cap_{n\geqslant 0} C^n(I)$. Furthermore, we denote  by $C_p(I)$ the space of piecewise continuous functions on $I$, and $C_{p,0}(I)$ the subspace of $C_p(I)$ consisting of functions that tend to 0 at $\pm\infty$. $C_c(I)$ denotes the space of continuous, compactly supported functions on $I$.

We recall that $L^p(\R^d,d\mu)$, $p\geqslant1$, stands for the set of $p-$integrable functions - where we identify functions that coincide almost everywhere - with respect to the measure $\mu$. The shorter notations $L^p(\R^d)$ and $L^p$ denote respectively $L^p(\R^d,dx)$ and $L^p(\R,dx)$ where $dx$ is the Lebesgue measure.

Given $f\in L^p(\R^d)$, its Fourier transform is either written $\F(f)$ or $\hat{f}$, and defined by the following convention.  For $f\in L^1(\R^d)$,
\begin{equation}
    \F(f)(\omega) := \int_{\R^d} f(x)e^{-2i\pi x\cdot\omega} dx\ ,\qquad\omega\in\R^d\ .
\end{equation}
With this definition, the inverse Fourier transform reads
\begin{equation}
\mathcal{F}^{-1}(F)(t):= \int_{\R^d}F(\omega)e^{2i\pi t\cdot \omega}d\omega\ , \qquad t\in\R^d\ .
\end{equation}

For functions of two variables, we introduce the notation
\begin{align}
   \forall x_1,x_2,\xi_1,\xi_2\in\R, \qquad \F_1 (f)(\xi_1,x_2)&:= \int_\R f(x_1,x_2)e^{-2i\pi x_1\xi_1}dx_1 \ ; \\
   \F_2 (f)(x_1,\xi_2)&:= \int_\R f(x_1,x_2)e^{-2i\pi x_2\xi_2}dx_2\ ,
\end{align}
and
\begin{align}
    \forall  x_1,x_2,\xi_1,\xi_2\in\R, \qquad \F^{-1}_1 (F)(x_1,\xi_2)&:= \int_\R F(\xi_1,\xi_2)e^{2i\pi x_1\xi_1}d\xi_1 \ ; \\
    \F^{-1}_2 (F)(\xi_1,x_2) & := \int_\R F(\xi_1,\xi_2)e^{2i\pi x_2\xi_2}d\xi_2\ .
\end{align}

\section{The non-linear time focused operator}\label{sec_time_focus}

\subsection{Atoms and time focused transform}
\label{sse:definitions.timefocus}
Let us now introduce and study the first non-linear transform of interest here, namely the time focused transform, which involves a focus function defined in the time domain.

\paragraph{Assumptions}

Throughout this section we will use the following assumptions.

\begin{enumerate}[i.]
    \item
    $h$ is a nonzero, continuous, compactly supported function, called \textit{window}, with length $l\in\R^+_*$. Examples of such a window include most windows used in signal processing (Hann, Blackman,...). 
    \item
    To every $f\in L^2(\R)$ is associated a function $\sigma_f^\tau$, called \textit{focus function}. $\sigma_f^\tau$ will be assumed to be larger than $1$, piecewise continuous and to tend to 1 at infinity, \ie
    \begin{equation}
    \forall f\in L^2(\R), \quad  \sigma^\tau_f \in C^+_{p,0}(\R) + 1.
    \end{equation}
    In addition we will assume that for every $f\in L^2$ there is a sequence $(f_n)_n\in (C_c^\infty)^\N$ such that 
    \begin{equation}\label{eq_hypothesis_cv_sigma}
    f_n \underset{n\rightarrow+\infty}{\overset{L^2}{\longrightarrow}} f \quad \text{ and } \quad \sigma_{f_n}^\tau \underset{n\rightarrow+\infty}{\overset{L^\infty}{\longrightarrow}} \sigma_f^\tau \ .
    \end{equation}
    In order to lighten the notations we will sometimes omit the subscript $\tau$ when there are no ambiguities. 
    \item $\gamma$ is a $C^1$ symmetrical diffeomorphism satisfying $\lim_{t\to -\infty}\gamma(t)=-\infty$  and $\lim_{t\to +\infty}\gamma(t)=+\infty$. 
\end{enumerate}

\begin{remark}
    The technical assumption~\eqref{eq_hypothesis_cv_sigma} on the map $f\to\sigma_f^\tau$ is used in the proof of Lemma~\ref{lemma_density_result} below. We believe it should be satisfied for most reasonable choices of mappings $f\to\sigma_f^\tau$.
\end{remark}

\paragraph{Atoms and transform definition} Given the above hypotheses, we can define the  time-focused atoms as
\begin{equation}
\label{fo:time.focused.atoms}
	\forall x,t,\omega \in\R , \ h_{t,\omega,\sigma_f^\tau}(x):=  \sqrt{\gamma'(\omega)\sigma_f^\tau(t)} e^{2i\pi\gamma(\omega)x}h(\sigma_f^\tau(t)(x-t))\ ,
\end{equation}
and the corresponding transform of a given signal $f\in L^2$ by
\begin{equation}\label{eq_def_operator_time_focus}
	\forall t,\omega\in\R, \ M^\tau f(t,\omega) := \langle f , h_{t,\omega,\sigma_f^\tau} \rangle_{L^2}.
\end{equation}
The pointwise definition of the scalar product is guaranteed by the fact that $h(\sigma_f^\tau(t)(x-t))$ is a continuous, compactly supported function of $x$.
When there are no ambiguities, we will sometimes write $h_{t,\omega,f}$ instead of $h_{t,\omega,\sigma_f}$ and $h_{t,\omega,n}$ instead of $h_{t,\omega,\sigma_{f_n}}$ for a certain sequence $(f_n)_n$.
\begin{remark}
\begin{enumerate}
    \item
    In the definition of time-focused atoms~\eqref{fo:time.focused.atoms}, $\sigma_f^\tau(t)$ performs a scaling of the window $h$ around $t$. We stress that the lower bound 1 of $\sigma_f^\tau$ is purely conventional, and states that rescaled windows cannot be larger than $h$. The choice of $h$ and the range of values of $\sigma_f^\tau$ therefore determine the overall resolution of the analysis.
    \item The function $\gamma$ performs a mere relabeling of the frequency axis.
    Examples of $\gamma$ functions include the obvious choice $\gamma(t)=t$, $\gamma$ can also be given a kind of hyperbolic sine shape like, which has the effect of compressing high frequencies.
\end{enumerate}
\end{remark}

\paragraph{Density result}

We first prove the following density result, which states that if we take a $L^2$ function $f$ and a $C_c$ function $f_\eps$ as close as possible to $f$ in $L^2$, then the norm of $M^\tau f$ will be controlled by the norm of $M^\tau f_\eps$, which is finite by basic integration rules.

\begin{lemma}\label{lemma_density_result}
    Let $f\in L^2(\R)$ and $(f_n)_{n\in\N}$ be a sequence such that $f_n \in C_c$ for all $n\in\N$, $f_n \rightarrow f$ in $L^2(\R)$ and $\sigma_{f_n}^\tau=:\sigma_n\rightarrow \sigma_f^\tau$ in $L^\infty$. Then
    \begin{equation}
        \forall \eps>0,\ \exists N\in\N,\ \forall n\geqslant N,\ \left|\left\|M^\tau f\right\|_{L^2} - \left\|M^\tau f_n \right\|_{L^2} \right| \leqslant \eps\ .
    \end{equation}
\end{lemma}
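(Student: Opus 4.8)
The plan is to establish the stronger claim that $\left\|M^\tau f-M^\tau f_n\right\|_{L^2(\R^2)}\to 0$ as $n\to\infty$. Once this is known, the finiteness of $\|M^\tau f_n\|_{L^2}$ (checked below) together with the reverse triangle inequality $\bigl|\,\|M^\tau f\|_{L^2}-\|M^\tau f_n\|_{L^2}\,\bigr|\le\|M^\tau f-M^\tau f_n\|_{L^2(\R^2)}$ gives at once that $\|M^\tau f\|_{L^2}<\infty$ and the desired estimate. Since $M^\tau$ is non-linear, $M^\tau f-M^\tau f_n$ is \emph{not} $M^\tau(f-f_n)$; writing $\phi:=\sigma_f^\tau$ and $\sigma_n:=\sigma_{f_n}^\tau$, I would instead split
\[
M^\tau f(t,\omega)-M^\tau f_n(t,\omega)=\bigl\langle f-f_n,\,h_{t,\omega,\phi}\bigr\rangle_{L^2}+\bigl\langle f_n,\,h_{t,\omega,\phi}-h_{t,\omega,\sigma_n}\bigr\rangle_{L^2},
\]
i.e.\ a ``signal perturbation at frozen focus'' term and a ``focus perturbation at frozen signal'' term, and bound each in $L^2(\R^2)$.

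Both terms are handled by the same device. For a fixed bounded focus function $\psi_0\ge1$ and $g\in L^2$, fix $t$: then $\langle g,h_{t,\omega,\psi_0}\rangle_{L^2}$ equals $\sqrt{\gamma'(\omega)\psi_0(t)}$ times the Fourier transform, evaluated at $\gamma(\omega)$, of the function $x\mapsto g(x)\,\overline{h(\psi_0(t)(x-t))}$, which lies in $L^1\cap L^2$ because $h$ is bounded with compact support. Integrating $|\langle g,h_{t,\omega,\psi_0}\rangle|^2$ over $\omega$, applying the change of variables $\xi=\gamma(\omega)$ (valid since $\gamma$ is an increasing $C^1$ diffeomorphism of $\R$, so $d\xi=\gamma'(\omega)\,d\omega$) and the Plancherel formula yields $\int_\R|\langle g,h_{t,\omega,\psi_0}\rangle|^2\,d\omega=\psi_0(t)\int_\R|g(x)|^2\,|h(\psi_0(t)(x-t))|^2\,dx$; integrating over $t$ and using Tonelli gives
\[
\bigl\|(t,\omega)\mapsto\langle g,h_{t,\omega,\psi_0}\rangle\bigr\|_{L^2(\R^2)}^2=\int_\R|g(x)|^2\,w_{\psi_0}(x)\,dx,\qquad w_{\psi_0}(x):=\int_\R\psi_0(t)\,|h(\psi_0(t)(x-t))|^2\,dt.
\]
Fixing $R>0$ with $\supp h\subseteq[-R,R]$, the bound $\psi_0\ge1$ forces $h(\psi_0(t)(x-t))=0$ unless $|x-t|\le R$, so $w_{\psi_0}(x)\le 2R\,\|h\|_\infty^2\,\|\psi_0\|_\infty$ uniformly in $x$. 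Taking $\psi_0=\phi$, $g=f-f_n$ controls the first term by $\sqrt{2R}\,\|h\|_\infty\,\|\phi\|_\infty^{1/2}\,\|f-f_n\|_{L^2}\to0$; taking $\psi_0=\sigma_n$, $g=f_n$ shows in addition $M^\tau f_n\in L^2(\R^2)$.

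For the genuinely non-linear second term, set $\psi_{t,n}(y):=\sqrt{\phi(t)}\,h(\phi(t)y)-\sqrt{\sigma_n(t)}\,h(\sigma_n(t)y)$, so that $h_{t,\omega,\phi}(x)-h_{t,\omega,\sigma_n}(x)=\sqrt{\gamma'(\omega)}\,e^{2i\pi\gamma(\omega)x}\,\psi_{t,n}(x-t)$. Running the same Fourier--Plancherel--Tonelli computation (with the $t$-dependent profile $\psi_{t,n}$ replacing $y\mapsto\sqrt{\psi_0(t)}\,h(\psi_0(t)y)$) gives
\[
\bigl\|(t,\omega)\mapsto\langle f_n,h_{t,\omega,\phi}-h_{t,\omega,\sigma_n}\rangle\bigr\|_{L^2(\R^2)}^2\le\Bigl(\,\sup_{x\in\R}\int_\R|\psi_{t,n}(x-t)|^2\,dt\Bigr)\,\|f_n\|_{L^2}^2.
\]
Since $\phi,\sigma_n\ge1$ and $\supp h\subseteq[-R,R]$, every $\psi_{t,n}$ is supported in $[-R,R]$ uniformly in $t,n$, hence $\int_\R|\psi_{t,n}(x-t)|^2\,dt\le 2R\,\sup_{t,y}|\psi_{t,n}(y)|^2$. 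With $m_h(\delta):=\sup_{|u-v|\le\delta}|h(u)-h(v)|$ the modulus of continuity of $h$, and $S:=\|\phi\|_\infty+1\ge\|\sigma_n\|_\infty$ for $n$ large (available since $\sigma_n\to\phi$ in $L^\infty$ and $\phi\in L^\infty$), the usual splitting gives $\sup_{t,y}|\psi_{t,n}(y)|\le\sqrt{S}\,m_h\bigl(R\|\phi-\sigma_n\|_\infty\bigr)+\|h\|_\infty\,\|\phi-\sigma_n\|_\infty^{1/2}\to0$ by uniform continuity of $h$ on $[-R,R]$. As $\|f_n\|_{L^2}$ is bounded, the second term tends to $0$; adding the two bounds completes the proof.

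The step I expect to be the main obstacle is exactly this second term: one cannot use the naive pointwise estimate $|\langle f_n,h_{t,\omega,\phi}-h_{t,\omega,\sigma_n}\rangle|\le\|f_n\|_{L^2}\,\|h_{t,\omega,\phi}-h_{t,\omega,\sigma_n}\|_{L^2}$ followed by integration over $(t,\omega)$, because $\|h_{t,\omega,\phi}\|_{L^2}^2=\gamma'(\omega)\|h\|_{L^2}^2$ is not integrable in $\omega$; one must first exploit the oscillatory structure via Plancherel in the frequency variable, and only then the quantitative core is the uniform-in-$(t,y)$ smallness of $\psi_{t,n}$, which uses both the modulus of continuity of the fixed window $h$ and the confinement of the relevant spatial variable to the fixed compact $[-R,R]$ (a consequence of all dilation factors being $\ge1$).
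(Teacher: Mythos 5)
Your proof is correct, and it takes a genuinely different route from the paper's. The paper never compares $M^\tau f$ and $M^\tau f_n$ as elements of $L^2(\R^2)$: it uses the Plancherel computation to reduce both squared norms to the explicit expression $\int_{\R^2}\sigma(t)\,|f(x)|^2\,h^2(\sigma(t)(x-t))\,dx\,dt$, and then bounds the difference of these two scalar quantities by splitting it into a signal-perturbation term (at frozen focus $\sigma_f$) and a focus-perturbation term (at frozen signal $f_n$). You perform the analogous splitting one level earlier, on the transform itself, and prove the strictly stronger statement $\|M^\tau f - M^\tau f_n\|_{L^2(\R^2)}\to 0$; the reverse triangle inequality then yields the lemma. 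Both arguments share the same computational engine (Plancherel in the frequency variable after the substitution $\xi=\gamma(\omega)$, reducing everything to weighted $L^2$ norms over $(x,t)$ with weights confined to $|x-t|\le R$ because all dilation factors are $\ge 1$), and both ultimately need the uniform continuity of the compactly supported window to control the focus-perturbation term — a point you handle more explicitly, via the modulus of continuity $m_h$, than the paper does. What your version buys is a quantitative sequential-continuity statement for the non-linear map $M^\tau$ along approximating sequences satisfying~\eqref{eq_hypothesis_cv_sigma}, which is exactly the kind of property the paper flags in its conclusion (Lipschitz-type estimates) as desirable but unproven; the paper's version is marginally shorter because it never has to argue that the pointwise-defined difference $M^\tau f - M^\tau f_n$ is a measurable, square-integrable function on $\R^2$, though your decomposition into two terms each shown to lie in $L^2(\R^2)$ disposes of that issue cleanly.
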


\begin{proof}
    Let $\tilde{\eps}>0$ and $n\in\N$ such that $\|f_n-f\|_{L^2}<\tilde{\eps}$ and $\|\sigma_n-\sigma_f\|_\infty<\tilde{\eps}$. We have
    \begin{align*}
        \left| \|M^\tau f_n \|^2_{L^2} - \|M^\tau f\|^2_{L^2} \right| & = \left| \int_{\R^2} |\langle f_n , h_{t,\omega,n}\rangle|^2 d\omega dt - \int_{\R^2} |\langle f , h_{t,\omega,f}\rangle|^2 d\omega dt \right|
    \end{align*}
    We first compute, setting $F_n(x,t):=\sqrt{\sigma_n(t)}f_n(x)h(\sigma_n(t)(x-t))$
    \begin{align*}
       \int_{\R^2} |\langle f_n , h_{t,\omega,n}\rangle|^2 d\omega dt &= 
       \int_{\R^4}F_n(x,t)\bar{F}_n(x',t)\gamma'(\omega)e^{2i\pi\gamma(\omega)(x-x')}dxdx'd\omega dt.
    \end{align*}
    We recognize the formula of $\F^{-1}_1(F_n)(\gamma(u),t)$, hence
    \begin{align*}
        \int_{\R^2} |\langle f_n , h_{t,\omega,n}\rangle|^2 d\omega dt 
        &= \int_{\R^2}\gamma'(\omega)|\F^{-1}_1(F_n)(\gamma(u),t)|^2 d\omega dt \\
        &= \int_{\R^2} |\F^{-1}_1(F_n)(u,t)|^2 dudt \\
       &= \int_{\R^2} |F_n(x,t)|^2 dxdt = \int_{\R^2}\sigma_n(t)|f_n(x)|^2 h^2(\sigma_n(t)(x-t)) dxdt.
    \end{align*}
    The same computations give 
    \begin{equation*}
        \int_{\R^2} |\langle f , h_{t,\omega,f}\rangle|^2 d\omega dt = \int_{\R^2} \sigma_f(t)|f(x)|^2 h^2(\sigma_f(t)(x-t)) dx dt\ .
    \end{equation*}
    We can then write
    \begin{align*}
           \left| \|M^\tau f_n \|^2_{L^2} - \|M^\tau f\|^2_{L^2} \right|  & = \left| \int_\R A(t)dt + \int_\R B(t)dt \right|\ ,
    \end{align*}
    with
    \begin{align*}
        A(t) &:= \int_\R \sigma_f(t)h^2(\sigma_f(t)(x-t))\left(|f_n(x)|^2-|f(x)|^2\right) dx\ .
    \end{align*}
    Therefore,
    \begin{equation*}
        \int_\R |A(t)|dt \leqslant \|\sigma_f\|_\infty \|h\|_{\infty}\tilde{\eps}\ .
    \end{equation*}
    Similarly,
    \begin{equation*}
    B(t):= \int_\R \sigma_n(t)|f_n(x)|^2\left(h^2(\sigma_n(t)(x-t))-h^2(\sigma_f(t)(x-t))\right)dx\ ,
    \end{equation*}
    thus
    \begin{equation*}
        \int_\R |B(t)|dt \leqslant (\|\sigma_f\|_\infty+\tilde{\eps})(\|f\|_{L^2}+\tilde{\eps})^2 2l\,\tilde{\eps}\ .
    \end{equation*}
    Since $\tilde{\eps}$ is arbitrary, the result follows.
\end{proof}

When combined with Proposition~\ref{prop_norm_control_time_operator_L_1_lemma} below, Lemma~\ref{lemma_density_result} will show that $M^\tau$ is well-defined on $L^2$.

\paragraph{Motivations and examples for the time focus function}\label{ssec_time_focus_function}
As stressed in the introduction, it is not the goal of the current paper to discuss in details explicit choices for the focus functions that would be relevant in specific applications. We only provide a couple of prototypical examples, to illustrate desirable and undesirable properties.

Denote by $Vf$ the time-focused transform of $f\in L^2(\R)$, with a constant focus function $\sigma(t)=\sigma_\mathsf{ref}$ (in other words, a STFT with prescribed time-frequency resolution), and assume for simplicity $\gamma(t)=t$. If the goal is to increase the time resolution of the analysis when the analyzed signal $f\in L^2(\R)$ has faster variations, a natural idea could be to consider fixed-time slices of $Lf$ and compute weighted norms of the form
\[
\sigma_f^\tau(t) = 1 + \int_\R w(\omega) |Vf(\omega,t)|\,d\omega \ ,
\]
for some weight function $w$ which enhances the contribution of high frequencies (typically $w(\omega) = |\omega|^n$). This quantity is well-defined as soon as $\sup_t \int_\R w(\omega) |Vf(\omega,t)|\,d\omega<\infty$; furthermore, it defines a continuous function that tends to 1 as $t\to\pm\infty$, since $t\to Vf(t,\omega)$ may be written as the convolution product of two $L^2$ functions (up to a phase factor).

However, such a choice is too naive, as the second term is homogenous of degree 1 with respect to $f$, which results in an increase of the focus function $\sigma$ when $f$ is multiplied by a constant (in which case one can hardly pretend that the resulting function has facter variations). For these reasons, we will privilege non-linear terms, such as terms involving norm ratios. Natural examples involving such ratio are given by entropies such as the Rényi or Shannon entropies
\[
R_\alpha(t) = \frac{1}{1-\alpha} \log \frac{\|Vf(t,\cdot)\|_{L^{\alpha}}^{\alpha}}{\|Vf(t,\cdot)\|_{L^{1}}^{\alpha}}\ ,\qquad H(t) = \lim_{\alpha\to 1} R_\alpha(t)\ ,
\]
assuming the above quantities are well defined and nonzero.

Entropy is generally used as a measure of spread (or information content as in~\cite{Baraniuk2001measuring}): the more spread out the function, the larger the entropy, independently of global normalization. In the context under consideration here, a large value of $R_\alpha(t)$ would indicate that the "energy" of the spectrogram $|Lf(t,\cdot)|$ is spread throughout the whole frequency domain, which can be interpreted in terms of the presence of a transient event in the signal $f$ at time $t$, and would then require a more time-focused analysis. A corresponding focus function could be defined as
\[
\sigma_f^\tau(t) = A R_\alpha(t) + B\ ,
\]
for some constants $A,B$ that would control the range of $\sigma_f^\tau$, or a similar expression using the Shannon entropy. The mathematical analysis of the behavior of such functions defined on the real line is out of the scope of the present paper. We shall discuss adaptations in discrete, finite-dimensional situations, in Section~\ref{se:numerical.experiments} devoted to numerical illustrations. 
\subsection{Norm relationship}

We first express the $L^2$ norm of $Mf$ in terms of a certain kernel, which guarantees the well-definedness of the transform on $L^2$, and will be  useful for the rest of the study.

\begin{prop}
    Using the previous definition, for $f\in C_c$ we have then
	\begin{equation}
		\|M^\tau f\|_{L^2(\R^2)}^2 = \int_{\R^2} \hat{f}(\xi)\bar{\hat{f}}(\xi') K_{\sigma_f^\tau}(\xi-\xi') d\xi d\xi',
	\end{equation}
	where the  kernel is 
	\begin{equation}\label{eq_def_time_kernel}
		K_{\sigma_f^\tau}(u) := \int_{\R^2}e^{-2i\pi u t}\bar{\hat{h}}(z)\hat{h}\left(z-\tfrac{u}{\sigma_f^\tau(t)}\right)\, dzdt\ .
	\end{equation}
\end{prop}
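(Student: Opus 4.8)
The plan is to compute $\|M^\tau f\|_{L^2(\R^2)}^{2}=\int_{\R^2}|M^\tau f(t,\omega)|^{2}\,dt\,d\omega$ by expanding the inner product \eqref{eq_def_operator_time_focus}, inserting the Fourier inversion formula for $f$, and integrating the variables out in a carefully chosen order: first the space variable and the frequency variable $\omega$ \emph{at a fixed $t$}, where Plancherel's theorem applies cleanly, and only afterwards the remaining auxiliary frequency variables. At fixed $t$ I would freeze the rescaled window by setting $g_t(x):=\sqrt{\sigma_f^\tau(t)}\,h\big(\sigma_f^\tau(t)(x-t)\big)\in C_c$, so that \eqref{fo:time.focused.atoms} becomes $M^\tau f(t,\omega)=\sqrt{\gamma'(\omega)}\,\F\big(f\,\overline{g_t}\big)\big(\gamma(\omega)\big)$. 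Since $f\,\overline{g_t}\in L^1\cap L^2$ and $\gamma$ is an increasing $C^1$ diffeomorphism of $\R$, the substitution $\eta=\gamma(\omega)$ absorbs the Jacobian $\gamma'(\omega)$ and Plancherel gives, for every $t$,
\[
\int_{\R}|M^\tau f(t,\omega)|^{2}\,d\omega=\int_{\R}\big|\F(f\,\overline{g_t})(\eta)\big|^{2}\,d\eta=\big\langle\,|f|^{2},\,|g_t|^{2}\,\big\rangle_{L^2}.
\]
(Integrating this in $t$ recovers the time-domain expression underlying Lemma~\ref{lemma_density_result}; here I want instead to move it to the frequency side.)

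Next I would apply Parseval, $\langle|f|^{2},|g_t|^{2}\rangle=\langle\F(|f|^{2}),\F(|g_t|^{2})\rangle$, and compute the two transforms explicitly: one has $\F(|f|^{2})(v)=\int_{\R}\hat f(\xi)\,\overline{\hat f(\xi-v)}\,d\xi$, while the substitution $y=\sigma_f^\tau(t)(x-t)$ gives $\F(|g_t|^{2})(v)=e^{-2i\pi tv}\,\F(|h|^{2})\big(v/\sigma_f^\tau(t)\big)$, and a one-line Parseval computation identifies $\F(|h|^{2})(a)=\int_{\R}\overline{\hat h(z)}\,\hat h(z+a)\,dz$. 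Substituting these, putting $v=\xi-\xi'$ (a unit-Jacobian change of variables from $(\xi,v)$ to $(\xi,\xi')$), using that $|h|^{2}$ is real to rewrite $\overline{\F(|h|^{2})}$ with a reflected argument, and finally gathering the $z$- and $t$-integrations, one is left precisely with
\[
\|M^\tau f\|_{L^2(\R^2)}^{2}=\int_{\R^2}\hat f(\xi)\,\overline{\hat f}(\xi')\,K_{\sigma_f^\tau}(\xi-\xi')\,d\xi\,d\xi',
\]
with $K_{\sigma_f^\tau}$ exactly the kernel \eqref{eq_def_time_kernel}.

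The step I expect to be the genuine obstacle is justifying these interchanges of integration. Everything up to the displayed fixed-$t$ identity is rigorous: the space and $\omega$ integrations reduce, through Plancherel on $f\,\overline{g_t}\in C_c$, to honest $L^2$ equalities, and one checks that $t\mapsto\langle|f|^2,|g_t|^2\rangle=\int_{\R}|f(x)|^2|g_t(x)|^2\,dx$ is integrable — its $t$-support lies in $\supp f$ enlarged by the window length $l$, and $\sigma_f^\tau\ge1$ — so that $\|M^\tau f\|_{L^2}^2<\infty$, which is already the well-definedness on $C_c$. What is delicate is extracting $K_{\sigma_f^\tau}$ as a standalone object, i.e.\ pulling the outer $t$-integral through the $(\xi,\xi')$-integral: the crude majorants bring in $\int_{\R}\gamma'(\omega)\,d\omega$ and $\int_{\R}dt$, both infinite, and $\int_{\R}\overline{\hat h(z)}\hat h\big(z-u/\sigma_f^\tau(t)\big)\,dz$ does not vanish as $|t|\to\infty$ (it tends to the autocorrelation of $\hat h$ because $\sigma_f^\tau(t)\to1$), so no global application of Fubini is available. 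One therefore has to read the final identity — and the very definition of $K_{\sigma_f^\tau}$ — in an iterated/limiting sense; alternatively, first reduce by Lemma~\ref{lemma_density_result} to $f\in C_c^\infty$, so that $\hat f$ is Schwartz and all the $\xi,\xi'$-integrations are absolutely convergent, and then pass to the limit. Everything else is routine bookkeeping with changes of variables.
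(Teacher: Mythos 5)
Your argument is correct and arrives at the stated identity, but by a genuinely different route from the paper's. The paper expands $|\langle \hat f,\hat h_{t,\omega,\sigma}\rangle|^2$ into a fourfold integral, interchanges all integrations at once to isolate $K_\sigma(\xi,\xi')=\int_{\R^2}\overline{\hat h_{t,\omega,\sigma}(\xi)}\,\hat h_{t,\omega,\sigma}(\xi')\,dt\,d\omega$, and evaluates it through the substitutions $y=-\gamma(\omega)/\sigma(t)$ and $z=y+\xi/\sigma(t)$, noting a posteriori that the result depends only on $\xi-\xi'$. You instead integrate out $\omega$ first at fixed $t$ by Plancherel --- recovering $\int_\R|M^\tau f(t,\omega)|^2\,d\omega=\langle|f|^2,|g_t|^2\rangle$, which is precisely the computation the paper carries out in the proof of Lemma~\ref{lemma_density_result} --- and then move to the frequency side with Parseval and the convolution theorem. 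The two routes are computationally equivalent, but yours cleanly separates the absolutely convergent steps from the one genuinely delicate interchange, which you rightly single out: pulling the $t$-integral through the $(\xi,\xi')$-integral is not covered by Fubini, and $K_{\sigma_f^\tau}(u)$ in~\eqref{eq_def_time_kernel} is itself only an iterated (oscillatory) integral, since the inner $z$-integral is a bounded function of $t$ tending to the generically nonzero autocorrelation of $\hat h$; the paper performs this interchange silently, and even your Schwartz reduction does not restore absolute convergence in $t$, so the iterated/limiting reading you propose is the honest one. One last point: carried to the end, your computation yields $e^{+2i\pi ut}$ in the kernel where~\eqref{eq_def_time_kernel} has $e^{-2i\pi ut}$; a direct evaluation of $\widehat{h_{t,\omega,\sigma}}(\xi)=\sqrt{\gamma'(\omega)/\sigma(t)}\,e^{-2i\pi t(\xi-\gamma(\omega))}\,\hat h\big((\xi-\gamma(\omega))/\sigma(t)\big)$ confirms your sign, so this discrepancy appears to be a sign slip in the paper's displayed kernel rather than an error in your argument.
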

\begin{proof}
	In order to make notations lighter we use $\sigma$ for $\sigma_f^\tau$.
	\begin{align*}
		\|M^\tau f\|_{L^2}^2 & = \int_{\R^2} |\langle \hat{f},\hat{h}_{t,\omega,\sigma}\rangle|^2dtd\omega \\
		& = \int_{\R^4} \hat{f}(\xi)\bar{\hat{f}}(\xi')\bar{\hat{h}}_{t,\omega,\sigma}(\xi)\hat{h}_{t,\omega,\sigma}(\xi') d\xi d\xi' dt d\omega \\
		& = \int_{\R^2}\hat{f}(\xi)\bar{\hat{f}}(\xi') \underset{=:K_{\sigma}(\xi,\xi')}{\underbrace{\int_{\R^2}\bar{\hat{h}}_{t,\omega,\sigma}(\xi)\hat{h}_{t,\omega,\sigma}(\xi')dt d\omega}} d\xi d\xi'
	\end{align*}
	We have
	\begin{equation*}
		K_{\sigma}(\xi,\xi'):= \int_{\R^2}\frac{\gamma'(\omega)}{\sigma(t)}e^{-2i\pi(\xi-\xi')t}\bar{\hat{h}}\left(\tfrac{\xi-\gamma(\omega)}{\sigma(t)}\right)\hat{h}\left(\tfrac{\xi'-\gamma(\omega)}{\sigma(t)}\right)\,d\omega dt\ .
	\end{equation*}
	Hence, by setting $y(\omega)=-\frac{\gamma(\omega)}{\sigma(t)}$ we obtain
	\begin{align*}
		K_{\sigma}(\xi,\xi') &= \int_{\R^2} \bar{\hat{h}}\left(\tfrac{\xi}{\sigma(t)}+y\right)\hat{h}\left(\tfrac{\xi'}{\sigma(t)}+y\right)\,e^{-2i\pi(\xi-\xi')t}\,dydt \\
		&= \int_{\R^2}e^{-2i\pi(\xi-\xi')t} \bar{\hat{h}}(z)\hat{h}\left(z-\tfrac{\xi-\xi'}{\sigma(t)}\right)\,dzdt\ .
	\end{align*}
	The last equality holds by applying the changing of variable $z=y+\frac{\xi}{\sigma(t)}$. And now since the kernel is a function of $\xi-\xi'$ we can deduce the expected result.
\end{proof}

\begin{corollary}
	We have, for $f\in C_c(\R)$,
	\begin{equation}
		\|M^\tau f\|_{L^2(\R^2)}^2 = \int_\R |f|^2(t)\F^{-1}(K_{\sigma_f^\tau})(t) dt.
	\end{equation}
\end{corollary}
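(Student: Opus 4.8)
The plan is to collapse the double frequency integral in the preceding Proposition into a single integral of $|f|^2$ against (the inverse Fourier transform of) the kernel. The natural mechanism is a double use of Fourier inversion. Starting from $\|M^\tau f\|_{L^2(\R^2)}^2=\int_{\R^2}\hat f(\xi)\overline{\hat f(\xi')}K_{\sigma_f^\tau}(\xi-\xi')\,d\xi\,d\xi'$, one writes the kernel as the Fourier transform of its inverse Fourier transform, substitutes, exchanges the order of integration (Fubini), and is left with an integral over $t$ of $\F^{-1}(K_{\sigma_f^\tau})(t)$ times $\bigl(\int_\R\hat f(\xi)e^{2i\pi\xi t}\,d\xi\bigr)\overline{\bigl(\int_\R\hat f(\xi')e^{2i\pi\xi't}\,d\xi'\bigr)}$; a second application of Fourier inversion, now to $f$, identifies each of these inner integrals with $\F^{-1}(\hat f)(t)=f(t)$, whence the bracketed factor equals $|f(t)|^2$ and the formula follows. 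The same computation can be phrased through the change of variable $u=\xi-\xi'$: the double integral becomes $\int_\R K_{\sigma_f^\tau}(u)\bigl(\int_\R\hat f(\eta+u)\overline{\hat f(\eta)}\,d\eta\bigr)\,du$, the inner integral is the autocorrelation of $\hat f$, equal to $\widehat{|f|^2}(u)$ by Parseval, and the Fourier duality then produces $\int_\R|f(t)|^2\F^{-1}(K_{\sigma_f^\tau})(t)\,dt$; the Hermitian symmetry $K_{\sigma_f^\tau}(-u)=\overline{K_{\sigma_f^\tau}(u)}$, immediate from~\eqref{eq_def_time_kernel}, is what makes $\F^{-1}(K_{\sigma_f^\tau})$ real-valued, as it must be.

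The step I expect to require the most care is the legitimacy of these exchanges of integration: for $f\in C_c$ one only has $\hat f\in L^2$ (not $L^1$), and~\eqref{eq_def_time_kernel} is a priori only a conditionally convergent oscillatory integral, so Fubini and Fourier inversion are not applicable verbatim. I would make the argument rigorous by avoiding the kernel until the last moment. The computation already performed in the proof of Lemma~\ref{lemma_density_result} (which uses only Plancherel's theorem and a change of variable, and is valid as written for $f\in C_c$) gives $\|M^\tau f\|_{L^2(\R^2)}^2=\int_{\R^2}\sigma_f^\tau(t)\,|f(x)|^2\,|h(\sigma_f^\tau(t)(x-t))|^2\,dx\,dt$; since the integrand is nonnegative, Tonelli's theorem yields $\|M^\tau f\|_{L^2(\R^2)}^2=\int_\R|f(x)|^2\,g(x)\,dx$ with $g(x):=\int_\R\sigma_f^\tau(t)\,|h(\sigma_f^\tau(t)(x-t))|^2\,dt$, a bounded function (the $t$-integral runs over $|x-t|\le l$ since $\sigma_f^\tau\ge1$). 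It then suffices to check $g=\F^{-1}(K_{\sigma_f^\tau})$, which is a direct computation from~\eqref{eq_def_time_kernel} using the Plancherel identity $\int_\R\overline{\hat h(z)}\hat h(z-a)\,dz=\int_\R|h(y)|^2e^{2i\pi ay}\,dy$, together with the symmetry of the window $h$ (satisfied by the standard windows named among the assumptions). If one prefers to stay with the first, more conceptual argument, the manipulations can instead be justified first for $f\in C_c^\infty$ — where $\hat f\in\mathcal{S}(\R)$ and everything is unconditional — and then extended to $f\in C_c$ by approximation, using Lemma~\ref{lemma_density_result}, hypothesis~\eqref{eq_hypothesis_cv_sigma}, and the continuity of $g$ with respect to $\sigma_f^\tau$ in the uniform norm.
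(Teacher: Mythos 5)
Your first argument is, up to phrasing, the paper's own proof: the paper writes the double integral of the preceding Proposition as $\langle \hat f, K_{\sigma}\ast\bar{\hat f}\rangle$, applies Plancherel, and uses the convolution theorem, which is exactly your ``autocorrelation of $\hat f$ equals $\widehat{|f|^2}$ plus Fourier duality'' computation. Where you genuinely go beyond the paper is in the second half. The paper performs the manipulation formally and does not address the points you raise (for $f\in C_c$ one only has $\hat f\in L^2$, and the kernel~\eqref{eq_def_time_kernel} is a priori an oscillatory integral), whereas your alternative route --- bypassing the kernel, reusing the identity $\|M^\tau f\|_{L^2(\R^2)}^2=\int_{\R^2}\sigma_f^\tau(t)\,|f(x)|^2\,|h(\sigma_f^\tau(t)(x-t))|^2\,dx\,dt$ already established via Plancherel in the proof of Lemma~\ref{lemma_density_result}, applying Tonelli to the nonnegative integrand, and only then identifying the resulting bounded weight $g$ with $\F^{-1}(K_{\sigma_f^\tau})$ --- is both more elementary and more rigorous, and is arguably what is needed to make the corollary airtight. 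The one point to watch is the identification itself: your $g(x)=\int_\R\sigma_f^\tau(t)\,|h(\sigma_f^\tau(t)(x-t))|^2\,dt$ and the paper's formula~\eqref{eq_formula_inverse_fourier_kernel} differ by a reflection in the argument of $h$ (the paper has $\sigma(x)(x-t)$ with $x$ the integration variable, i.e.\ ``integration point minus evaluation point''), so the two expressions agree only when $|h|$ is even; you correctly flag this and invoke the symmetry of the window, which holds for all the windows the paper names (Hann, Blackman, truncated Gaussian), so the discrepancy is harmless here --- but it is worth noting that it traces back to a sign/conjugation looseness in the paper's computation of $K_\sigma(\xi,\xi')$ rather than to any flaw in your argument.
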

\begin{proof}
	The proof uses the Plancherel equality and the fact that the Fourier transform of a convolution product is the product of the Fourier transform. It is an elementary computation
	\begin{align*}
		\|M^\tau f\|_{L^2}^2  &= \int_{\R^2} \hat{f}(\xi)\bar{\hat{f}}(\xi') K_{\sigma}(\xi-\xi') d\xi d\xi \\
		& = \int_\R \hat{f}(\xi) \int_\R \bar{\hat{f}}(\xi')K_\sigma(\xi-\xi')d\xi' d\xi \\
		& = \langle \hat{f} , K_\sigma \ast \bar{\hat{f}} \rangle_{L^2_\xi} \\
		& = \int_\R f(t) \F^{-1}\left( K_\sigma \ast \bar{\hat{f}} \right)(t) dt \\
		& = \int_\R |f|^2(t)\F^{-1}(K_\sigma)(t)dt. 
	\end{align*}
\end{proof}

Now we can see that the problem can be solved by controlling the kernel $K_{\sigma_f^\tau}$. In order to obtain such a control, we can introduce the explicit formula of $\F^{-1}(K_{\sigma_f^\tau})$.
\begin{equation}\label{eq_formula_inverse_fourier_kernel}
		\forall t\in\R, \ \F^{-1}(K_{\sigma_f^\tau})(t) = \int_\R \sigma_f^\tau(x)|h(\sigma_f^\tau(x)(x-t))|^2dx.
\end{equation}

\subsection{Main result : norm control}

The rest of the section is dedicated to the proof of Theorem~\ref{theo_bounding_time_marseillan_transform} below.

\begin{theorem}\label{theo_bounding_time_marseillan_transform}
	Let $f\in L^2(\R)$ and $\sigma_f^\tau$ be a time focus parameter, then
	\begin{equation}
		c_f \|f\|_{L^2(\R)}^2 \leqslant\| M^\tau f\|_{L^2(\R^2)}^2 \leqslant C_f \|f\|_{L^2(\R)}^2\ ,
	\end{equation} 
	where
	\begin{equation}
		c_f = \inf_{t\in\R}\left\{ \int_\R|h(x\sigma_f^\tau(x+t))|^2 dx \right\}>0 \ ,
	\end{equation}
	and
	\begin{equation}
		C_f = \int_\R |h(\sigma_f^\tau(t)t)|^2\sigma_f^\tau(t)dt < \infty \ .
	\end{equation}
\end{theorem}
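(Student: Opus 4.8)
The plan is to reduce the $L^2(\R)$ statement to the case $f\in C_c(\R)$ already under control via the corollary, and there to bound the function $\F^{-1}(K_{\sigma_f^\tau})$ pointwise from above and below. From the corollary we have, for $f\in C_c$,
\[
\|M^\tau f\|_{L^2}^2 = \int_\R |f(t)|^2\,\F^{-1}(K_{\sigma_f^\tau})(t)\,dt,
\]
so it suffices to show
\[
c_f \leqslant \F^{-1}(K_{\sigma_f^\tau})(t) \leqslant C_f \qquad \text{for all } t\in\R,
\]
with $c_f,C_f$ as in the statement. Using the explicit formula~\eqref{eq_formula_inverse_fourier_kernel}, $\F^{-1}(K_{\sigma_f^\tau})(t)=\int_\R \sigma_f^\tau(x)|h(\sigma_f^\tau(x)(x-t))|^2\,dx$. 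For the upper bound I would substitute $u = x-t$ and use $\sigma_f^\tau(x)=\sigma_f^\tau(u+t)\leqslant \|\sigma_f^\tau\|_\infty$ together with a further change of variable; more directly, bounding $\sigma_f^\tau(x)\,|h(\sigma_f^\tau(x)(x-t))|^2$ by comparing with the integrand evaluated at the point where the argument of $h$ vanishes, one recognizes that the supremum over $t$ of the integral is attained in a way that yields exactly $C_f=\int_\R |h(\sigma_f^\tau(t)t)|^2\sigma_f^\tau(t)\,dt$ — here I would be careful to justify, using $\sigma_f^\tau\geqslant 1$ and the support/continuity of $h$, that the integral in~\eqref{eq_formula_inverse_fourier_kernel} does not exceed this quantity. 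For the lower bound, performing the change of variable $x\mapsto x+t$ inside~\eqref{eq_formula_inverse_fourier_kernel} and then using $\sigma_f^\tau\geqslant 1$ to drop the prefactor $\sigma_f^\tau(x+t)\geqslant 1$ gives
\[
\F^{-1}(K_{\sigma_f^\tau})(t) = \int_\R \sigma_f^\tau(x+t)\,|h(\sigma_f^\tau(x+t)\,x)|^2\,dx \geqslant \int_\R |h(\sigma_f^\tau(x+t)\,x)|^2\,dx \geqslant c_f,
\]
by definition of $c_f$ as the infimum over $t$ of the last integral. I would then verify $c_f>0$: since $h$ is nonzero and continuous with support of length $l$, $h$ is bounded below in modulus on some nondegenerate interval $J$; because $1\leqslant \sigma_f^\tau \leqslant \|\sigma_f^\tau\|_\infty<\infty$, for each $t$ the set $\{x:\ \sigma_f^\tau(x+t)x\in J\}$ contains an interval whose length is bounded below uniformly in $t$ (as $\sigma_f^\tau$ is bounded above), so the integral stays bounded away from $0$; a compactness/continuity argument on $t$, or this uniform estimate directly, gives $c_f>0$. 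Similarly $C_f<\infty$ because the integrand is supported where $\sigma_f^\tau(t)|t|\lesssim l$, i.e. on a bounded set (again using $\sigma_f^\tau\geqslant 1$), on which $\sigma_f^\tau$ and $h$ are bounded.

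Having established the bounds for $f\in C_c$, I would pass to general $f\in L^2(\R)$ by density: pick, via assumption~\eqref{eq_hypothesis_cv_sigma}, a sequence $(f_n)\subset C_c^\infty$ with $f_n\to f$ in $L^2$ and $\sigma_{f_n}^\tau\to \sigma_f^\tau$ in $L^\infty$. By Lemma~\ref{lemma_density_result}, $\|M^\tau f_n\|_{L^2}\to\|M^\tau f\|_{L^2}$, which in particular shows $M^\tau f\in L^2(\R^2)$ (well-definedness). It remains to check that the constants pass to the limit: $c_{f_n}\to c_f$ and $C_{f_n}\to C_f$. For $C_{f_n}$ this follows from~\eqref{eq_formula_inverse_fourier_kernel}-type reasoning and dominated convergence, using the uniform convergence $\sigma_{f_n}^\tau\to\sigma_f^\tau$ together with the uniform support bound $\sigma^\tau(t)|t|\lesssim l$ and continuity of $h$; for $c_{f_n}$ one argues that $t\mapsto \int_\R |h(\sigma^\tau(x+t)x)|^2dx$ depends continuously (indeed Lipschitz-type in the $L^\infty$ norm of $\sigma^\tau$, on the relevant bounded $x$-range) on $\sigma^\tau$, uniformly in $t$, so the infimum over $t$ converges. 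Passing to the limit in $c_{f_n}\|f_n\|^2\leqslant \|M^\tau f_n\|^2\leqslant C_{f_n}\|f_n\|^2$ then yields the claimed inequality for $f$, and $c_f>0$, $C_f<\infty$ are inherited from the $C_c$ case (or re-derived directly for $\sigma_f^\tau$, which satisfies the same structural hypotheses).

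The main obstacle I anticipate is the upper bound constant: showing that $\sup_{t}\int_\R \sigma_f^\tau(x)|h(\sigma_f^\tau(x)(x-t))|^2dx$ is controlled by $C_f=\int_\R |h(\sigma_f^\tau(t)t)|^2\sigma_f^\tau(t)dt$ is not a one-line estimate, since the natural change of variables $y=\sigma_f^\tau(x)(x-t)$ is complicated by the $x$-dependence of $\sigma_f^\tau$ (its Jacobian involves $(\sigma_f^\tau)'$, which need not exist, $\sigma_f^\tau$ being only piecewise continuous). The way around this is to avoid any diffeomorphic substitution and instead compare the integral directly, exploiting that $\sigma_f^\tau\geqslant 1$ implies the rescaled window $h(\sigma_f^\tau(x)\cdot)$ is \emph{narrower} than $h$, so that the "worst" time $t$ effectively concentrates mass near $x=0$ where the window argument is $\sigma_f^\tau(x)x$ — this is precisely the structure recorded in $C_f$. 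A secondary, more routine, difficulty is checking the various continuity/convergence statements for the constants under $L^\infty$-perturbation of $\sigma^\tau$; these are handled by the uniform support localization ($\sigma^\tau(x)|x-t|\leqslant l$ forces the integration variable into a bounded window) and uniform continuity of $h$ on compacts.
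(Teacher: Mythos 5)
Your lower bound and your final density/limiting step are essentially the paper's own arguments (Proposition~\ref{prop_lower_bound_control_time_variance} and Lemma~\ref{lemma_density_result}), and they are fine. The problem is the upper bound, which is exactly the step you flag as ``the main obstacle'' and then leave unproved: the pointwise inequality $\F^{-1}(K_{\sigma})(t)\leqslant C_f$ for all $t$. Since $C_f=\F^{-1}(K_\sigma)(0)$ (set $t=0$ in~\eqref{eq_formula_inverse_fourier_kernel}), you are asserting that the function $t\mapsto\int_\R\sigma(x)|h(\sigma(x)(x-t))|^2\,dx$ attains its supremum at $t=0$. Nothing in the hypotheses forces this, and it is in fact false in general: take $h$ a continuous approximation of $\mathbf{1}_{[-1,1]}$ and $\sigma$ a smoothed version of $1+\mathbf{1}_{[9.5,10.5]}$ (so that $\sigma-1\in C^+_{p,0}$ and $\sigma\geqslant 1$). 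Then $\F^{-1}(K_\sigma)(0)\approx\int_{-1}^{1}|h|^2= 2$, whereas at $t=10$ the squeezed part $x\in[9.5,10.5]$ contributes $\approx\int_{9.5}^{10.5}2\,dx=2$ \emph{and} the unsqueezed tails $x\in[9,9.5)\cup(10.5,11]$ contribute another $\approx 1$, so $\F^{-1}(K_\sigma)(10)\approx 3>C_f$. The heuristic you offer (``the worst $t$ concentrates mass near $x=0$'') is precisely what breaks down when $\sigma$ is large away from the origin, so this route cannot be completed as written; the identity $\|M^\tau f\|^2=\int|f|^2\,\F^{-1}(K_\sigma)$ alone cannot yield the constant $C_f$ of the statement.

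The paper does not attempt this pointwise domination. Its upper bound comes from a different mechanism: Lemma~\ref{lemma_norm_control_time_kernel_L1} evaluates $\int_\R K_\sigma(u)\,du$ exactly and finds it equal to $C_f$, and Proposition~\ref{prop_norm_control_time_operator_L_1_lemma} then writes $\|M^\tau f\|^2=\int_\R\F(|f|^2)(\omega)K_\sigma(\omega)\,d\omega$ and bounds $\|\F(|f|^2)\|_{L^\infty}$ by $\||f|^2\|_{L^1}=\|f\|_{L^2}^2$, so that only the \emph{integral} of $K_\sigma$ enters, not the supremum of its inverse Fourier transform. (Note that this step trades your difficulty for sign information on $K_\sigma$.) If you wish to salvage a purely time-domain argument, what you can actually prove with your estimates is
\begin{equation*}
\|M^\tau f\|_{L^2}^2\ \leqslant\ \|f\|_{L^2}^2\ \sup_{t\in\R}\F^{-1}(K_\sigma)(t)\ \leqslant\ \|f\|_{L^2}^2\,\|\sigma\|_\infty\,\|h\|_\infty^2\,|\supp h|\ ,
\end{equation*}
using that $\sigma\geqslant1$ confines the integration in $x$ to an interval of length $|\supp h|$ around $t$ --- but this is a different, $t$-uniform constant, not the $C_f$ claimed in the theorem.
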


\begin{remark}
   This result is very similar to a continuous frame condition, except that $c_f$ and $C_f$ depend upon the analyzed function $f$ through the focus function $\sigma_f^\tau$. It is interesting to notice that the non-linearity of the transform only shows up in these constants.
\end{remark}

\subsection{Upper bound control}

In order to prove the upper bound control - which also guarantees the well-definedness in $L^2(\R)\rightarrow L^2(\R^2)$ of our operator - we will control the $L^1$ norm of the previously introduced kernel $K_{\sigma_f^\tau}$. For now, $f$ denotes a $C_c$ function.
\begin{lemma}[$L^1$ norm control]\label{lemma_norm_control_time_kernel_L1}
	Let $\sigma(t)$ be a focus function and $h$ be a window function, both as defined in Section~\ref{sse:definitions.timefocus}. Let the kernel $K_\sigma(u)$ be defined as in equation~\eqref{eq_def_time_kernel}.
	Then
	\begin{equation}
		\int_\R K_\sigma(u)du = \int_{\R} |h(\sigma(t)t)|^2 \sigma(t)dt\ .
	\end{equation}
\end{lemma}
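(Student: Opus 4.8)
The plan is to evaluate $\int_\R K_\sigma(u)\,du$ directly from the kernel formula~\eqref{eq_def_time_kernel}, reducing it step by step to the claimed expression. Starting from
\[
\int_\R K_\sigma(u)\,du=\int_\R\!\int_{\R^2}e^{-2i\pi u t}\,\bar{\hat h}(z)\,\hat h\!\left(z-\tfrac{u}{\sigma(t)}\right)dz\,dt\,du\ ,
\]
I would, for each fixed $t$, apply the change of variable $u=\sigma(t)v$ (licit since $\sigma(t)\ge 1>0$), with Jacobian $\sigma(t)$ and $e^{-2i\pi ut}=e^{-2i\pi\sigma(t)t\,v}$. This recasts the inner integral as $\sigma(t)\int_\R e^{-2i\pi\sigma(t)t\,v}\bigl(\int_\R\bar{\hat h}(z)\hat h(z-v)\,dz\bigr)dv$. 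The $z$-integral is an absolutely convergent cross-correlation (Cauchy–Schwarz, $\hat h\in L^2$): writing $\hat h(z-v)=\widehat{g_v}(z)$ with $g_v(x):=e^{2i\pi vx}h(x)$ and using Plancherel, $\int_\R\bar{\hat h}(z)\hat h(z-v)\,dz=\langle g_v,h\rangle_{L^2}=\int_\R e^{2i\pi vx}|h(x)|^2\,dx=\F^{-1}(|h|^2)(v)$, a bounded continuous function (here $|h|^2\in L^1$ since $h\in C_c$).

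Substituting this back, the remaining $v$-integral is $\int_\R e^{-2i\pi(\sigma(t)t)v}\,\F^{-1}(|h|^2)(v)\,dv=\F\bigl(\F^{-1}(|h|^2)\bigr)(\sigma(t)t)=|h(\sigma(t)t)|^2$, and reinstating the Jacobian and integrating in $t$ yields $\int_\R K_\sigma(u)\,du=\int_\R|h(\sigma(t)t)|^2\sigma(t)\,dt$, which is the claim. Note that the $t$-integrand $t\mapsto|h(\sigma(t)t)|^2\sigma(t)$ is supported in $[-l,l]$ (because $\sigma\ge1$ forces $|h(\sigma(t)t)|\ne0\Rightarrow|t|\le l$) and bounded, hence integrable, so the right-hand side is indeed finite. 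Equivalently, since $\int_\R K_\sigma(u)\,du=\F^{-1}(K_\sigma)(0)$, the identity is just the special case $t=0$ of the explicit formula~\eqref{eq_formula_inverse_fourier_kernel}; I would mention this as a cross-check.

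The main obstacle is not the algebra but the rigorous justification of the rearrangement of iterated integrals: $\hat h$ and $\F^{-1}(|h|^2)$ are bounded and continuous but need not be $L^1$, so neither the triple integral above nor the $v$-integral is absolutely convergent and Fubini does not apply verbatim. I would handle this exactly as in the derivation of~\eqref{eq_formula_inverse_fourier_kernel}: insert Gaussian regularizing factors $e^{-\eps t^2}e^{-\eps u^2}$, for which the full triple integral is absolutely convergent (the $z$-integral is bounded by $\|\hat h\|_{L^2}^2$ uniformly, and the Gaussians make the $t$- and $u$-integrations finite) so that Fubini and the Fourier identities above are legitimate; carry out the same computation, which now produces $\int_\R|h(\sigma(t)t)|^2\sigma(t)e^{-\eps'(t)}\,dt$ for an explicit vanishing perturbation; and finally let $\eps\to0$ using dominated convergence, the dominating function being $\|h\|_\infty^2\,\|\sigma\|_\infty$ times the indicator of $[-l,l]$. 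Since~\eqref{eq_formula_inverse_fourier_kernel} is already at our disposal, in the final write-up I would simply invoke it and present the direct computation only as motivation.
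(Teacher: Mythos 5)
Your proposal is correct and follows essentially the same route as the paper: a change of variable with Jacobian $\sigma(t)$ that removes $\sigma(t)$ from the argument of $\hat h$, followed by Fourier inversion to produce $|h(\sigma(t)t)|^2$ (the paper performs the $u$- and $z$-integrations in the opposite order, obtaining $h(\sigma(t)t)$ and its conjugate separately rather than through the autocorrelation $\F^{-1}(|h|^2)(v)$, but this is cosmetic). Your extra regularization step to justify interchanging the non-absolutely-convergent iterated integrals addresses a point the paper dispatches with a bare appeal to Fubini's theorem, so your write-up is, if anything, more careful than the original.
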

\begin{proof}
	We have by Fubini's Theorem and setting $u' = z - \tfrac{u}{\sigma(t)}$,
	\begin{align*}
		\int_\R K_\sigma(u)du & = \int_{\R^2} \bar{\hat{h}}(z)\int_{\R} e^{-2i\pi ut}\hat{h}(z-\tfrac{u}{\sigma(t)}) du dzdt \\
		& = \int_{\R^2} \bar{\hat{h}}(z) \int_{\R} e^{-2i\pi \sigma(t)t(z-u)}\hat{h}(u) du\sigma(t)dtdz \\
		& = \int_{\R^2} \bar{\hat{h}}(z)e^{-2i\pi \sigma(t)tz}\int_{\R}\hat{h}(u)e^{2i\pi \sigma(t)tu}dudz\sigma(t)dt\\
		& = \int_{\R^2} \bar{\hat{h}}(z)e^{-2i\pi \sigma(t)tz}h(\sigma(t)t) dz\sigma(t)dt \\
		& = \int_{\R} |h(\sigma(t)t)|^2\sigma(t)dt
	\end{align*}
\end{proof}

From Lemma~\ref{lemma_norm_control_time_kernel_L1} we obtain the upper norm control by introducing a weighed window $H_f$ which lightens a bit the notations.
\begin{prop}[Upper bound]\label{prop_norm_control_time_operator_L_1_lemma}
	Let $f\in L^2(\R)$ and introduce the weighed and rescaled window
	\begin{equation}
		H_f(t) := |h(\sigma_f^\tau(t)t)|\sqrt{\sigma_f^\tau(t)}\ ,
	\end{equation}
	then $H_f\in L^2(\R)$ and
	\begin{equation}
		\|Mf\|_{L^2(\R^2)} \leqslant \|f\|_{L^2(\R)}\|H_f\|_{L^2(\R)}.
	\end{equation}
\end{prop}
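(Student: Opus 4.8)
The plan is to combine the Corollary expressing $\|M^\tau f\|_{L^2}^2 = \int_\R |f|^2(t)\,\F^{-1}(K_{\sigma_f^\tau})(t)\,dt$ with the explicit formula~\eqref{eq_formula_inverse_fourier_kernel} for $\F^{-1}(K_{\sigma_f^\tau})$, and then a change of variables to recognize $\|H_f\|_{L^2}^2$. First I would establish that $H_f\in L^2(\R)$: by Lemma~\ref{lemma_norm_control_time_kernel_L1}, $\int_\R |h(\sigma_f^\tau(t)t)|^2\sigma_f^\tau(t)\,dt = \int_\R K_{\sigma_f^\tau}(u)\,du$, and since $h$ is compactly supported and $\sigma_f^\tau \geqslant 1$, the map $t\mapsto \sigma_f^\tau(t)t$ is a proper map (its preimage of the support of $h$ is bounded, as $|\sigma_f^\tau(t)t|\geqslant |t|$), so the integrand has compact support and is bounded since $h$ is continuous and $\sigma_f^\tau$ is locally bounded; hence $\|H_f\|_{L^2}^2<\infty$. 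This also gives $C_f<\infty$ as claimed in Theorem~\ref{theo_bounding_time_marseillan_transform}.

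Next I would bound $\F^{-1}(K_{\sigma_f^\tau})(t)$ pointwise. Starting from~\eqref{eq_formula_inverse_fourier_kernel}, $\F^{-1}(K_{\sigma_f^\tau})(t) = \int_\R \sigma_f^\tau(x)|h(\sigma_f^\tau(x)(x-t))|^2\,dx$, I perform the substitution $y = x-t$ (for fixed $t$), giving $\int_\R \sigma_f^\tau(y+t)|h(\sigma_f^\tau(y+t)\,y)|^2\,dy$. The key estimate is then $\sup_{t\in\R}\F^{-1}(K_{\sigma_f^\tau})(t) \leqslant \|H_f\|_{L^2}^2$; more directly, one can bound $\F^{-1}(K_{\sigma_f^\tau})(t)\leqslant \int_\R \sigma_f^\tau(y)|h(\sigma_f^\tau(y)y)|^2\,dy$ only if a monotonicity-type argument applies, so the cleaner route is: since $\F^{-1}(K_{\sigma_f^\tau})$ is the (inverse) Fourier transform of an $L^1$ function, $\|\F^{-1}(K_{\sigma_f^\tau})\|_{L^\infty}\leqslant \|K_{\sigma_f^\tau}\|_{L^1}$, and because $K_{\sigma_f^\tau}\geqslant 0$ is not guaranteed, I instead use that $\F^{-1}(K_{\sigma_f^\tau})$ is nonnegative (being given by the manifestly nonnegative integral~\eqref{eq_formula_inverse_fourier_kernel}) so its sup is attained and bounded by $\int_\R K_{\sigma_f^\tau}(u)\,du = \|H_f\|_{L^2}^2$ via Lemma~\ref{lemma_norm_control_time_kernel_L1}. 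Then $\|M^\tau f\|_{L^2}^2 = \int_\R |f|^2(t)\,\F^{-1}(K_{\sigma_f^\tau})(t)\,dt \leqslant \|\F^{-1}(K_{\sigma_f^\tau})\|_{L^\infty}\,\|f\|_{L^2}^2 \leqslant \|H_f\|_{L^2}^2\,\|f\|_{L^2}^2$, which is the claim for $f\in C_c$; taking square roots gives the stated inequality.

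Finally, to pass from $f\in C_c$ to general $f\in L^2(\R)$, I would invoke the density Lemma~\ref{lemma_density_result}: choose $(f_n)_n$ in $C_c^\infty$ with $f_n\to f$ in $L^2$ and $\sigma_{f_n}^\tau\to\sigma_f^\tau$ in $L^\infty$ (guaranteed by hypothesis~\eqref{eq_hypothesis_cv_sigma}); then $\|M^\tau f_n\|_{L^2}\to\|M^\tau f\|_{L^2}$, while $\|M^\tau f_n\|_{L^2}\leqslant \|f_n\|_{L^2}\|H_{f_n}\|_{L^2}$ for each $n$; since $\|f_n\|_{L^2}\to\|f\|_{L^2}$ and $\|H_{f_n}\|_{L^2}\to\|H_f\|_{L^2}$ (the latter because uniform convergence $\sigma_{f_n}^\tau\to\sigma_f^\tau$ on the relevant bounded set forces convergence of the corresponding integrals — here one uses again that all integrands are supported in a fixed bounded set, by the properness observed above, and are uniformly bounded), passing to the limit yields $\|M^\tau f\|_{L^2}\leqslant\|f\|_{L^2}\|H_f\|_{L^2}$. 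The main obstacle I anticipate is the continuity of $f\mapsto\|H_f\|_{L^2}$ under $L^\infty$-perturbations of the focus function: one must check carefully that $\sigma_{f_n}^\tau(t)t$ stays in a fixed compact window where $h$ is supported, uniformly in $n$, so that dominated convergence applies; this relies on $\sigma_{f_n}^\tau\geqslant 1$ and the uniform bound $\sup_n\|\sigma_{f_n}^\tau\|_\infty<\infty$ coming from $L^\infty$-convergence.
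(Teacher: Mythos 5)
The opening of your plan (membership $H_f\in L^2$ via the compact support of $h$ and $\sigma_f^\tau\geqslant 1$) and the closing density argument are sound and parallel the paper's own strategy (which likewise combines Lemma~\ref{lemma_norm_control_time_kernel_L1} with Lemma~\ref{lemma_density_result}). The gap is in your central step. You need $\sup_{t\in\R}\F^{-1}(K_{\sigma_f^\tau})(t)\leqslant\int_\R K_{\sigma_f^\tau}(u)\,du$, and you justify it by the nonnegativity of $\F^{-1}(K_{\sigma_f^\tau})$. This is a non sequitur: since $\int_\R K_{\sigma}(u)\,du=\F^{-1}(K_{\sigma})(0)$, the inequality you assert is exactly the claim that the nonnegative function $g(t):=\int_\R\sigma(x)\,|h(\sigma(x)(x-t))|^2\,dx$ attains its supremum at $t=0$. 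Nonnegativity of $g$ only tells you that its Fourier transform $K_\sigma$ is positive-definite, i.e.\ $|K_\sigma(u)|\leqslant K_\sigma(0)$ for all $u$ --- a statement about $K_\sigma$, not about $\sup_t g(t)$. Moreover $\sup_t g(t)\leqslant g(0)$ is simply false for admissible data: take $h$ close to $\mathds{1}_{[-1,1]}$ and $\sigma$ equal to a large constant $N$ on $[10,10.5]$ and to $1$ elsewhere (an element of $C^+_{p,0}(\R)+1$); then $g(0)\approx 2$ while $g(10.25)\approx 2+1.5=3.5$. The only unconditional Fourier bound is $\|\F^{-1}(K_\sigma)\|_{\infty}\leqslant\|K_\sigma\|_{L^1}=\int_\R|K_\sigma(u)|\,du$, and this coincides with $\int_\R K_\sigma(u)\,du=\|H_f\|_{L^2}^2$ precisely when $K_\sigma\geqslant 0$ --- the hypothesis you correctly flagged as unavailable. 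So the step as written does not close, and no rearrangement of the same ingredients will close it without sign information on $K_\sigma$.

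For comparison, the paper's proof takes the dual route, writing $\|M^\tau f\|_{L^2}^2=\int_\R\F(|f|^2)(\omega)K_\sigma(\omega)\,d\omega$ and bounding this by $\|\F(|f|^2)\|_{L^\infty}\int_\R K_\sigma(\omega)\,d\omega\leqslant\|f\|_{L^2}^2\|H_f\|_{L^2}^2$; note that this estimate also silently replaces $\int|K_\sigma|$ by $\int K_\sigma$, i.e.\ it relies on the same unstated positivity of $K_\sigma$. Your instinct to worry about the sign of $K_\sigma$ therefore points at a genuine issue, but the proposed workaround via the positivity of $\F^{-1}(K_\sigma)$ does not resolve it: to complete either argument one must actually prove $K_\sigma\geqslant 0$, or else replace the claimed constant by $\big\|\F^{-1}(K_{\sigma_f^\tau})\big\|_{\infty}=\sup_t\int_\R\sigma_f^\tau(x)|h(\sigma_f^\tau(x)(x-t))|^2dx$, which is the bound your own computation naturally yields.
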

\begin{proof}
Since $h$ is compactly supported and $\sigma$ bounded and continuous by part, $H_f\in L^2$. Let us now assume $f\in C_c$. For any $f,g\in L^2$ the dual equality
	\begin{equation*}
		\int_{\R} f(t)\hat{g}(t)dt = \int_{\R} \hat{f}(\omega)g(\omega)d\omega
	\end{equation*}
	gives
	\begin{equation*}
		\|Mf\|_{L^2}^2 = \int_{\R} \F(|f|^2)(\omega)K_\sigma(\omega)d\omega\ .
	\end{equation*}
	As $f\in L^2$ we have $\F(|f|^2)\in L^\infty$, hence by using Lemma~\ref{lemma_norm_control_time_kernel_L1} for the second member
	\begin{align*}
		\int_{\R} \F(|f|^2)(\omega)K_\sigma(\omega)d\omega & \leqslant \| \F(|f|^2)\|_{L^\infty}\int_{\R} |h(\sigma(t)t)|^2\sigma(t)dt, \\
		& \leqslant \| |f|^2 \|_{L^1} \| H_f\|_{L^2}^2 \\
		& = \|f\|_{L^2}^2 \|H_f \|_{L^2}^2\ .				
	\end{align*}
	Using the definition of $H_f$ and taking its $L^2$ norm we obtain the bound in Proposition~\ref{prop_norm_control_time_operator_L_1_lemma}. Therefore, the upper bound $C_f$ in Theorem~\ref{theo_bounding_time_marseillan_transform} equals $\|H_f\|_{L^2}$.

 We will now extend the result to $L^2$. Let $f\in L^2$, by assumption~\eqref{eq_hypothesis_cv_sigma} the exists a sequence $(f_n)_n$ with $f_n\in C_c$, converging to $f$ in $L^2$ and such that the sequence $(\sigma_{f_n})_n$ converges to $\sigma_f$ in $L^\infty$. We know by Lemma~\ref{lemma_density_result} that 
 \begin{equation}
     \left|\|M^\tau f\|_{L^2} - \|M^\tau f_n \|_{L^2} \right| \rightarrow 0
 \end{equation}
 However we know by the upper bound control that
 \begin{equation}
     \|M^\tau f_n\|_{L^2} \leqslant \|H_{f_n}\|_{L^2}\|f_n\|_{L^2} \leqslant \sup_n \{\|H_{f_n}\|_{L^2}\}\|f_n\|_{L^2}
 \end{equation}
 Since $\sigma_{f_n}$ converges in $L^\infty$ - by assumption~\eqref{eq_hypothesis_cv_sigma} - we know that $\sup_n \{\|H_{f_n}\|_{L^2}\}< \infty$. Hence, $(\|M^\tau f_n\|)_n$ is bounded, and Lemma~\ref{lemma_density_result} guarantees that the $L^2$ norm of $M^\tau f$ for $f\in L^2$ is finite and controlled by $\|f\|_{L^2}C_f=\|f\|_{L^2}\|H_f\|_{L^2}$.
 
\end{proof}

The following norm control is not necessary at this point but may be useful in future work, so it is presented here.
\begin{prop}[$L^2$ norm control]
	Let $\sigma(t)$ be a focus function and $h$ be a window function, both as defined in Section~\ref{sse:definitions.timefocus}. Let the kernel $K_\sigma(u)$ be defined as in equation~\eqref{eq_def_time_kernel}. Thus we have
	\begin{equation}
		\|K_{\sigma_f^\tau} \|_{L^2(\R)}^2 = \|h\|_{L^2(\R)}^2 \int_{\R} |h(\sigma_f^\tau(t)t)|^2\sigma_f^\tau(t)^2 dt.
	\end{equation}
\end{prop}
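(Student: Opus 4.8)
The plan is to compute $\|K_{\sigma_f^\tau}\|_{L^2}^2$ directly from the defining formula~\eqref{eq_def_time_kernel} by the same Fubini-type manipulation used in Lemma~\ref{lemma_norm_control_time_kernel_L1}, but now with a product of two copies of the kernel (one conjugated). Writing $\sigma$ for $\sigma_f^\tau$, I would start from
\[
\|K_\sigma\|_{L^2}^2 = \int_\R K_\sigma(u)\overline{K_\sigma(u)}\,du
= \int_\R \left(\int_{\R^2} e^{-2i\pi u t}\bar{\hat h}(z)\hat h\!\left(z-\tfrac{u}{\sigma(t)}\right)dz\,dt\right)\left(\int_{\R^2} e^{2i\pi u s}\hat{\bar h}(w)\bar{\hat h}\!\left(w-\tfrac{u}{\sigma(s)}\right)dw\,ds\right)du \ ,
\]
so that after interchanging integrals the $u$-integral produces a factor $e^{-2i\pi u(t-s)}$ times the $z$- and $w$-integrals. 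The key move, exactly as in Lemma~\ref{lemma_norm_control_time_kernel_L1}, is the substitution that turns $z\mapsto z-\tfrac{u}{\sigma(t)}$ into an $\hat h$ of a fixed variable, so that the $u$-integral becomes (after a change of variable scaling $u$ by $\sigma$) a Fourier integral evaluating $\hat h$ back at $h(\sigma(t)t)$, and similarly for the $s$-branch; the delta-like contribution forces $s=t$ up to the Jacobian, collapsing the double $t,s$ integral to a single one. I expect the arithmetic of keeping track of the two Jacobian factors $\sigma(t)$ (one from each kernel copy) to be the part demanding care — this is where the $\sigma_f^\tau(t)^2$ in the statement comes from, rather than the single power appearing in Lemma~\ref{lemma_norm_control_time_kernel_L1}.

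Concretely, I would first use the corollary-style identity $\F^{-1}(K_\sigma)(t) = \int_\R \sigma(x)|h(\sigma(x)(x-t))|^2\,dx$ from~\eqref{eq_formula_inverse_fourier_kernel} together with Plancherel: $\|K_\sigma\|_{L^2}^2 = \|\F^{-1}(K_\sigma)\|_{L^2}^2$. This route may actually be the cleanest, since $\F^{-1}(K_\sigma)$ is already an explicit nonnegative function. Then
\[
\|K_\sigma\|_{L^2}^2 = \int_\R \left(\int_\R \sigma(x)|h(\sigma(x)(x-t))|^2\,dx\right)^2 dt
= \int_{\R^3} \sigma(x)\sigma(y)\,|h(\sigma(x)(x-t))|^2\,|h(\sigma(y)(y-t))|^2\,dx\,dy\,dt \ .
\]
I would integrate in $t$ first. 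The obstacle here is that the two window factors are centered at $x$ and $y$ with \emph{different} scales $\sigma(x),\sigma(y)$, so the $t$-integral $\int_\R |h(\sigma(x)(x-t))|^2|h(\sigma(y)(y-t))|^2\,dt$ does not obviously collapse — unless one recognizes that upon substituting back into Fourier variables the cross terms vanish. This suggests that the Plancherel-on-$\F^{-1}(K_\sigma)$ approach is in fact a dead end for getting the clean stated formula, and that the genuine computation must be the frequency-side one sketched in the first paragraph, where the oscillatory $u$-integral is what enforces the diagonal $s=t$.

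So my actual plan is: expand $\|K_\sigma\|_{L^2}^2=\int K_\sigma\bar K_\sigma$ in the frequency representation, apply Fubini to do the $u$-integral first; in each kernel copy perform the change of variables $u' = z - u/\sigma(t)$ (resp. $w' = w - u/\sigma(s)$) as in Lemma~\ref{lemma_norm_control_time_kernel_L1} so that each copy contributes $e^{\mp 2i\pi \sigma(\cdot)(\cdot) z}h(\sigma(\cdot)(\cdot))$ after recognizing an inverse Fourier transform of $\hat h$, picking up a Jacobian $\sigma(t)$ and $\sigma(s)$ respectively; then the remaining $z,w$ and $t,s$ integrals, after using $\int_\R \bar{\hat h}(z)\hat h(z)\,dz=\|h\|_{L^2}^2$ type identities and the Fourier orthogonality that forces $\sigma(t)t = \sigma(s)s$, i.e. $s=t$, reduce everything to $\|h\|_{L^2}^2\int_\R |h(\sigma(t)t)|^2\sigma(t)^2\,dt$. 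The main obstacle is bookkeeping: making sure the two scale substitutions are done consistently, that the $u$-integral is genuinely handled before the $t,s$ integrals (which requires a Fubini justification using compact support of $h$ and hence of $\hat h$ being Schwartz, and the $L^1$ control of $K_\sigma$ from Lemma~\ref{lemma_norm_control_time_kernel_L1}), and that the Jacobian factors land as $\sigma(t)^2$ and not $\sigma(t)$.
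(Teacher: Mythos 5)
Your first route is not a dead end --- it is the decisive computation, and you abandoned it one step too early. By Plancherel, $\|K_\sigma\|_{L^2}^2=\|\F^{-1}(K_\sigma)\|_{L^2}^2$, and \eqref{eq_formula_inverse_fourier_kernel} gives $\F^{-1}(K_\sigma)$ explicitly as a nonnegative function; your expansion
\[
\|K_\sigma\|_{L^2}^2=\int_{\R^3}\sigma(x)\sigma(y)\,|h(\sigma(x)(x-t))|^2\,|h(\sigma(y)(y-t))|^2\,dx\,dy\,dt
\]
is exact. Now observe (as Proposition~\ref{prop_lower_bound_control_time_variance} itself proves) that $\F^{-1}(K_\sigma)(t)\geqslant c>0$ for all $t$, since $\sigma\to 1$ at infinity and $h\not\equiv 0$; a function bounded below by a positive constant on all of $\R$ is not square integrable, so this triple integral diverges. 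No correct manipulation of $\int K_\sigma\overline{K_\sigma}$ can therefore produce the finite right-hand side of the statement; the fact that your honest computation ``does not collapse'' is the content, not an obstacle to be routed around.

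The gap in your second route is precisely the step you flagged as ``Fourier orthogonality that forces $\sigma(t)t=\sigma(s)s$, i.e.\ $s=t$''. There is no such mechanism. Expanding $\int|K_\sigma(u)|^2du$ from \eqref{eq_def_time_kernel} honestly gives five integration variables: $u$ once, and $(t,z)$ from one copy, $(s,w)$ from the conjugate copy. Doing the $z$ and $w$ integrals first leaves $\int_\R e^{-2i\pi u(t-s)}$ integrated against bounded, $u$-dependent factors (Fourier transforms of $|h|^2$ evaluated at $u/\sigma(t)$ and $u/\sigma(s)$); the result is an ordinary function of $(t,s)$ with no delta singularity on the diagonal, and carried out correctly this route merely reproduces the divergent triple integral above. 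The paper's proof reaches the stated finite answer only because its opening display writes the squared norm as a four-fold integral with \emph{two} frequency variables $u,u'$ but a \emph{single} shared $t$ and a single shared $z$ --- that is, the two copies of the kernel are paired already restricted to the diagonal in $(t,z)$. That identity is not $\int K_\sigma(u)\overline{K_\sigma(u)}\,du$, and it is exactly there that the $\sigma(t)^2$ Jacobian you were tracking, and the finiteness, enter. Your bookkeeping instincts were right; the diagonal collapse is the step that fails, and your Plancherel computation is the cleanest way to see it.
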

\begin{proof}
	Using Fubini's Theorem we obtain
	\begin{equation*}
		\|K_\sigma\|_{L^2}^2 = \int_{\R^4} e^{-2i\pi(u-u')t}|\hat{h}(z)|^2\hat{h}(z-\tfrac{u}{\sigma(t)})\bar{\hat{h}}(z-\tfrac{u'}{\sigma(t)})dudu'dtdz
	\end{equation*}
	We set then $x:=\tfrac{u}{\sigma(t)}$ and $x'=\tfrac{u'}{\sigma(t)}$ which gives
	\begin{align*}
		\|K_\sigma\|_{L^2}^2  &= \int_{\R^2} \underset{(1)}{\underbrace{\left( \int_{\R} e^{-2i\pi x\sigma(t)t}\hat{h}(z-x)dx \right)}}\underset{(2)}{\underbrace{\left( \int_{\R} e^{2i\pi x'\sigma(t)t}\bar{\hat{h}}(z-x')dx' \right)}}|\hat{h}(z)|^2 \sigma^2(t)dtdz.
	\end{align*}
	We set $x_2=z-x$ and hence we recognize the inverse Fourier transform formula
	\begin{align*}
		(1) &= \int_{\R} e^{-2i\pi(z-x_2)\sigma(t)t}\hat{h}(x_2)dx_2 \\
		& = e^{-2i\pi z\sigma(t)t}h(\sigma(t)t).
	\end{align*}
	Furthermore $(2) = \overline{(1)} $. Thus, when we gather all the members we obtain by Fourier isometry
	\begin{align*}
		\|K_\sigma\|_{L^2}^2 &= \int_{\R^2} |h(\sigma(t)t)|^2 |\hat{h}(z)|^2 dz \sigma(t)^2dt \\
		& = \|h\|_{L^2}^2 \int_{\R} |h(\sigma(t)t)|^2\sigma(t)^2 dt.
	\end{align*}
\end{proof}

\subsection{Lower bound control}

The following proposition gives a strictly positive lower bound for the norm, in the case of non zero signals.
\begin{prop}[Lower bound]\label{prop_lower_bound_control_time_variance}
	Let $\sigma(t)$ be a focus function and $h$ be a compactly supported window function, both as defined in Section~\ref{sse:definitions.timefocus}. There exists $c=c(\sigma)>0$ which depends on $\sigma$ such that
	\begin{equation}\label{eq_lower_bound_kernel}
		\forall t\in\R, \ \F^{-1}(K_\sigma)(t) > c\ ,
	\end{equation}
	with
    \begin{equation}
		c:= \inf_{t\in\R}\left\{ \int_\R|h(x\sigma(x+t))|^2 dx \right\} .
	\end{equation}
\end{prop}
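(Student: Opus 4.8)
The plan is to reduce the estimate to the explicit formula~\eqref{eq_formula_inverse_fourier_kernel} and then to a uniform positivity bound. First I would start from $\F^{-1}(K_\sigma)(t)=\int_\R\sigma(x)\,|h(\sigma(x)(x-t))|^2\,dx$ and perform the change of variable $x=y+t$, which gives $\F^{-1}(K_\sigma)(t)=\int_\R\sigma(y+t)\,|h(\sigma(y+t)y)|^2\,dy$. Dropping the factor $\sigma(y+t)\ge 1$ inside the integral yields $\F^{-1}(K_\sigma)(t)\ge\phi(t)$, where $\phi(t):=\int_\R|h(\sigma(y+t)y)|^2\,dy$. Since by definition $c=\inf_{t\in\R}\phi(t)$, this already gives $\F^{-1}(K_\sigma)(t)\ge\phi(t)\ge c$ for every $t$ (with strict inequality whenever $\sigma$ is not identically $1$ near $t$, so that $\int_\R(\sigma(y+t)-1)|h(\sigma(y+t)y)|^2\,dy>0$; only $c>0$ is needed for Theorem~\ref{theo_bounding_time_marseillan_transform}). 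Hence the whole matter is to prove $c>0$, i.e. that $\phi$ is bounded away from $0$.

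I would then establish that $\phi$ is continuous, has a positive limit at $\pm\infty$, and is positive at every point. For continuity: writing $\supp h\subseteq[-L,L]$, the bound $\sigma\ge1$ forces $h(\sigma(y+t)y)=0$ for $|y|>L$, so $\phi(t)=\int_{-L}^{L}|h(\sigma(y+t)y)|^2\,dy$ with integrand dominated, uniformly in $t$, by $\|h\|_\infty^2\,\mathbf 1_{[-L,L]}$; and since the discontinuity set of the piecewise continuous $\sigma$ is Lebesgue-null, for a.e.\ $y$ the map $t'\mapsto\sigma(y+t')y$ is continuous at $t$, so by continuity of $h$ dominated convergence gives $\phi\in C(\R)$. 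For the behaviour at infinity: as $\sigma-1$ vanishes at $\pm\infty$, $\sup_{|y|\le L}|\sigma(y+t)-1|\to 0$ as $|t|\to\infty$, hence $\sigma(y+t)y\to y$ uniformly on $[-L,L]$, and by uniform continuity of $h$ one gets $\phi(t)\to\|h\|_{L^2}^2>0$.

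For pointwise positivity I would use that $h(0)\ne0$ — which holds for the windows considered (Hann, Blackman, …), and more generally it suffices that $h$ not vanish on a neighbourhood of $0$: then $h\ne0$ on some $(-\rho,\rho)$, and since $|\sigma(y+t)y|\le\|\sigma\|_\infty|y|\to0$ as $y\to0$, for $|y|$ small enough $\sigma(y+t)y\in(-\rho,\rho)$, so $|h(\sigma(y+t)y)|^2>0$ on a neighbourhood of $0$ and $\phi(t)>0$. Finally, choosing $T$ with $\phi(t)\ge\tfrac12\|h\|_{L^2}^2$ for $|t|\ge T$, the continuous everywhere-positive $\phi$ attains on the compact $[-T,T]$ a positive minimum $m_T$, so $c=\inf_\R\phi\ge\min\{m_T,\tfrac12\|h\|_{L^2}^2\}>0$; combined with the first step this proves the proposition.

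The hard part is the pointwise positivity $\phi(t)>0$: it is genuinely delicate and can fail if $h$ vanishes near $0$ (one can engineer $\sigma\in C^+_{p,0}(\R)+1$ so that, for a given $t$, $\sigma(y+t)y$ stays off $\supp h$ for all $y$), so the argument essentially relies on the window not vanishing at its centre. A secondary subtlety is the continuity of $\phi$, which has to accommodate the mere piecewise continuity of $\sigma$; this is handled through the null discontinuity set together with the uniform dominating function $\|h\|_\infty^2\mathbf 1_{[-L,L]}$.
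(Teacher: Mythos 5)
Your proof follows essentially the same route as the paper's: bound $\F^{-1}(K_\sigma)(t)$ from below by $\phi(t)=\int_\R|h(\sigma(y+t)y)|^2\,dy$ using $\sigma\ge 1$, then show $\phi$ is continuous, everywhere positive, and has positive limits at $\pm\infty$, hence is bounded away from zero. The one place you go beyond the paper is the pointwise positivity $\phi(t)>0$, which the paper dismisses as obvious but which, as you correctly observe, genuinely requires that $h$ not vanish in a neighbourhood of the origin (true for the windows the paper has in mind, such as Hann or Blackman, though not among its stated assumptions).
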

\begin{proof}
Let $t\in\R$. Since $\sigma(x)\geqslant1$ for any $x\in\R$, by using equation~\eqref{eq_formula_inverse_fourier_kernel} we have
\begin{align*}
    \mathcal{F}^{-1}(K_\sigma)(t) & \geqslant \int_\R |h(\sigma(x)(x-t))|^2 dx
\end{align*}
Hence we have
\begin{equation}
    \forall t\in\R, \ \mathcal{F}^{-1}(K_\sigma)(t)\geqslant H(t).
\end{equation}
Now let us prove that $\inf_t H(t) =: c >0$. We obviously have $H(t)>0$ for any $t\in\R$ furthermore by the use of the dominated convergence theorem we also have $\lim_{\pm\infty}H(t)>0$. And since $H(t)$ is continuous (again by the dominated convergence theorem) we can conclude that there exists $c=c(\sigma)>0$ such that
\begin{equation}
    \inf_{t\in\R}H(t)=c>0\ .
\end{equation}
\end{proof}

\begin{remark}
Since $h$ is supposed continuous (and nonzero), a lower bound independent of $\sigma$ can be obtained for $c$. Without loss of generality, assume that $|h|$ attains its maximum value $\|h\|_\infty$ at the origin. Then there exists $a>0$ such that for every $y\in (-a,a)$, $|h(y)|>  \|h\|_\infty/\sqrt{2}$. Since $\sigma(x)\geqslant 1$ for all $x$, we have $x\in (t-a/\sigma(x),t+a/\sigma(x))$ for every $x\in (t-a,t+a)$ so that $(x-t)\sigma(x)\in (-a,a)$. Therefore, we may write
\[
\int_\R |h(x\sigma(x+t))|^2dx = \int_\R |h((x-t)\sigma(x)|^2dx\ge \int_{(t-a,t+a)}|h((x-t)\sigma(x)|^2dx> a \|h\|_\infty^2\ ,
\]
which doesn't depend on $\sigma$, then on $f$ if $\sigma=\sigma_f^\tau$.

Note that the compact support assumption is not necessary for that lower bound.
\end{remark}

\section{Time frequency transform with frequency focus}\label{sec_frequency_focus}
\label{sec:freqfocus}
It is also interesting to introduce frequency-dependent focus, in addition to time-dependent focus. We first stress that the construction of Section~\ref{sec_time_focus} may easily be transposed to that context. Indeed, given the symmetry property of the STFT provided by the Plancherel formula $\langle f,h_{t,\omega}\rangle = \langle \hat f,\widehat{h_{t,\omega}}\rangle = \langle\hat f,\hat h_{\omega,-t}\rangle$, "time-focus" may be applied to the STFT of the Fourier transform $\hat f$ of a signal $f$, resulting in frequency focus. Results similar to the ones described above can be obtained using the very same techniques, we won't address this adaptation here.

We shall rather address the introduction of frequency focus into another transform, which uses scale variables in place of frequency variables, namely wavelet and/or constant-Q transforms. These closely related transforms are based upon time-frequency atoms which have the constant-Q property. The Q factor is usually defined as the ratio of the central frequency $\xi$ of the atom by its spectral bandwidth $\delta\xi$ (both quantities will be properly defined below).

\subsection{Continuous constant-Q and wavelet transforms}
\subsubsection{Transforms on $L^2(\R)$}
The constant $Q$ transform was introduced in a discrete context~\cite{Brown1991calculation} and revisited more recently~\cite{velasco2011constructing,holighaus2012framework}. We provide below a slightly more general version adapted to the continuous setting.

The time-frequency atoms are built from a reference waveform $h\in L^2(\R)$, which will be assumed continuous and compactly supported in the Fourier domain.
Following the definition from~\cite{holighaus2012framework,velasco2011constructing}, time-frequency atoms $h_{t,\u}$ are generated as rescaled and shifted copies of $h$, which is implemented in the continuous setting as
\begin{equation}
	\forall x,t,\u\in\R\ ,\quad \ h_{t,u}(x) := \sqrt{\gamma(u)}e^{2i\pi\gamma(u)x}h(\gamma(u)x-t)\ .
\end{equation}
Here, $\gamma$ is a $C^1$ diffeomorphism such that $\lim_{u\to-\infty}\gamma(u)=0$ and $\lim_{u\to+\infty}\gamma(u)=+\infty$. In~\cite{holighaus2012framework,velasco2011constructing}, $\gamma$ was given an exponential form, we consider here a slightly more general such scale function. Time-frequency atoms $h_{t,u}$ are normalized so that $\|h_{t,u}\|_{L^2}=\|h\|_{L^2}$ for all $t,u$. The corresponding constant-Q transform maps every $f\in L^2(\R)$ to the function $Lf$ defined by
\begin{equation}
L_f(t,u)=\langle f,h_{t,u}\rangle_{L^2}\ .
\end{equation}
Since $f,h\in L^2(\R)$, $L_f(t,u)$ is well-defined for all $t,u\in\R$. Under suitable assumptions on $h$, $L$ also establishes an isometry between $L^2(\R)$ and $L^2(\R^2,d\mu)$, where the measure $\mu$ is defined by
\begin{equation}
\label{fo:CQT.measure}
d\mu(t,\u) := \tfrac{\gamma'(\u)}{\gamma(\u)}d\u dt \ .
\end{equation}
\begin{prop}\label{prop_norm_relation_operator_Q_transform}
Let $\gamma$ be a $C^1$ diffeomorphism such that $\lim_{\u\to -\infty}\gamma(\u)=0$ and $\lim_{\u\to +\infty}\gamma(\u)=+\infty$, let $h\in L^2(\R)$ satisfying the admissibility condition
\begin{equation}
\label{fo:CQT.admissibility}
    0<c_h := \int_{-1}^{+\infty}\frac{|\hat{h}(y)|^2}{y+1}dy = \int_{-\infty}^{-1}\frac{|\hat{h}(y)|^2}{-y-1}dy<\infty\ .
\end{equation}
Then for any $f\in L^2(\R)$ we have
\begin{equation}
\label{fo:CQT_isometry}
    \|Lf\|^2_{L^2(\R^2,d\mu)} = c_h \|f\|_{L^2(\R)}^2\ .
\end{equation}
\end{prop}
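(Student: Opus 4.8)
The plan is to transport everything to the Fourier side, recognize the integration over the time variable as an instance of Plancherel's identity, and then perform two successive changes of variables that convert the measure $d\mu$ into the admissibility integral. It is convenient to first establish the identity for $f$ in the dense subspace of $L^2(\R)$ consisting of functions whose Fourier transform is bounded with compact support, so that every integral written below is absolutely convergent and Tonelli/Plancherel apply without fuss; the general case is recovered at the end. The first computation is the Fourier transform of the atom: a substitution $y=\gamma(u)x-t$ gives
\[
\widehat{h_{t,u}}(\xi) = \frac{1}{\sqrt{\gamma(u)}}\,e^{-2i\pi t(\xi/\gamma(u)-1)}\,\hat h\!\left(\frac{\xi}{\gamma(u)}-1\right).
\]

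Next, by Plancherel, $L_f(t,u)=\langle f,h_{t,u}\rangle_{L^2}=\langle\hat f,\widehat{h_{t,u}}\rangle_{L^2}$; substituting $\eta=\xi/\gamma(u)-1$ inside this integral yields
\[
L_f(t,u) = \sqrt{\gamma(u)}\int_\R \hat f\big(\gamma(u)(\eta+1)\big)\,\overline{\hat h(\eta)}\,e^{2i\pi t\eta}\,d\eta,
\]
so that, for fixed $u$, $L_f(\cdot,u)$ is $\sqrt{\gamma(u)}$ times the inverse Fourier transform in the $t$ variable of $\eta\mapsto\hat f(\gamma(u)(\eta+1))\overline{\hat h(\eta)}$; for $f$ in the chosen dense class this function is in $L^1\cap L^2(\R)$. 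Applying Plancherel in $t$ then gives
\[
\int_\R |L_f(t,u)|^2\,dt = \gamma(u)\int_\R \big|\hat f(\gamma(u)(\eta+1))\big|^2\,|\hat h(\eta)|^2\,d\eta .
\]

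Now multiply by $\gamma'(u)/\gamma(u)$ and integrate in $u$: the $\gamma(u)$ factors cancel, and since the integrand is nonnegative, Tonelli lets us write
\[
\|Lf\|_{L^2(\R^2,d\mu)}^2 = \int_\R |\hat h(\eta)|^2\left(\int_\R \big|\hat f(\gamma(u)(\eta+1))\big|^2\,\gamma'(u)\,du\right)d\eta .
\]
Because $\gamma$ is an increasing $C^1$ diffeomorphism of $\R$ onto $(0,\infty)$, the substitution $v=\gamma(u)$ turns the inner integral into $\int_0^\infty |\hat f(v(\eta+1))|^2\,dv$. Splitting the $\eta$-integral at $\eta=-1$ (a null set) and, on each half, substituting $w=v(\eta+1)$, the inner integral becomes $\frac{1}{\eta+1}\int_0^\infty|\hat f(w)|^2dw$ when $\eta>-1$ and $\frac{1}{-\eta-1}\int_{-\infty}^0|\hat f(w)|^2dw$ when $\eta<-1$. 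By the admissibility condition \eqref{fo:CQT.admissibility}, $\int_{-1}^{+\infty}|\hat h(\eta)|^2/(\eta+1)\,d\eta$ and $\int_{-\infty}^{-1}|\hat h(\eta)|^2/(-\eta-1)\,d\eta$ both equal $c_h$, so the two halves reassemble into $c_h\int_\R|\hat f(w)|^2\,dw = c_h\|f\|_{L^2}^2$ by Plancherel. Finally, extend to all of $L^2(\R)$: the identity just proved makes $f\mapsto Lf$ an isometry on a dense subspace, hence it extends to a continuous (isometric) map on $L^2(\R)$, and since $L_f(t,u)=\langle f,h_{t,u}\rangle$ is pointwise continuous in $f$ (uniformly in $t,u$, being bounded by $\|f\|\,\|h\|$), the continuous extension agrees with $L$ itself, which gives \eqref{fo:CQT_isometry} for every $f\in L^2(\R)$.

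The obstacle here is bookkeeping rather than conceptual: one must carefully track the sign of $\eta+1$ through the change of variables $w=v(\eta+1)$ — this is precisely why the admissibility hypothesis is phrased as two separate (and equal) integrals, one over $(-1,\infty)$ and one over $(-\infty,-1)$ — and one must be able to invoke Plancherel in $t$ and Tonelli legitimately, which is the reason for first restricting to $f$ with $\hat f$ bounded and compactly supported before passing to the limit.
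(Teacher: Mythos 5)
Your proof is correct and follows essentially the same route as the paper's: pass to the Fourier side, apply Plancherel in the $t$ variable to reduce $\int|L_f(t,u)|^2\,dt$ to a frequency-domain integral, and then change variables so that the $d\mu$ integration produces exactly the two halves of the admissibility integral, paired with the positive and negative frequency content of $f$. The only (harmless) differences are bookkeeping choices: you use the dense class $\{\hat f$ bounded, compactly supported$\}$ with Tonelli and a clean isometry-extension argument, where the paper uses $f\in C_c(\R)$, a truncation $I_\varepsilon$ with dominated convergence, and Fatou for the extension.
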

\begin{proof}[Sketch of the proof]
Let $h\in L^2(\R)$ satisfying the admissibility condition~\eqref{fo:CQT.admissibility}. Assume $f\in C_c(\R)$. Then $f\in L^1(\R)$, and by Young's convolution inequality $Lf(\cdot,u)\in L^2(\R)$ for all $u\in\R$. Introducing the auxiliary function $F(\xi,u)=\hat f(\xi) \overline{\hat h}(\xi/\gamma(u),u)$, we have
\[
\int_{\R}\! |Lf(t,u)|^2 dt = \int_{\R^3}\!\! F(\xi,u)\overline{F}(\xi',u)e^{2i\pi (\xi-\xi')t/\gamma(u)}\,d\xi d\xi' dt=\int_{\R}\! \left|\widecheck{F}_1(t,u)\right|^2 dt = \left\|F(\cdot,u)\right\|^2_{L^2}\ ,
\]
where we have used twice Plancherel's formula, and denoted by $\widecheck{F}_1=\mathcal{F}_1^{-1}F$ the inverse Fourier transform of $F$ with respect to its first variable.

Let $\varepsilon>0$, $I_\varepsilon = [\gamma^{-1}(1/\varepsilon),\gamma^{-1}(\varepsilon)]$. Focusing on positive frequencies first,  consider the (convergent) integral
\begin{eqnarray*}
\int_{I_\varepsilon} \|F(\cdot,u)\|_{L^2(\R_+)}^2 \frac{\gamma'(u)}{\gamma(u} du &=& \int_{I_\varepsilon\times\R_+} |\hat f(\xi)|^2\left|\hat\psi\left(\frac{\xi}{\gamma(u)}-1\right)\right|^2d\xi\frac{\gamma'(u)}{\gamma(u)}du\\
&=& \int_{\R_+} |\hat f(\xi)|^2\int_{-1+\xi\varepsilon}^{-1+\xi/\varepsilon} |\hat h(y)|^2\frac{dy}{y+1}\ .
\end{eqnarray*}
The inner integral is bounded by the admissibility constant $c_h$, the dominated convergence theorem then yields
\[
\int_\R \|F(\cdot,u)\|_{L^2(\R_+)}^2 \frac{\gamma'(u)}{\gamma(u)} du =c_h \|f\|_{L^2(\R_+)}^2\ .
\]
Similar arguments give, for the negative frequency part,
\[
\int_\R \|F(\cdot,u)\|_{L^2(\R_-)}^2 \frac{\gamma'(u)}{\gamma(u)} du =c_h \|f\|_{L^2(\R_-)}^2\ ,
\]
and putting both results together gives Equation~\eqref{fo:CQT_isometry}. Finally, Fatou's lemma gives the extension from $f\in C_c(\R)$ to $f\in L^2(\R)$.
\end{proof}
This result bears strong resemblance with known results on continuous wavelet transform~\cite{Grossmann1984decomposition}, in particular the admissibility condition. Notice however that the latter expresses a symmetry condition in the frequency domain with respect to frequency $\xi=-1$, while the corresponding wavelet admissibility condition expresses a similar symmetry with respect to the origin of frequencies. As a consequence, $h$ is necessarily complex-valued.

A closer connection can be made by introducing a function $\psi$ defined by
\begin{equation}
\label{fo:hr}
    \forall x\in\R,\quad \ \psi(x) = h(x)e^{2i\pi x}\ .
\end{equation}
where $\psi$ can be chosen real valued. Thus, the admissibility condition~\eqref{fo:CQT.admissibility} becomes
\begin{equation}
    0<c_\psi := \int_{\R_+}\frac{|\hat{\psi}(y)|^2}{y}dy = \int_{\R_-}\frac{|\hat{\psi}(y)|^2}{-y}dy<\infty\ ,
    \label{fo:CWT.admissibility}
\end{equation}
which is the usual admissibility condition for continuous wavelet transform~\cite{Grossmann1985transforms,Grossmann1986transforms}. We remind that the latter insures invertibility, a left inverse wavelet transform being given by the adjoint operator (up to the constant factor $c_\psi^{-1}$). 
The time-frequency atoms can then be written in terms of $\psi$ as
\begin{equation}
	\forall x,t,\u\in\R\ ,\quad \ h_{t,u}(x) := \sqrt{\gamma(u)}e^{2i\pi t}\psi(\gamma(u)x-t)\ ,
\end{equation}
which are closely related to wavelets as defined in~\cite{Grossmann1984decomposition}, with two mild modifications, namely the scale which is labeled by $\gamma(u)$, and a phase factor. These two changes do not modify strongly the classical wavelet transform.

\subsubsection{Transforms on $H^2(\R)$}
The constant-Q and wavelet transforms defined above turn out to be unsuitable for the construction we are about to describe. We found it more convenient to limit to functions whose Fourier transform vanishs for negative frequency. As in~\cite{Grossmann1984decomposition}, we introduce the real Hardy space
\begin{equation}
\label{es_real_hardy_space}
H^2(\R) = \left\{ f\in L^2(\R),\quad \hat f(\xi)=0\ \forall\xi\le 0\right\}\ .
\end{equation}
Let $\psi\in H^2(\R)$. Such a function $\psi$ is called analytic (or progressive) wavelet. The corresponding continuous wavelet transform~\cite{Grossmann1984decomposition} of a signal $f\in H^2(\R)$ is defined by
\begin{equation}
Wf(t,u) = \langle f,\psi_{t,u}\rangle = \frac{1}{\sqrt{\gamma(u)}}\,\int_{\R_+} \hat f(\xi)\overline{\hat\psi}\left(\frac{\xi}{\gamma(u)}\right) e^{2i\pi \xi t}\,d\xi\ ,\qquad u,t\in\R\ .
\end{equation} 
If the admissibility condition below is satisfied
\begin{equation}
\label{fo:CWT.admissibility.H2}
0<c_\psi := \int_{\R_+}\frac{|\hat{\psi}(y)|^2}{y}dy <\infty\ ,
\end{equation}
the corresponding transform satisfies the following isometry property
\begin{equation}
\left\|W f\right\|_{L^2(\R^2,d\mu)}^2 = c_\psi \|f\|^2_{H^2}\ ,
\end{equation}
and the measure is given by
\begin{equation}
d\mu(t,u) = \gamma'(u) du dt\ .
\end{equation}
\begin{remark}
The assumption $f\in H^2(\R)$ is not as irrelevant as it may appear. Indeed, in signal processing most signals are real-valued, so that their Fourier transform possess the Hermitean symmetry, \ie\, $\hat f(-\xi)=\overline{\hat f(\xi)}$. A real-valued signal $f\in L^2(\R)$ is then characterized by its orthogonal projection onto $H^2(\R)$, and can be reconstructed as the real part of the latter (up to a factor 2).
\end{remark}
\subsection{Definition of the frequency-focused transform}
\label{ssec:def.FFTFT}
We now introduce the frequency focus effect, generated by an associated frequency focus function $\sigma^\nu$. The role of the focus function is to modify the shape of the analysis waveforms, in a way that depends on some local behavior of the analyzed signal $f$. 

\paragraph{Assumptions}
Throughout this section, we make the following assumptions
\begin{enumerate}[i.]
\item 
$\psi\in H^2(\R)$ is an analytic wavelet function, therefore satisfying the admissibility condition~\eqref{fo:CWT.admissibility.H2}, and such that the quantity below (called frequency localization of $\psi$) is well-defined.
\label{FFTFT.assumption.psi}
\begin{equation}
\label{eq_wavelet_freq_loc}
\xi_0 := \frac{1}{\|\psi\|^2}\int_{\R_+} \xi |\hat\psi(\xi)|^2\,d\xi\ .    
\end{equation}
In addition, we assume that $|\hat\psi|^2$ is differentiable, and make the following technical assumptions:
\begin{itemize}
\item
$\hat\psi(\xi_0)\ne 0$
\item 
There exists $A_\psi>0$ such that for all $\xi\in\R_+$, 
\begin{equation}
\label{eq_hypothesis_psi}
\left(|\hat\psi|^2\right)'(\xi) \le \frac{A_\psi}{|\xi-\xi_0|}\ .
\end{equation}
\end{itemize}
\item
$\gamma$ denotes a positive, strictly increasing $C^1$ diffeomorphism that maps $\R$ onto $\R_+^*$. 
\item
To every $f\in L^2(\R)$ is associated a focus function $\sigma^\nu_f$ of $f\in L^2(\R)$, assumed to be  continuous, larger than 1 and such that $\sigma_f^\nu -1$ goes to 0 at $\pm\infty$:
\begin{equation}
	\forall f\in L^2(\R), \quad \sigma^\nu_f \in C_{0}^+(\R) + 1\ .
\end{equation}
\end{enumerate}

\paragraph{Time-frequency atoms and transform}
Wavelet ans constant-Q transforms use time-frequency atoms with constant relative bandwidth (\ie\, bandwidth divided by the frequency localization). The frequency-focused transform uses time-frequency atoms with prescribed frequency localization and bandwidth. This requires introducing an appropriate notion of frequency localization. Given a function $f\in H^2(\R)$, its frequency localization is defined by extending Equation~\eqref{eq_wavelet_freq_loc}:
$\frac{1}{\|f\|_{H^2}^2}\int_{\R_+} \xi |\hat f(\xi)|^2\,d\xi$
provided the integral is well-defined. 

\smallskip
The joint control of bandwidth and frequency localization is achieved by so-called squeezing functions $\xi\to\beta_u(\xi)$ defined as follows: for every $u\in\R$, we set
\begin{equation}
\label{eq_squeezing_fct}
\beta_u(\xi) = \frac{\sigma_f^\nu(u)}{\gamma(u)}\xi - \xi_1(u)\ , 
\end{equation}
for some shift parameter $\xi_1(u)>0$, to be specified below.

Given these parameters, we introduce frequency-focused atoms, defined by their Fourier transform
\begin{equation}
\widehat{\psi_{t,u,\sigma}}(\xi)=\frac{1}{\sqrt{\gamma(u)}}\,\hat\psi\left(\beta_u(\xi)\right)e^{-2i\pi\xi t}\ .
\end{equation}
A simple calculation shows that $\|\psi_{t,u,\sigma}\|^2 = \|\psi\|^2/\sigma(u)$.

The shift parameters $\xi_1(u)$ are fixed by imposing that the localization of $\left|\widehat{\psi_{t,u,\sigma}}(\xi)\right| $ equals $\gamma(u)\xi_0$, which yields
\begin{eqnarray*}
\gamma(u)\xi_0 &=& \frac{1}{\|\psi_{t,u,\sigma}\|^2}\frac{1}{\gamma(u)}
\int_{\R_+}\xi \left|\hat\psi\left(\frac{\sigma(u)}{\gamma(u)}\xi - \xi_1(u)\right)\right|^2\,d\xi\\
&=&\frac{1}{\gamma(u)\|\psi_{t,u,\sigma}\|^2}\int_{\R_+}\frac{\gamma(u)}{\sigma(u)}(\zeta+\xi_1(u))\left|\hat\psi(\zeta)\right|^2\frac{\gamma(u)}{\sigma(u)}\,d\zeta\\
&=&\frac{\gamma(u)}{\|\psi\|^2\sigma(u)}\left[\int_{\R_+}\zeta \left|\hat\psi(\zeta)\right|^2\,d\zeta + \xi_1(u) \|\hat\psi\|^2 \right]\\
&=&\frac{\gamma(u)}{\sigma(u)}\left[\xi_0+\xi_1(u)\right]\ ,
\end{eqnarray*}
therefore we obtain
\begin{equation}
\label{eq_xi_1}
\xi_1(u)=(\sigma(u)-1)\xi_0\ .
\end{equation}
Notice that when $u\to\pm\infty$, $\xi_1(u)\to 0$ and $\beta_u(\xi)\sim \xi/\gamma(u)$.

\medskip
The practical effect of such a squeezing is illustrated in Fig.~\ref{fi:freq_squeezing}, where a squeezing equal to 3 has been applied to three adjacent time-frequency atoms, whose bandwidth is therefore reduced while their amplitude is increased.

\begin{remark}
\begin{enumerate}
\item
Since $\xi_0>0$ and for all $u$ $\gamma(u)>0$ and $\sigma(u)\ge 1$, we have that $\beta_u(\xi)<0$ for all $\xi<0$; hence $\widehat{\psi_{t,u,\sigma}}(\xi)=0$ for all $\xi<0$.
\item
The frequency localization may actually be defined in several different ways. For example, assuming that $\hat\psi$ is a continuous function, the localization parameter $\xi_0$ may be defined as the mode of $|\hat\psi|$, by setting $\xi_0 = \mathop{\text{argmax}}_{\xi\in\R_+^*} |\hat\psi(\xi)|$.
In this case, using the same localization measure for $\widehat{\psi_{t,u,\sigma}}$, imposing that $\mathop{\text{argmax}}_{\xi\in\R_+^*} |\widehat{\psi_{t,u,\sigma}}| = \xi_0\gamma(u)$ is equivalent to $\beta_u(\gamma(u)\xi_0)=\xi_0$, which yields the same expression~\eqref{eq_xi_1} for the frequency shifts $\xi_1(u)$.
\end{enumerate}
\end{remark}

\begin{figure}[h!]
    \centering
    \includegraphics[scale=.5]{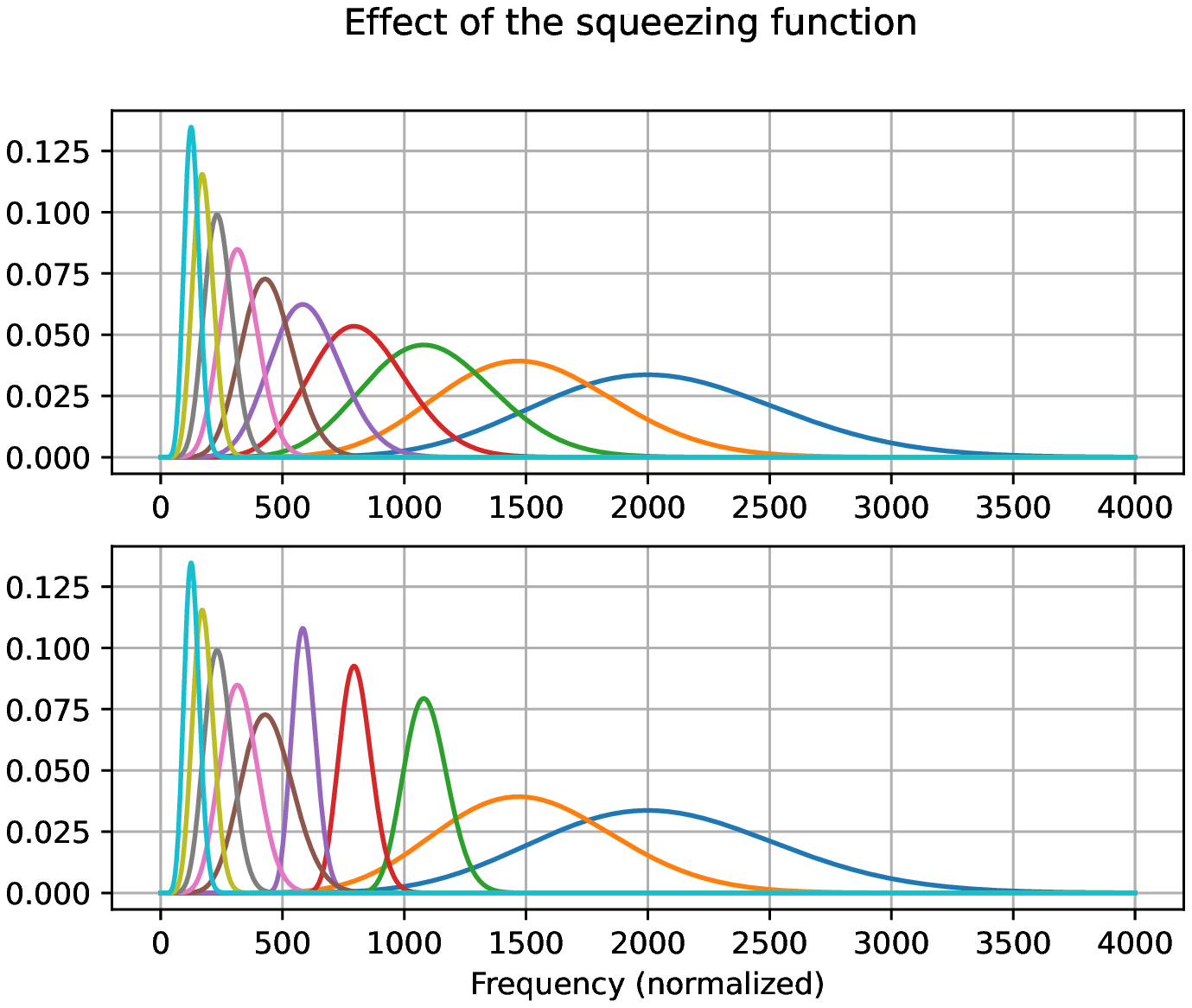}
    \caption{Frequency squeezing: Fourier transforms of constant-Q atoms (top), and the same atoms out of which three have been squeezed (bottom) by a factor 3.}
    \label{fi:freq_squeezing}
\end{figure}

Given these notations, the frequency-focused transform $M^\nu$ can be defined for $f\in C_c$ by
\begin{equation}
    \forall t,u\in\R,\quad \ M^\nu f(t,u) := \langle f , \psi_{t,u,\sigma_f^\nu}\rangle_{L^2}\ ,
\end{equation}
Plancherel's formula gives the following form
\begin{equation}
M^\nu f(t,u) = \langle \hat f,\widehat{\psi_{t,u,\sigma_f^\nu}}\rangle = \frac{1}{\sqrt{\gamma(u)}}\,\int_{\R_+} \hat f(\xi)\overline{\hat\psi}\left(\frac{\sigma_f^\nu(u)}{\gamma(u)}\xi - (\sigma(u)-1)\xi_0\right) e^{2i\pi \xi t}\,d\xi
\end{equation}

\begin{remark}
Notice that because of our choice of normalization, the time-frequency atoms do not have constant norm any more. Retaining constant norm would impose to normalize them by $\sqrt{\sigma_f}$ instead of $\sigma_f$, but this would in turn lead to multiply the measure $\mu$ by a factor that depends explicitly on $f$, which we want to avoid.
\end{remark}

\paragraph{Motivations and examples for the frequency focus function}
Examples of time focus functions were given in the corresponding paragraph in the previous Section. The rationale for frequency focus functions should follow similar objectives, we won't discuss them here, and refer to Section~\ref{se:numerical.experiments} devoted to numerical experiments.

\bigskip

The main purpose of the next sections is to establish that $M^\nu$ can be extended to a well-defined map from $H^2(\R,dx)\rightarrow L^2(\R^2,d\mu)$ satisfying a norm control similar to a frame bound control.  The pointwise definition of $M^\nu f(t,\u)$ on $L^2$ is still guaranteed by the fact that $h\in L^2$. Regarding the definition from $H^2$ into $L^2(\R^2,d\mu)$ we can raise that $M^\nu$ is well-defined from $C_c$ into $L^2(\R^2,d\mu)$ and then extend the control by the use of the Fatou's lemma.

Sometimes, if there are no ambiguities, we will write $\psi_{t,\u,f}$ instead of $\psi_{t,\u,\sigma_f}$ and $\psi_{t,\u,n}$ instead of $\psi_{t,\u,\sigma_{f_n}}$ for a certain sequence $(f_n)_n$.

\subsection{Kernel and norm relationship}

To prove the main result we first derive a norm relationship involving a certain non-negative valued kernel $K_\sigma$, so that the study of the norm of $M^\nu f$ will be determined by the norm $\|K_\sigma\|_\infty$.
\begin{prop}
	Let $\psi$ satisfying assumptions~\ref{FFTFT.assumption.psi}. in Section~{\ref{ssec:def.FFTFT}}. Then we have for every $f\in H^2(\R)$
	\begin{equation}
		\|M^\nu f\|_{L^2(d\mu)}^2 = \int_{\R_+} |\hat{f}(\xi)|^2 K_{\sigma_f^\nu}(\xi)d\xi\ ,
	\end{equation}
	where the kernel $K_{\sigma_f^\nu}(\xi)$ is defined by 
\begin{equation}\label{kernel_frequency_case}
		K_{\sigma_f^\nu}(\xi) := \int_{\R_+} \left|\hat{\psi}\left( \beta_{\gamma^{-1}(y)}(\xi) \right)\right|^2\frac{dy}{y}\ .
\end{equation}
\end{prop}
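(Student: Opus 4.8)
The plan is to evaluate $\|M^\nu f\|_{L^2(d\mu)}^2$ directly, performing the $t$-integration first at fixed scale variable $u$ and then integrating over $u$; I would carry this out rigorously for $f\in C_c$ and handle the passage to $H^2(\R)$ at the end.

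First I would fix $u\in\R$ and use the Plancherel expression
\[
M^\nu f(t,u)=\frac{1}{\sqrt{\gamma(u)}}\int_{\R_+}\hat f(\xi)\,\overline{\hat\psi}\bigl(\beta_u(\xi)\bigr)\,e^{2i\pi\xi t}\,d\xi\ ,
\]
which identifies $t\mapsto M^\nu f(t,u)$ as $\gamma(u)^{-1/2}$ times the inverse Fourier transform (in the first variable) of $g_u:=\mathbf 1_{\R_+}\,\hat f\cdot\overline{\hat\psi\circ\beta_u}$. A linear change of variable in the integral defining $\|\hat\psi\circ\beta_u\|_{L^2}$, using $\beta_u(\xi)=\tfrac{\sigma_f^\nu(u)}{\gamma(u)}\xi-\xi_1(u)$, gives $\|\hat\psi\circ\beta_u\|_{L^2}^2=\frac{\gamma(u)}{\sigma_f^\nu(u)}\|\psi\|^2<\infty$; since $\hat f\in L^2\cap L^\infty$ for $f\in C_c$, this puts $g_u$ in $L^1\cap L^2$, so Plancherel yields
\[
\int_\R|M^\nu f(t,u)|^2\,dt=\frac{1}{\gamma(u)}\int_{\R_+}|\hat f(\xi)|^2\,\bigl|\hat\psi(\beta_u(\xi))\bigr|^2\,d\xi\ .
\]

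Next, since $d\mu(t,u)=\gamma'(u)\,du\,dt$, I would multiply by $\gamma'(u)$ and integrate over $u$; the left-hand side becomes $\|M^\nu f\|_{L^2(d\mu)}^2$, and as the integrand is nonnegative, Tonelli's theorem allows exchanging the $u$- and $\xi$-integrations. In the resulting inner integral $\int_\R |\hat\psi(\beta_u(\xi))|^2\,\gamma'(u)\gamma(u)^{-1}\,du$ I would substitute $y=\gamma(u)$: because $\gamma$ is a strictly increasing $C^1$ diffeomorphism of $\R$ onto $\R_+^*$, $y$ ranges over $\R_+^*$, with $dy=\gamma'(u)\,du$, hence $\gamma'(u)\gamma(u)^{-1}\,du=dy/y$ and $\beta_u=\beta_{\gamma^{-1}(y)}$, so the inner integral is precisely $K_{\sigma_f^\nu}(\xi)$ as in~\eqref{kernel_frequency_case}. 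This establishes the identity for $f\in C_c$.

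Finally, for general $f\in H^2(\R)$ I would note that the whole chain of equalities remains valid with values in $[0,+\infty]$ (the first step still holds, with both sides equal to $+\infty$ precisely when $g_u\notin L^2$, using that $\mathcal F^{-1}$ sends an $L^1$ function to an $L^2$ function only if it is itself $L^2$; Tonelli and the change of variable are unaffected), or alternatively approximate $f$ by a sequence in $C_c$ and pass to the limit via Fatou's lemma as indicated above. The one point requiring care is the slicewise application of Plancherel in the first step --- i.e. knowing $M^\nu f(\cdot,u)\in L^2(\R)$, which is why one starts with $f\in C_c$; everything else (the Tonelli exchange, the change of variable $y=\gamma(u)$) is routine bookkeeping.
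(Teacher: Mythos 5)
Your proof is correct and follows essentially the same route as the paper's: a slice-wise Plancherel identity in $t$ (via the auxiliary function $\hat f\cdot\overline{\hat\psi\circ\beta_u}$, which the paper calls $F(\cdot,u)$), followed by an interchange of the $u$- and $\xi$-integrations and the substitution $y=\gamma(u)$. Your justification of the interchange by Tonelli (nonnegativity) is in fact slightly cleaner than the paper's appeal to Fubini via the convergence of the $y$-integral established later in Theorem~\ref{theorem_norm_control_frequency_focus}, and your handling of the extension from $C_c$ to $H^2$ matches the Fatou-type argument the paper sketches.
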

\begin{proof}
We first introduce the auxiliary function $F(\xi,u) = \hat f(\xi) \overline{\hat\psi}(\beta_u(\xi))$, and notice that $F(\cdot,u)\in L^1(\R_+)$ for all $u\in\R$. Then compute
\begin{eqnarray*}
\|M^\nu f\|_{L^2(\R^2,d\mu)}^2&=& \int_{\R^2\times\R_+^2} F(\xi,u)\overline{F}(\xi',u) e^{2i\pi (\xi-\xi')t}\frac{\gamma'(u)}{\gamma(u)}d\xi d\xi' dudt\\
&=&\int_{\R^2} \left|(\mathcal{F}_1^{-1} F)(t,u)\right|^2 \frac{\gamma'(u)}{\gamma(u)} dudt\\
&=& \int_{\R^2} \left| F(\xi,u)\right|^2 \frac{\gamma'(u)}{\gamma(u)} dud\xi\\
&=& \int_{\R^2} \left|\hat f(\xi)\right|^2\,\left|\hat\psi(\beta_u(\xi))\right|^2 \frac{\gamma'(u)}{\gamma(u)} dud\xi\\
&=&\int_{\R_+^2} \left|\hat f(\xi)\right|^2\,\left|\hat{\psi}\left( \beta_{\gamma^{-1}(y)}(\xi) \right)\right|^2\frac{dy}{y}d\xi\ ,
\end{eqnarray*}
where we have denoted by $\mathcal{F}_1^{-1}F$ the inverse Fourier transform of $F$ with respect to its first variable, and then used the corresponding Plancherel formula. The argument above involve the use of Fubini's theorem which is justified by the fact that the integral with respect to $y$ is convergent, this fact is proved in Theorem~\ref{theorem_norm_control_frequency_focus}.
\end{proof}

A simple change of variable gives the following alternative expression for the kernel:
\begin{corollary}\label{cor_second_expression_kernel}
The kernel $K_\sigma$ may be written as
    \begin{equation}
        \forall \xi>0 \ ,\quad \ K_\sigma(\xi)= \int_{\R_+}\left|\hat{\psi}\left(\sigma\circ\gamma^{-1}\left(\tfrac{\xi}{y}\right)y-\xi_0\left(\sigma\circ\gamma^{-1}\left(\tfrac{\xi}{y}\right)-1\right)\right)\right|^2\tfrac{dy}{y}\ .
    \end{equation}
\end{corollary}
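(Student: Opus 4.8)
The plan is to start from the kernel expression in Equation~\eqref{kernel_frequency_case},
\[
K_\sigma(\xi) = \int_{\R_+} \left|\hat\psi\left(\beta_{\gamma^{-1}(y)}(\xi)\right)\right|^2\frac{dy}{y}\ ,
\]
and perform the change of variable that swaps the roles of the integration variable and the frequency $\xi$. Recall that for $u\in\R$ the squeezing function is $\beta_u(\xi) = \frac{\sigma_f^\nu(u)}{\gamma(u)}\xi - \xi_1(u)$ with $\xi_1(u) = (\sigma(u)-1)\xi_0$ from Equation~\eqref{eq_xi_1}. Writing $u = \gamma^{-1}(y)$, the argument of $\hat\psi$ in the integrand is $\frac{\sigma\circ\gamma^{-1}(y)}{y}\,\xi - \xi_0\bigl(\sigma\circ\gamma^{-1}(y) - 1\bigr)$.

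First I would introduce the new variable $z = \xi/y$, equivalently $y = \xi/z$. Since $\xi>0$ is fixed and $y$ ranges over $\R_+$, the variable $z$ also ranges over $\R_+$, and the map $y\mapsto z=\xi/y$ is a decreasing diffeomorphism of $\R_+$ onto itself with $dy = -\xi\,dz/z^2$. The key observation is that the multiplicative measure is invariant: $\frac{dy}{y} = \frac{-\xi\,dz/z^2}{\xi/z} = -\frac{dz}{z}$, and after reversing the orientation of the interval the sign is absorbed, so $\frac{dy}{y}$ simply becomes $\frac{dz}{z}$. Substituting $y = \xi/z$ into the argument of $\hat\psi$, the factor $\frac{\sigma\circ\gamma^{-1}(y)}{y}\,\xi$ becomes $z\cdot\sigma\circ\gamma^{-1}(\xi/z)$, and the shift term becomes $\xi_0\bigl(\sigma\circ\gamma^{-1}(\xi/z) - 1\bigr)$; renaming $z$ back to $y$ yields exactly the claimed expression.

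The steps in order are therefore: (1) recall the explicit form of $\beta_u$ and substitute $u=\gamma^{-1}(y)$ to write the integrand argument explicitly; (2) apply the substitution $y\mapsto \xi/y$ and verify the Jacobian computation $\tfrac{dy}{y}\mapsto \tfrac{dy}{y}$ (including the sign/orientation bookkeeping); (3) simplify the argument of $\hat\psi$ and rename the variable. There is no real obstacle here — this is a genuinely elementary computation, as the corollary statement says. The only point requiring a modicum of care is the orientation reversal under the decreasing substitution $y\mapsto\xi/y$ together with the invariance of the logarithmic measure $dy/y$; once that is handled correctly, the result is immediate.
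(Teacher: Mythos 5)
Your proof is correct and is precisely the ``simple change of variable'' the paper invokes: substituting $y\mapsto \xi/y$, which preserves the multiplicative measure $\tfrac{dy}{y}$ on $\R_+$ up to the orientation reversal you account for, turns $\beta_{\gamma^{-1}(y)}(\xi)=\tfrac{\sigma\circ\gamma^{-1}(y)}{y}\xi-\xi_0\bigl(\sigma\circ\gamma^{-1}(y)-1\bigr)$ into the claimed argument. This matches the paper's (unwritten) argument exactly, so there is nothing further to check.
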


	\begin{remark}
		If $\sigma$ is fixed and independent of $f$, the time-frequency atoms $h_{t,\omega,\sigma(\omega)}$ form a \textit{continuous frame} of $L^2(\R)$ in the terminology of~\cite{Ali2000coherent,Dahlke2008generalized}. We will see that in the general case, the assumptions made on $\sigma$ allow one to stay in a tractable situation.
\end{remark} 
From now on, the goal is to obtain upper and lower bounds for the kernel $K_\sigma$.

\subsection{Main result : the $L^2$ norm control}\label{ssection_norm_control_frequency_focus}

The main result of this section is the following Theorem~\ref{theorem_norm_control_frequency_focus} which is a generalization to non-linear transform with adaptive window of the classical frame control.
\begin{theorem}\label{theorem_norm_control_frequency_focus}
Let $\psi$ satisfying assumptions~\ref{FFTFT.assumption.psi}. in Section~{\ref{ssec:def.FFTFT}}.
	Let $f\in H^2(\R)$,
 and let $\sigma^\nu_f\in C_0^+(\R)+1$ denote the associated frequency-focus function.
    Then we have
	\begin{equation}
		d_\psi\|f\|_{L^2(\R)}^2 \leqslant \|M^\nu f \|_{L^2(\R^2,d\mu)}^2 \leqslant C_{\sigma_f^\nu}\|f\|_{L^2(\R)}^2\ ,
	\end{equation}
	where $d_\psi>0$ depends only on the wavelet $\psi$, and $C_{\sigma_f^\nu}$ is given by
	\begin{equation}
		C_{\sigma_f^\nu} := c_\psi + A_\psi\ \int_{\R_+}\left(\sigma_f^\nu(\gamma^{-1}(y))-1\right)\frac{dy}{y} <+\infty,
	\end{equation}
\end{theorem}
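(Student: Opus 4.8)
The plan is to reduce the two-sided norm bound to pointwise bounds on the kernel $K_{\sigma_f^\nu}$ via the identity $\|M^\nu f\|_{L^2(d\mu)}^2 = \int_{\R_+} |\hat f(\xi)|^2 K_{\sigma_f^\nu}(\xi)\,d\xi$ established in the previous Proposition. Indeed, if one shows
\[
d_\psi \le K_{\sigma_f^\nu}(\xi) \le C_{\sigma_f^\nu}\qquad\text{for all }\xi>0,
\]
then the theorem follows by integrating against $|\hat f(\xi)|^2$ and invoking Plancherel ($\|f\|_{L^2}=\|\hat f\|_{L^2(\R_+)}$ since $f\in H^2$), together with a Fatou-type argument to pass from $f\in C_c$ (where Fubini was used) to general $f\in H^2$. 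So the heart of the matter is a uniform-in-$\xi$ two-sided estimate on the integral
\[
K_{\sigma_f^\nu}(\xi)= \int_{\R_+}\left|\hat{\psi}\bigl(\beta_{\gamma^{-1}(y)}(\xi)\bigr)\right|^2\frac{dy}{y}, \qquad \beta_u(\xi)=\frac{\sigma_f^\nu(u)}{\gamma(u)}\xi-(\sigma_f^\nu(u)-1)\xi_0 .
\]

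For the \textbf{upper bound}, the strategy is to compare $K_{\sigma_f^\nu}$ with the ordinary wavelet kernel $c_\psi=\int_{\R_+}|\hat\psi(y)|^2\,dy/y$ obtained when $\sigma\equiv 1$. Using the change of variables from Corollary~\ref{cor_second_expression_kernel}, write $K_\sigma(\xi)=\int_{\R_+}|\hat\psi(s(y))|^2\,dy/y$ where $s(y)=\sigma(\gamma^{-1}(\xi/y))\,y-\xi_0(\sigma(\gamma^{-1}(\xi/y))-1)$, and note that when $\sigma=1$ this is just $s_0(y)=y$. I would estimate the difference $|\hat\psi(s(y))|^2-|\hat\psi(y)|^2$ by the mean value theorem applied to $|\hat\psi|^2$ between $y$ and $s(y)$: since $s(y)-y = (\sigma(\gamma^{-1}(\xi/y))-1)(y-\xi_0)$, and the derivative bound~\eqref{eq_hypothesis_psi} gives $|(|\hat\psi|^2)'(\zeta)|\le A_\psi/|\zeta-\xi_0|$, one wants the intermediate point $\zeta$ between $y$ and $s(y)$ to stay on the same side of $\xi_0$ so that $|\zeta-\xi_0|\ge |y-\xi_0|$ (this is where the structure $s(y)-y\propto (y-\xi_0)$ is essential — scaling toward or away from $\xi_0$ by a factor $\ge 1$ never crosses $\xi_0$). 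This yields
\[
\bigl||\hat\psi(s(y))|^2-|\hat\psi(y)|^2\bigr|\le \frac{A_\psi}{|y-\xi_0|}\cdot(\sigma(\gamma^{-1}(\xi/y))-1)|y-\xi_0| = A_\psi\bigl(\sigma(\gamma^{-1}(\xi/y))-1\bigr),
\]
and after a further change of variable $v=\xi/y$ (absorbing the $dy/y$ into $dv/v$, up to sign/orientation) the correction term integrates to $A_\psi\int_{\R_+}(\sigma(\gamma^{-1}(v))-1)\,dv/v$, giving exactly $C_{\sigma_f^\nu}$. The finiteness of this integral — hence well-definedness of everything and a posteriori justification of Fubini — follows from $\sigma_f^\nu-1\in C_0^+$ decaying at $\pm\infty$ together with integrability of $dv/v$ near suitable endpoints; one should check the behaviour near $v=0$ and $v=\infty$ carefully, using that $\gamma^{-1}$ pushes these to $\pm\infty$ where $\sigma-1\to 0$.

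For the \textbf{lower bound}, the key point is that $\sigma\ge 1$ should make things only \emph{larger} near the support, or at least boundable below by a $\sigma$-independent constant $d_\psi$. I would argue that since $\hat\psi(\xi_0)\ne 0$ and $|\hat\psi|^2$ is continuous, there is an interval $(\xi_0-\delta,\xi_0+\delta)$ on which $|\hat\psi|^2\ge m>0$; then for each fixed $\xi>0$ one exhibits an explicit range of $y$ for which $\beta_{\gamma^{-1}(y)}(\xi)$ lands in that interval, with $dy/y$-measure bounded below independently of $\xi$ and of $\sigma$. Concretely, solving $\beta_u(\xi)=\xi_0$ gives $\sigma(u)\xi/\gamma(u)=\xi_0$, i.e. $y=\gamma(u)$ satisfies $\sigma(\gamma^{-1}(y))\,\xi/y=\xi_0$, so the ``resonant'' value always exists; a perturbation argument around it, uniform because $\sigma\ge 1$ controls the relevant derivatives, produces the positive lower bound $d_\psi$. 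The \textbf{main obstacle} I anticipate is precisely making this lower-bound neighbourhood uniform in $\xi$ and in the (signal-dependent, merely continuous) function $\sigma_f^\nu$ — one must ensure the change of variables $y\mapsto \beta_{\gamma^{-1}(y)}(\xi)$ is a genuine diffeomorphism onto a fixed-size neighbourhood of $\xi_0$ with Jacobian controlled from below, and the non-monotonicity or flatness of $\gamma$ combined with an only-continuous $\sigma$ could make this delicate; a clean way out may be to bound $K_\sigma(\xi)$ below by the $\sigma\equiv 1$ kernel restricted to a sub-region where the squeezing is provably harmless, then invoke the admissibility positivity $c_\psi>0$.
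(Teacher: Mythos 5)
Your reduction to pointwise bounds on the kernel $K_{\sigma_f^\nu}$ and your upper-bound argument coincide with the paper's (Proposition~\ref{prop_upper_bound_control}): the same mean-value comparison with the $\sigma\equiv 1$ kernel $c_\psi$, the same observation that the intermediate point $\zeta=\xi_0+(y-\xi_0)(1+\theta(\sigma-1))$ satisfies $|\zeta-\xi_0|\ge|y-\xi_0|$ because the squeezing dilates distances to $\xi_0$ by a factor $\ge 1$, and the same cancellation against hypothesis~\eqref{eq_hypothesis_psi}. (Your caveat that finiteness of $\int_{\R_+}(\sigma(\gamma^{-1}(y))-1)\frac{dy}{y}$ does not follow from $\sigma-1\in C_0^+$ alone is warranted; the paper asserts this finiteness without proof.)

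The lower bound is where your proposal has a genuine gap: you correctly identify that one must exhibit a set of $y$'s of $dy/y$-measure bounded below, uniformly in $\xi$ and in $\sigma$, on which $\beta_{\gamma^{-1}(y)}(\xi)$ lands in a fixed neighbourhood of $\xi_0$, and then you stop, listing this as the main obstacle. The paper's device (Proposition~\ref{prop_lower_bound_kernel_basic}) is elementary and avoids any diffeomorphism or Jacobian control: choose $(a,b)\ni\xi_0$ with $|\hat\psi|^2\ge|\hat\psi(\xi_0)|^2/2$ there, and write $\alpha_\xi(y)=\beta_{\gamma^{-1}(y)}(\xi)=\xi_0+\left(\frac{\xi}{y}-\xi_0\right)\sigma(\gamma^{-1}(y))$. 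Since $\sigma\ge1$ dilates the distance of $\xi/y$ to $\xi_0$, one gets $\alpha_\xi(\xi/a)\le a$ and $\alpha_\xi(\xi/b)\ge b$, so by the intermediate value theorem the continuous image of the interval $(\xi/b,\xi/a)$ --- whose $dy/y$-length is exactly $\ln(b/a)$ --- contains $(a,b)$, yielding $d_\psi=\frac{1}{2}|\hat\psi(\xi_0)|^2\ln(b/a)$. Two further points. First, your ``resonant'' equation is off: solving $\beta_u(\xi)=\xi_0$ gives $\xi/\gamma(u)=\xi_0$ (the factor $\sigma(u)$ cancels, by design of the shift $\xi_1(u)$), not $\sigma(u)\xi/\gamma(u)=\xi_0$. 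Second, your worry about quantitative control is not idle even against the paper's own write-up: the covering property $\alpha_\xi((\xi/b,\xi/a))\supset(a,b)$ does not by itself imply $\int_{\alpha_\xi^{-1}((a,b))}\frac{dy}{y}\ge\ln(b/a)$ (a set whose image covers $(a,b)$ may have small measure when $\sigma$ is large), so the measure estimate you flagged is indeed the crux and is the step that still requires a careful argument.
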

The Theorem is proven in Propositions~\ref{prop_upper_bound_control} and~\ref{prop_lower_bound_kernel_basic}.
\begin{remark}
\label{rem:FFTFT.lower.bound}
As we shall see in the proof below, the assumptions on $\psi$ insure the existence of an interval $(a,b)$ containing $\xi_0$ such that $|\hat\psi(\xi)|^2 \ge |\hat\psi(\xi_0)|^2/2$ for all $\xi\in (a,b)$, which yields the lower bound
\[
d_\psi = \frac{|\hat{\psi}(\xi_0)|^2}{2}\ln\left(\frac{b}{a}\right)\ .
\]
\end{remark}

\subsubsection{Upper bound control}

We have the following upper bound control.

\begin{prop}\label{prop_upper_bound_control}
    Let $\sigma\in C_0^+(\R)+1$ be a focus function.
    \begin{align}
        \|K_{\sigma}\|_\infty \leqslant c_\psi + A_\psi \int_{\R^+}\left(\sigma(\gamma^{-1}(y))-1\right)\frac{dy}{y}\ ,
    \end{align}
    where $A_\psi$ is given in Equation~\eqref{eq_hypothesis_psi}.
\end{prop}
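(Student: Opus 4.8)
The plan is to bound $K_\sigma(\xi)$ pointwise in $\xi>0$, using the expression from Corollary~\ref{cor_second_expression_kernel}. Writing $s(y) := \sigma\circ\gamma^{-1}(\xi/y)$ for brevity, the integrand is $|\hat\psi(s(y)y - \xi_0(s(y)-1))|^2 = |\hat\psi(s(y)(y-\xi_0) + \xi_0)|^2$. The key idea is to compare this to the value it would take if $\sigma$ were identically $1$, namely $|\hat\psi(y)|^2$, whose integral against $dy/y$ is exactly the admissibility constant $c_\psi$. So the first step is the decomposition
\begin{equation*}
K_\sigma(\xi) = \int_{\R_+}|\hat\psi(y)|^2\frac{dy}{y} + \int_{\R_+}\left(\left|\hat\psi\big(s(y)(y-\xi_0)+\xi_0\big)\right|^2 - \left|\hat\psi(y)\right|^2\right)\frac{dy}{y} = c_\psi + \int_{\R_+}\Delta(y)\,\frac{dy}{y}\ ,
\end{equation*}
and the whole problem reduces to estimating $\Delta(y)$.

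The second step is to estimate $\Delta(y)$ via the fundamental theorem of calculus applied to the differentiable function $|\hat\psi|^2$. Both arguments $s(y)(y-\xi_0)+\xi_0$ and $y$ lie on the same side of $\xi_0$ (since $s(y)\ge 1$, the map $r\mapsto r(y-\xi_0)+\xi_0$ moves away from $\xi_0$ in the direction of $y$), so integrating $(|\hat\psi|^2)'$ along the segment between them and using the hypothesis~\eqref{eq_hypothesis_psi} that $(|\hat\psi|^2)'(\zeta)\le A_\psi/|\zeta-\xi_0|$ gives a clean bound. Concretely, parametrizing by $\zeta = r(y-\xi_0)+\xi_0$ for $r$ between $1$ and $s(y)$, one has $|\zeta-\xi_0| = r|y-\xi_0|$ and $d\zeta = (y-\xi_0)\,dr$, so
\begin{equation*}
\Delta(y) \le \int_1^{s(y)} \frac{A_\psi}{r|y-\xi_0|}\,|y-\xi_0|\,dr = A_\psi \ln\big(s(y)\big) \le A_\psi\big(s(y)-1\big)\ ,
\end{equation*}
using $\ln x \le x-1$. (One must also note $\Delta(y)\ge 0$ is not needed — only the upper bound matters — but the inequality $(|\hat\psi|^2)'\le A_\psi/|\zeta-\xi_0|$ is a genuine two-sided control only on the side where the function increases toward $\xi_0$; on the decreasing side the bound $\Delta(y)\le A_\psi\ln s(y)$ still follows since we are moving in the direction of decrease, making $\Delta(y)\le 0$ there, or one simply keeps the crude bound. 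This sign bookkeeping is the main subtlety.)

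The third step is to integrate: substituting into the decomposition,
\begin{equation*}
K_\sigma(\xi) \le c_\psi + A_\psi\int_{\R_+}\big(s(y)-1\big)\frac{dy}{y} = c_\psi + A_\psi\int_{\R_+}\big(\sigma(\gamma^{-1}(\xi/y))-1\big)\frac{dy}{y}\ ,
\end{equation*}
and the change of variable $y\mapsto \xi/y$ (which preserves $dy/y$ up to sign, mapping $\R_+$ onto itself) removes the $\xi$-dependence, yielding the stated bound uniformly in $\xi$, hence the bound on $\|K_\sigma\|_\infty$. Finiteness of the right-hand side follows because $\sigma-1\in C_0^+(\R)$ and $\gamma$ maps $\R$ onto $\R_+^*$ with the right behavior at the endpoints, so $y\mapsto(\sigma(\gamma^{-1}(y))-1)/y$ is integrable near $0$ and $\infty$ — this is exactly the convergence claim deferred from the kernel proposition, and checking it carefully (integrability at both ends of $\R_+$) is the remaining technical point. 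I expect the main obstacle to be precisely the sign/direction analysis in the FTC step: making sure that the segment joining $y$ to $s(y)(y-\xi_0)+\xi_0$ stays on one side of $\xi_0$ and that hypothesis~\eqref{eq_hypothesis_psi} is applied on the correct branch, together with verifying the integrability of the error term at $0$ and $+\infty$.
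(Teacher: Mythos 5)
Your proposal is correct and follows essentially the same route as the paper: both decompose $K_\sigma(\xi)$ as $c_\psi$ plus an error term and control that error via the derivative hypothesis~\eqref{eq_hypothesis_psi}, using the identity $\beta_u(\xi)-\xi/\gamma(u)=(\sigma(u)-1)(\xi/\gamma(u)-\xi_0)$ and the fact that the intermediate point lies at distance at least $|\xi/\gamma(u)-\xi_0|$ from $\xi_0$; the paper invokes the mean value theorem where you integrate $(|\hat\psi|^2)'$ along the segment, which is the same estimate (your $A_\psi\ln s(y)$ is marginally sharper before relaxing to $A_\psi(s(y)-1)$). Note that the paper, like you, in fact needs the two-sided bound $|(|\hat\psi|^2)'(\zeta)|\leqslant A_\psi/|\zeta-\xi_0|$ (it bounds $|\varphi'(\zeta_{u,\xi})|$), so your sign-bookkeeping worry is resolved by reading~\eqref{eq_hypothesis_psi} as a bound on the absolute value.
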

\begin{proof}
    Let $\xi\geqslant0$ and write $\vphi(\xi)=|\hat{\psi}(\xi)|^2$ for simplicity. We will use the following expression for the kernel, which results from a change of variable,
    \begin{equation*}
        K_\sigma(\xi) = \int_\R \vphi(\beta_u(\xi))\frac{\gamma'(u)}{\gamma(u)}du.
    \end{equation*}
    Hence, by the mean value theorem, we can write
    \begin{align*}
        K_\sigma(\xi) - c_\psi & = \int_\R\left[ \vphi(\beta_u(\xi)) - \vphi\left(\tfrac{\xi}{\gamma(u)}\right) \right]\tfrac{\gamma'(u)}{\gamma(u)} du \\
        & = \int_\R \left( \beta_u(\xi)-\tfrac{\xi}{\gamma(u)} \right) \vphi'(\zeta_{u,\xi})\tfrac{\gamma'(u)}{\gamma(u)}du\ ,
    \end{align*}
    with
    \begin{equation*}
        \zeta_{u,\xi} = \xi_0 +\left( \tfrac{\xi}{\gamma(u)}-\xi_0 \right)(1+\theta_{u,\xi}(\sigma(u)-1)), \quad \theta_{u,\xi}\in (0,1).
    \end{equation*}
    Since $\beta_u(\xi)-\tfrac{\xi}{\gamma(u)}=(\sigma(u)-1)\left( \tfrac{\xi}{\gamma(u)}-\xi_0 \right)$ and using hypothesis \eqref{eq_hypothesis_psi} we obtain
    \begin{align*}
        |K_\sigma(\xi) - c_\psi| & \leqslant \int_\R (\sigma(u)-1)\left|\left( \tfrac{\xi}{\gamma(u)}-\xi_0 \right)\vphi'\left( \xi_0 +\left( \tfrac{\xi}{\gamma(u)}-\xi_0 \right)(1+\theta_{u,\xi}(\sigma(u)-1)) \right) \right|\tfrac{\gamma'(u)}{\gamma(u)} du \\
        & \leqslant \int_\R (\sigma(u)-1)\frac{A_\psi}{1+\theta_{u,\xi}(\sigma(u)-1)}\tfrac{\gamma'(u)}{\gamma(u)}du \\
        & \leqslant A_\psi \int_\R (\sigma(u)-1)\tfrac{\gamma'(u)}{\gamma(u)}du \\
        & =A_\psi \int_{\R^+}\left(\sigma(\gamma^{-1}(y))-1\right)\frac{dy}{y}\ ,
    \end{align*}
    which achieves the proof of the Proposition.
\end{proof}

\subsubsection{Lower bound control}

The following result guarantees the existence of a positive lower bound that only depends on the wavelet $\psi$. Under the hypothesis~\ref{FFTFT.assumption.psi}. in Section~\ref{ssec:def.FFTFT}, we have the existence of $a<b\in\R_+^*$ such that
\begin{equation}
        \forall y\in (a,b),\ |\hat{\psi}(y)|^2 \geqslant \frac{|\hat{\psi}(\xi_0)|^2}2\ .
\end{equation}
We can then prove
\begin{prop}\label{prop_lower_bound_kernel_basic}
Let $\sigma\in C_0^+(\R)+1$ be a focus function. 
Then
    \begin{equation}
        \forall \xi>0, \ K_{\sigma}(\xi)\geqslant \frac{|\hat{\psi}(\xi_0)|^2}{2}\ln\left(\frac{b}{a}\right).
    \end{equation}
\end{prop}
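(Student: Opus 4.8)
The plan is to bound the nonnegative kernel $K_\sigma$ from below by keeping only the contributions coming from arguments of $\hat\psi$ that stay in the interval $(a,b)$, where $|\hat\psi|^2\geqslant |\hat\psi(\xi_0)|^2/2$. Fixing $\xi>0$ and writing $q(y):=\sigma\circ\gamma^{-1}(\xi/y)\geqslant 1$, the identity $\sigma\circ\gamma^{-1}(\tfrac\xi y)\,y-\xi_0\bigl(\sigma\circ\gamma^{-1}(\tfrac\xi y)-1\bigr)=\xi_0+q(y)(y-\xi_0)$ puts the kernel of Corollary~\ref{cor_second_expression_kernel} in the form
\begin{equation*}
K_\sigma(\xi)=\int_{\R_+}\bigl|\hat\psi\bigl(\xi_0+q(y)(y-\xi_0)\bigr)\bigr|^2\,\frac{dy}{y}\ .
\end{equation*}
Since the integrand is nonnegative, restricting the integration to the set $S_\xi:=\{y>0:\ \xi_0+q(y)(y-\xi_0)\in(a,b)\}$ gives $K_\sigma(\xi)\geqslant \tfrac12|\hat\psi(\xi_0)|^2\int_{S_\xi}\tfrac{dy}{y}$, so the whole statement reduces to proving $\int_{S_\xi}\tfrac{dy}{y}\geqslant\ln(b/a)$.

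To handle this I would study the map $\Phi\colon y\mapsto \xi_0+q(y)(y-\xi_0)$ on $(0,\infty)$. It is continuous (continuity of $\sigma$ and $\gamma^{-1}$), it satisfies $\Phi(\xi_0)=\xi_0\in(a,b)$, and since $\sigma-1$ vanishes at $\pm\infty$ and $\gamma^{-1}$ sends $0^+$ and $+\infty$ to $-\infty$ and $+\infty$, one has $q(y)\to1$, hence $\Phi(y)\to 0$ as $y\to 0^+$ and $\Phi(y)\to +\infty$ as $y\to+\infty$. Thus $\Phi$ is onto $(0,\infty)$ by the intermediate value theorem and $S_\xi=\Phi^{-1}((a,b))$ is a nonempty open neighbourhood of $\xi_0$. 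The benchmark is the ``undistorted'' case $q\equiv1$, where $\Phi$ is the identity and $S_\xi=(a,b)$ has logarithmic measure exactly $\ln(b/a)$; the goal is then to show that replacing the identity by $\Phi$ cannot lose any of this mass.

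The main obstacle lies precisely in this last step. Because $q\geqslant1$ one checks easily (splitting into $y\gtrless\xi_0$) that $\Phi(y)\in(a,b)$ forces $y\in(a,b)$, so automatically $S_\xi\subseteq(a,b)$ and $\int_{S_\xi}\tfrac{dy}{y}\leqslant\ln(b/a)$; the content of the proposition is therefore entirely the reverse inequality, i.e.\ that $(a,b)\setminus S_\xi$ carries no $\tfrac{dy}{y}$-mass. This cannot follow from $q\geqslant1$ alone and must genuinely use that $q$ is continuous with $q\to1$ at the two ends — the case where $q$ has a tall, narrow spike being the delicate one to control. Once $\int_{S_\xi}\tfrac{dy}{y}\geqslant\ln(b/a)$ is secured, the stated lower bound $K_\sigma(\xi)\geqslant \tfrac12|\hat\psi(\xi_0)|^2\ln(b/a)$ follows immediately, uniformly in $\xi$ and in $\sigma$.
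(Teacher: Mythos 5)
Your reduction is set up correctly and coincides with the one in the paper: the identity $\sigma\circ\gamma^{-1}(\tfrac{\xi}{y})\,y-\xi_0\bigl(\sigma\circ\gamma^{-1}(\tfrac{\xi}{y})-1\bigr)=\xi_0+q(y)(y-\xi_0)$ is right, the bound $K_\sigma(\xi)\geqslant\tfrac12|\hat\psi(\xi_0)|^2\int_{S_\xi}\tfrac{dy}{y}$ is valid, and your observation that $q\geqslant1$ forces $S_\xi\subseteq(a,b)$ is correct. But the argument stops exactly at the step that carries all the content: the inequality $\int_{S_\xi}\tfrac{dy}{y}\geqslant\ln(b/a)$ is announced, declared to be ``the main obstacle'', and never established. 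Worse, once you know $S_\xi\subseteq(a,b)$, that inequality would force $S_\xi$ to exhaust $(a,b)$ up to a $\tfrac{dy}{y}$-null set, and this genuinely fails for admissible focus functions: if $q\equiv Q$ on a subinterval $J=(\xi_0+\varepsilon,\xi_0+2\varepsilon)\subset(\xi_0,b)$ with $Q\varepsilon>b-\xi_0$ (realized by a perfectly legitimate $\sigma\in C_0^+(\R)+1$ with a bump), then $\Phi(y)=\xi_0+Q(y-\xi_0)>b$ for every $y\in J$, so $J\cap S_\xi=\emptyset$ and $\int_{S_\xi}\tfrac{dy}{y}\leqslant\ln(b/a)-\int_J\tfrac{dy}{y}<\ln(b/a)$. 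So the route ``keep only the contributions landing in $(a,b)$'' cannot, by itself, deliver the stated constant; any completion would have to recover the lost mass from values of $|\hat\psi|^2$ outside $(a,b)$, or argue differently.

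For comparison: the paper's proof takes the same route (in the variable $y\mapsto\xi/y$) and crosses this step by deducing $\int_{\alpha_\xi^{-1}((a,b))}\tfrac{dy}{y}\geqslant\int_{\xi/b}^{\xi/a}\tfrac{dy}{y}$ from the image inclusion $\alpha_\xi((\xi/b,\xi/a))\supseteq(a,b)$. That implication goes the wrong way: surjectivity onto $(a,b)$ says nothing about the measure of the preimage, and the elementary argument you give (the analogue of $S_\xi\subseteq(a,b)$) shows that the preimage is \emph{contained in} $(\xi/b,\xi/a)$, not the reverse. So the obstacle you isolated is real and is not resolved in the paper's own proof either; the ``tall narrow spike'' scenario you flag is precisely the case in which the asserted preimage bound fails.
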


\begin{proof}
    Let us fix $\xi>0$. Using the notation 
    \[
    \alpha_\xi(y)=\beta_{\gamma^{-1}(y)}(\xi) = \left(\sigma\circ\gamma^{-1}\right)(y)\tfrac{\xi}{y}-\xi_0\left(\left(\sigma\circ\gamma^{-1}\right)(y)-1\right)\ ,
    \]
    we can write
    \begin{equation*}
        K_\sigma(\xi)= \int_{\R_+} |\hat{\psi}(\alpha_\xi(y))|^2\frac{dy}{y}\ .
    \end{equation*}
    Since $\alpha_\xi$ is continuous, we have
    \begin{align*}
        \alpha_\xi((\xi/b,\xi/a))&\supset \left(\alpha_\xi(\xi/b),\alpha_\xi(\xi/a)\right) \\
        & = \left((a-\xi_0)\left(\sigma\circ\gamma^{-1}\right)(1/a)+\xi_0 , (b-\xi_0)\left(\sigma\circ\gamma^{-1}\right)(1/b)+\xi_0 \right)\\
        & \supset (a,b)\ .
    \end{align*}
    Indeed, since $a-\xi_0<0$ and $b-\xi_0>0$, together with the fact that $\sigma(u)\ge 1$ for all $u$, we have 
    \begin{align*}
        (a-\xi_0)\left(\sigma\circ\gamma^{-1}\right)(1/a) &\leqslant a-\xi_0 \ , \\
        (b-\xi_0)\left(\sigma\circ\gamma^{-1}\right)(1/b) &\geqslant b-\xi_0 \ .
    \end{align*}
    Hence
    \begin{align*}
        \int_{\R^+} |\hat{\psi}(\alpha_\xi(y))|^2\frac{dy}{y} &\geqslant \frac{|\hat{\psi}(\xi_0)|^2}{2}\int_{\alpha_\xi^{-1}((a,b))}\frac{dy}{y} \\
        & \geqslant \frac{|\hat{\psi}(\xi_0)|^2}{2} \int_{\xi/b}^{\xi/a}\frac{dy}{y} \\
        & \geqslant \frac{|\hat{\psi}(\xi_0)|^2}{2}\ln\left(\frac{b}{a}\right)\ ,
    \end{align*}
    which proves the proposition, and yields the expression of the bound given in Remark~\ref{rem:FFTFT.lower.bound}.
\end{proof}

\section{Numerical illustrations}
\label{se:numerical.experiments}
We provide in this section illustrations of the frequency and time focus functions introduced in the core of the paper. We stress that these do not intend to address specific applied problems, but simply to show  that such focus functions can indeed be designed and achieve well targeted goals.

Stepping from continuous time functions to discrete signals requires choosing a discretization scheme. Our approach here was to limit ourselves to uniform, frequency or scale independent, time sampling. In other words, we stick to very redundant time-frequency/scale transforms, and do not address discretization issues such as the ones developed in classical frame theory, which we consider beyond the scope of this paper. 

\subsection{Illustration of time focus}
\label{sec:app.time.foc.example}
We illustrate the time focus effect using a simple example of time focus function, applied to a real audio signal. For the sake of simplicity, we take $\gamma(\omega)=\omega$ for all $\omega\in\R$. Given some signal $f\in L^2(\R)$, we denote by $V f = M^\tau_{\sigma_\text{ref}}f$ the transform of $f$, with a focus function uniformly equal to a reference scale $\sigma_\text{ref}$, and define
\begin{equation}
\label{eq_time_focus_1}
\sigma_f^\tau(t) = A \int \omega^{n} |V f(t,\omega)|\,d\omega + B\ ,
\end{equation}
where $n\in\N$ is a fixed integer, and $A,B>0$ are real constants that can be adjusted so that for all $t$,
\[
1\le \sigma_f^\tau (t)\le\sigma_{\text{max}}\ ,
\]
for some prescribed maximal focus $\sigma_{\text{max}}$.

\begin{figure}[h!]
    \centering
    \includegraphics[scale=.5]{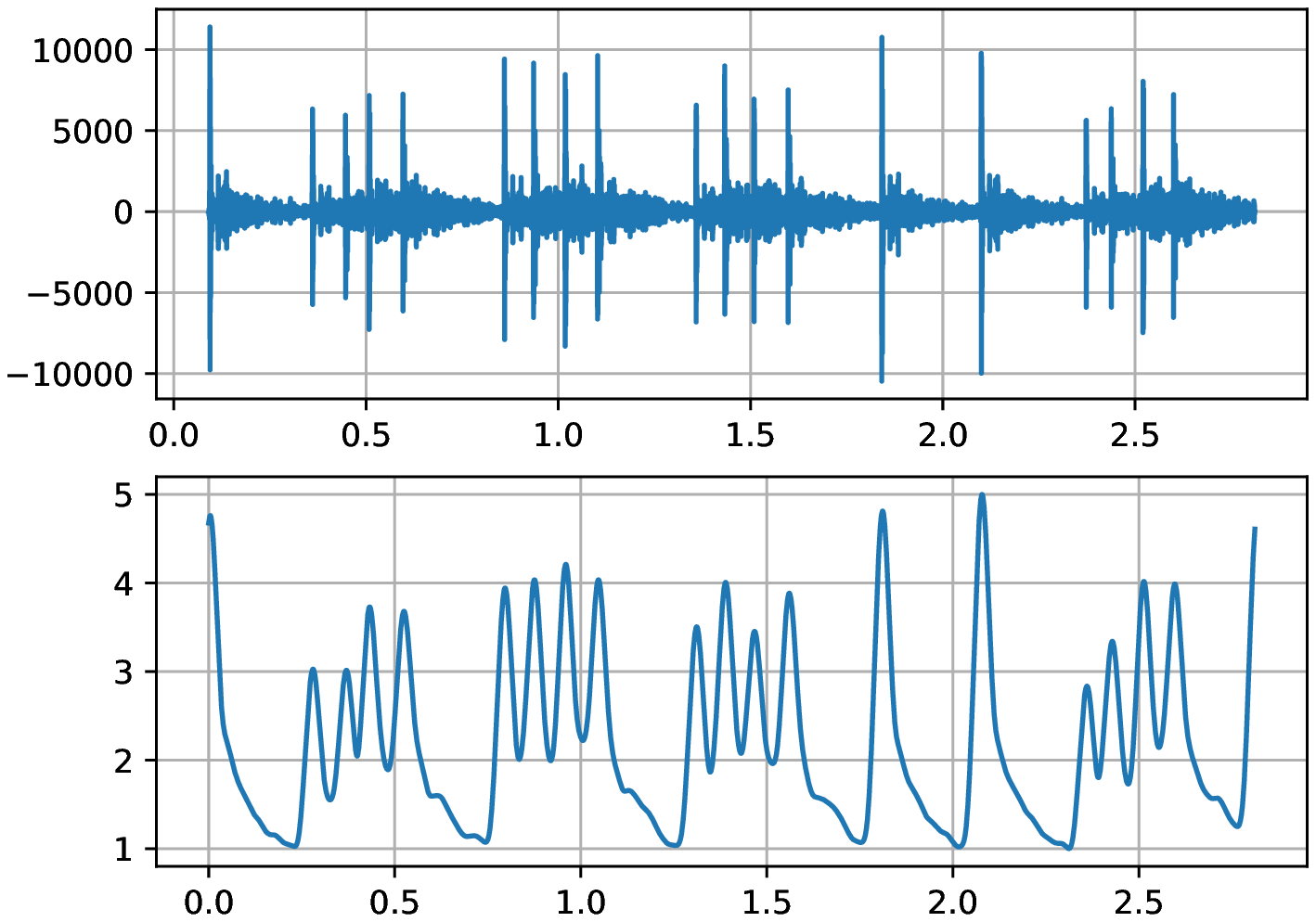}
    \caption{Castanet signal and corresponding time focus function, defined by Equation~\eqref{eq_time_focus_1}}.
    \label{fi:casta.signal}
\end{figure}

\medskip

We display in Fig.~\ref{fi:casta.signal} a 3.5 seconds excerpt from a castanet sound recording (from the SQAM assessment database~\cite{EBU1988SoundQA}), and the corresponding focus function estimated using Equation~\eqref{eq_time_focus_1}. The window $h$ was a $10$ milliseconds long truncated Gaussian window (to enforce compact support), and parameters were set to $n=1$, $\sigma_\text{ref}=1$ and $\sigma_\text{max}=5$. As can be seen, the transients are well detected. Fig.~\ref{fi:casta.spgrams} represents the spectrograms obtained with the unfocused transform ($\sigma(t)=1$ for all $t$), and the focused transform. The latter features sharper attacks, the invervals in between attacks being unchanged.

\begin{figure}
    \centering
    \hspace{-17.5mm}
    \includegraphics[width=.58\textwidth]{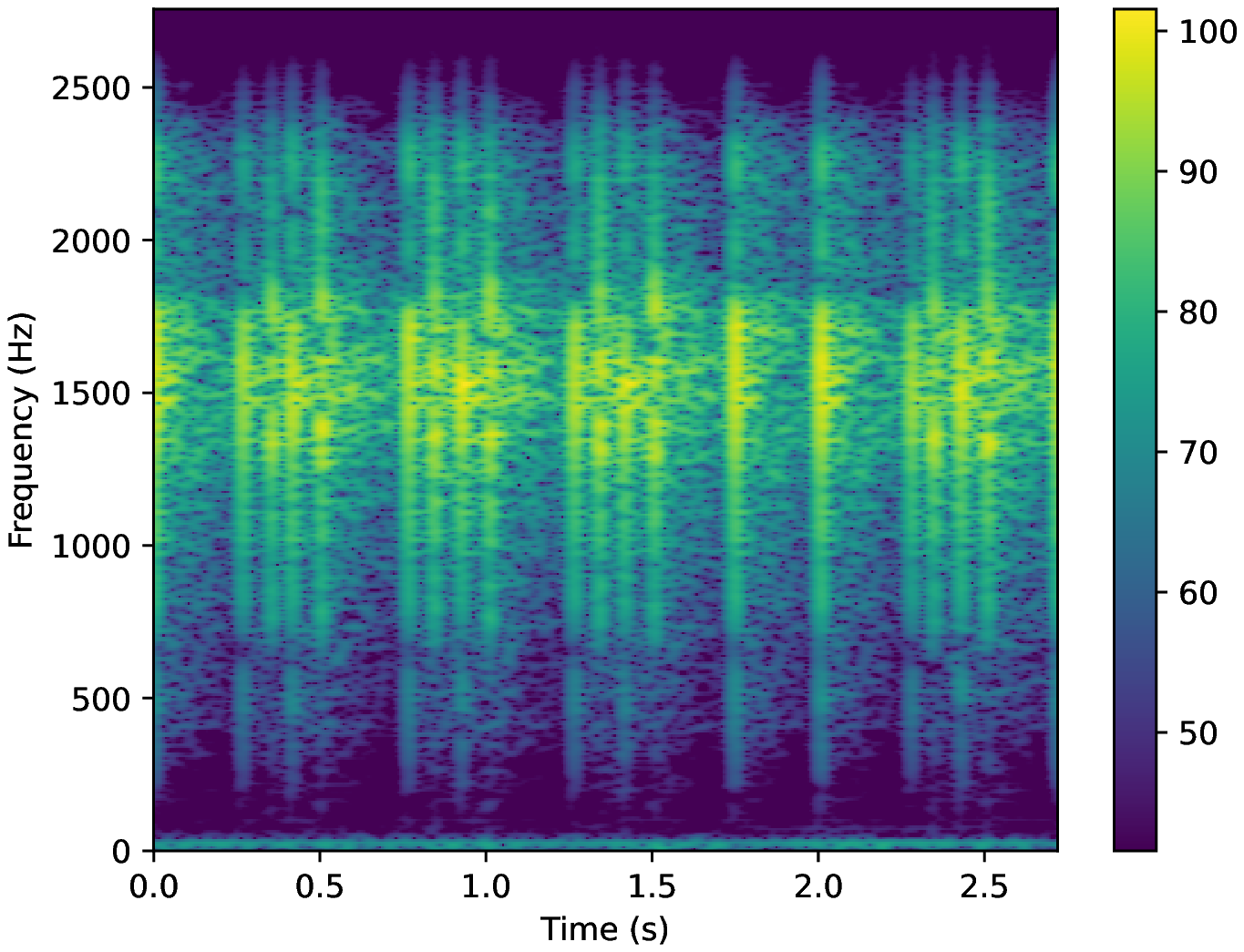}
    \hspace{-12mm}
    \includegraphics[width=.58\textwidth]{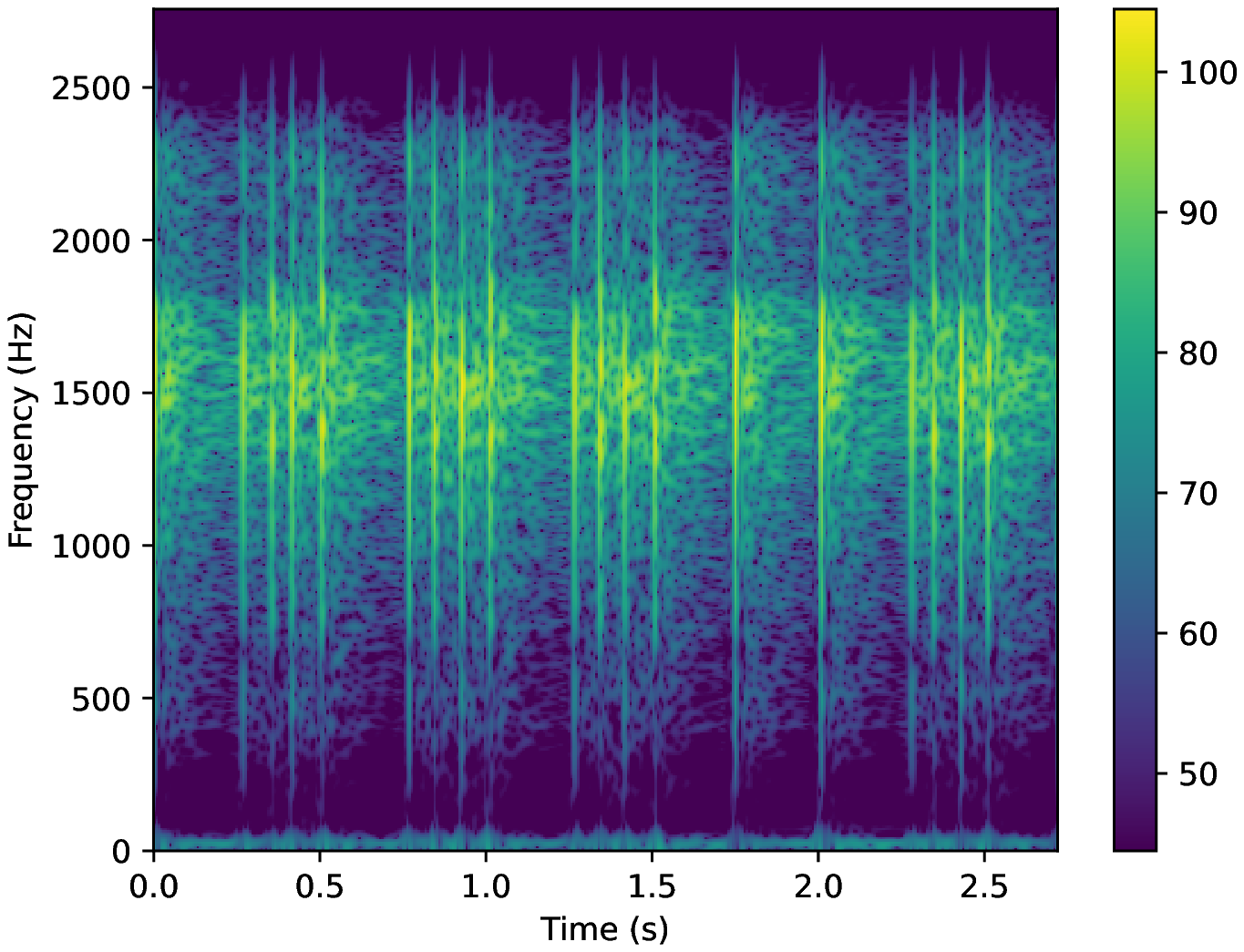}\hspace{-22mm}
    \caption{Log-spectrograms for the castanet signal plotted in Fig.~\ref{fi:casta.signal}. Left: unfocused; Right: focused, using the focus function defined in~\eqref{eq_time_focus_1} with parameters given in the text.}
    \label{fi:casta.spgrams}
\end{figure}

\begin{figure}[h!]
    \centering
    \hspace{-11mm}
    \includegraphics[scale=.45]{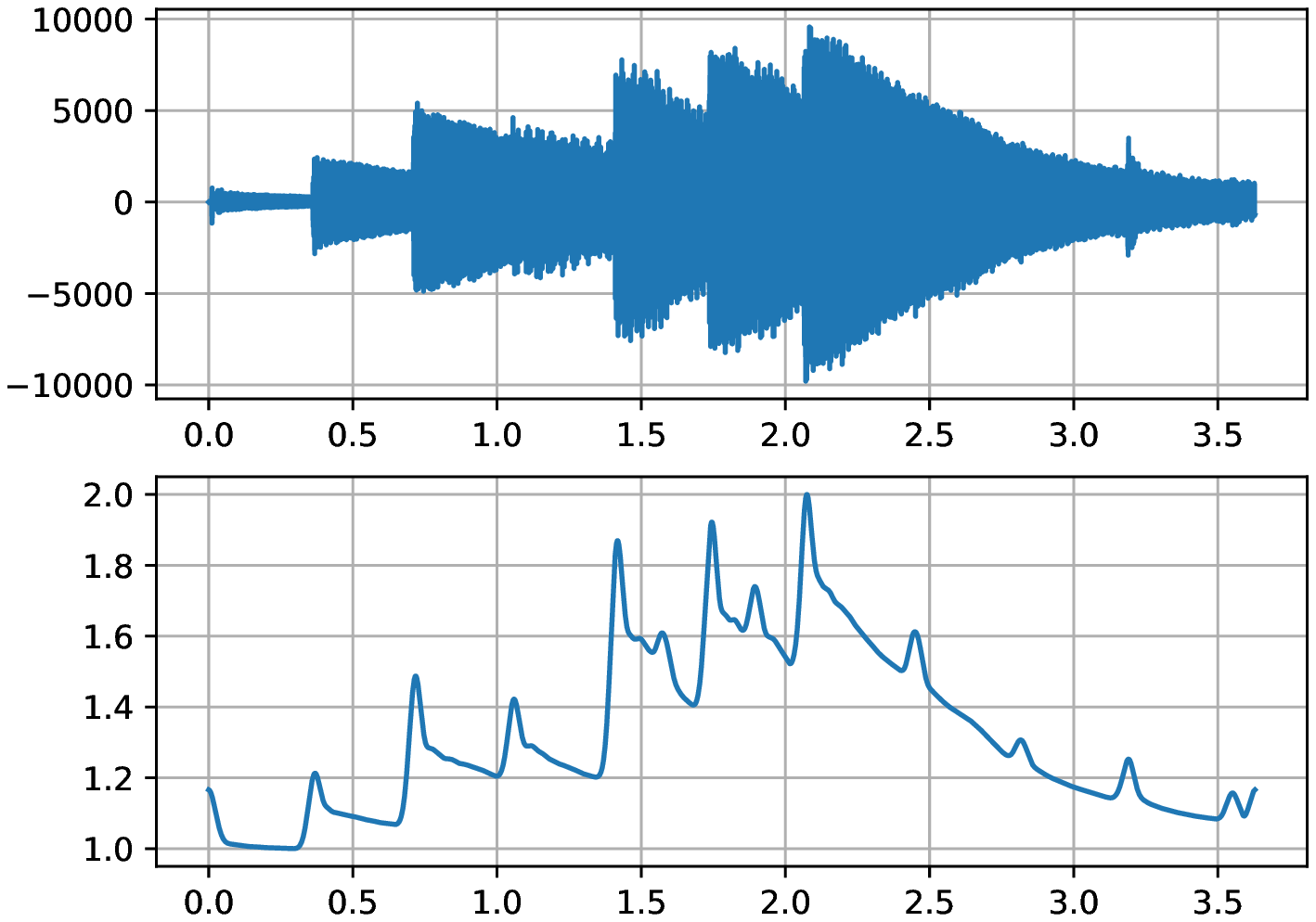}
    \hspace{-9mm}
    \includegraphics[scale=.45]{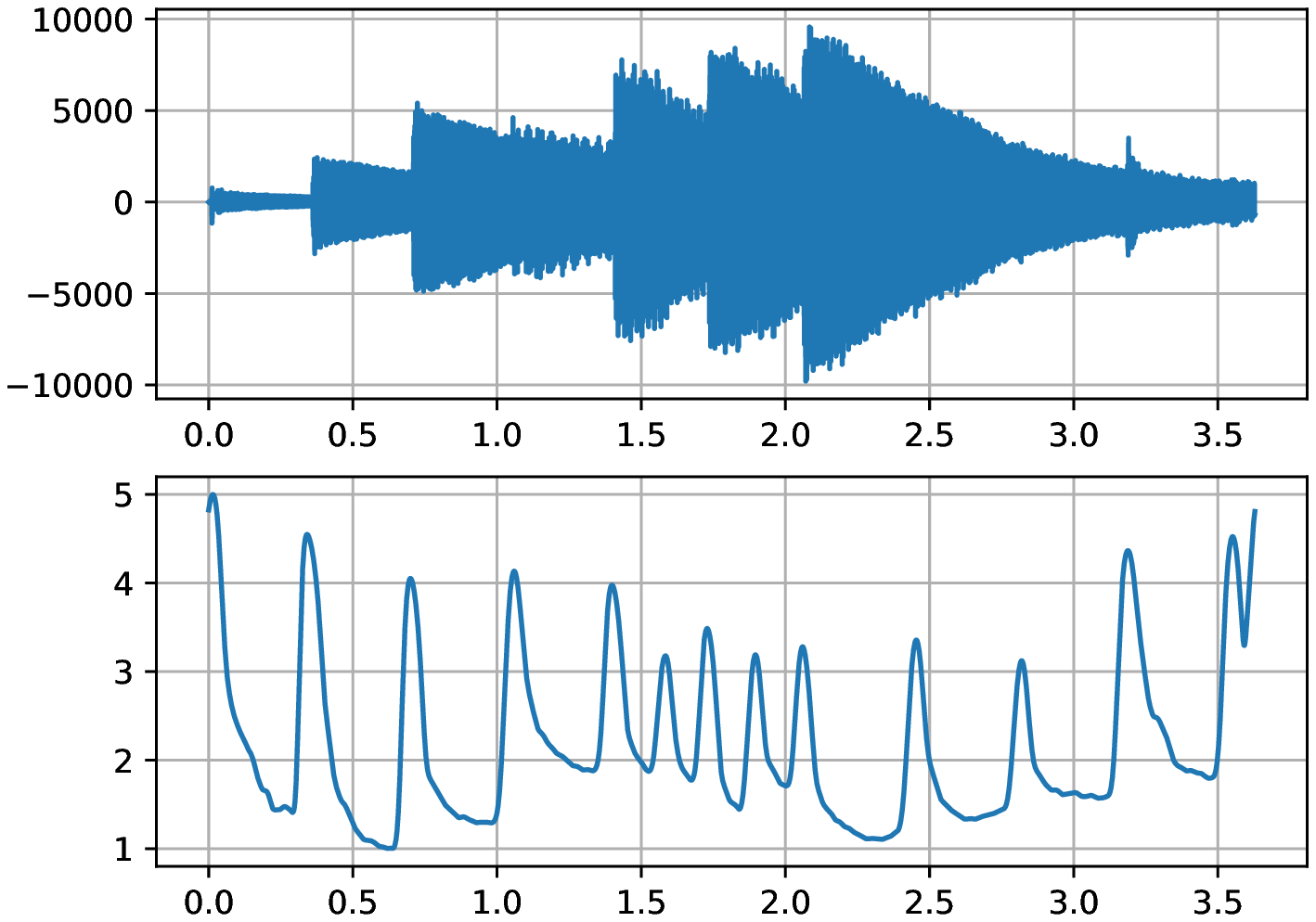}
    \hspace{-15mm}
    \caption{Glockenspiel signal (top) and corresponding time focus functions. Left: focus function as defined in Equation~\eqref{eq_time_focus_1}. Right: focus function as defined in Equation~\eqref{eq_time_focus_2}.}
    \label{fi:glock.signal}
\end{figure}

This example is quite an easy one, as the signal only contains transients. The same focus function performs worse on a slightly more complex signal, that features significant harmonic components together with transients. We display in Fig.~\ref{fi:glock.signal} a 3.5 seconds excerpt from a glockenspiel sound recording (available from the companion web site of~\cite{Jaillet2007time}), together with the corresponding focus function (bottom left-hand panel). As can be seen, the time focus function~\eqref{eq_time_focus_1} detects the attacks of notes, but the decay is much slower than it was for the castanet signal, and the focus effect on the resulting spectrogram (not shown here) is not satisfactory. In fact, the focus function in~\eqref{eq_time_focus_1} is indeed sensitive to transients, but also on the local energy of the signal. Increasing the value of $n$ does not seem to improve.

As an alternative, we display in the bottom right-hand panel of Fig.~\ref{fi:glock.signal} the focus function based upon the entropy of fixed-time slices of the reference spectrogram (suitably normalized to unit norm).
\begin{equation}
\label{eq_time_focus_2}
    \sigma_f(t) = - A \int |\widetilde V f(t,\omega)| \log |\widetilde V f(t,\omega)|\, d\omega\,+B\ ,
\end{equation}
where $\widetilde V f(t,\omega) = Vf(t,\omega)/\|V_f(t,\cdot)\|_{L^1(\R,d\omega)}$. Parameters $A$ and $B$ were again set to ensure $1\le\sigma(t)\le\sigma_\text{max}=5$.

The rationale is that slices that do not correspond to transient events exhibit a sparser behavior, and can therefore be expected to possess a small entropy. The right-hand panel of Fig.~\ref{fi:glock.signal} shows that the estimated focus function is indeed sensitive to transients, independently of the local amplitude (which is clear from the construction in~\eqref{eq_time_focus_2}).
The corresponding spectrograms are displayed in Fig.~\ref{fi:glock.spgrams.entropy}, from which a better focus effect can be seen on the transient attacks of the instrument. However, the sustained parts have lost their frequency resolution in parts of the signal featuring close transients (in the middle segment of the signal).

Entropy seems to be a valuable choice for building a time focus function. Let us nevertheless stress that the construction depends on several parameters, including the reference focus $\sigma_\text{ref}$ involved in the reference STFT $Vf$, and the maximal allowed value $\sigma_\text{max}$. One may also investigate extensions built upon Renyi entropies, which provide different measures of spreading in the time-frequency domain, as shown in~\cite{Jaillet2007time}.
\begin{figure}
    \centering
    \hspace{-17mm}
    \includegraphics[width=.58\textwidth]{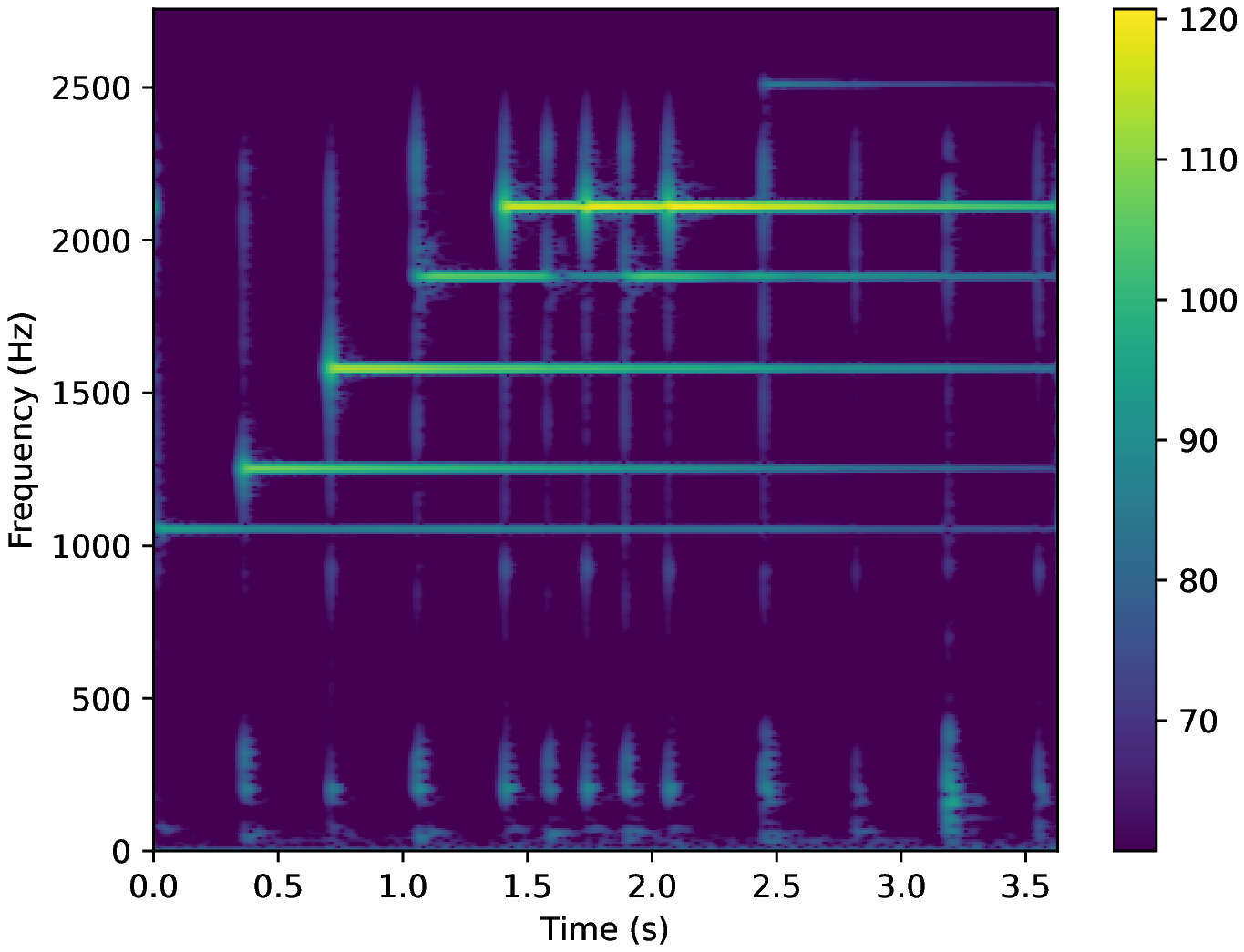}
    \hspace{-12mm}
    \includegraphics[width=.58\textwidth]{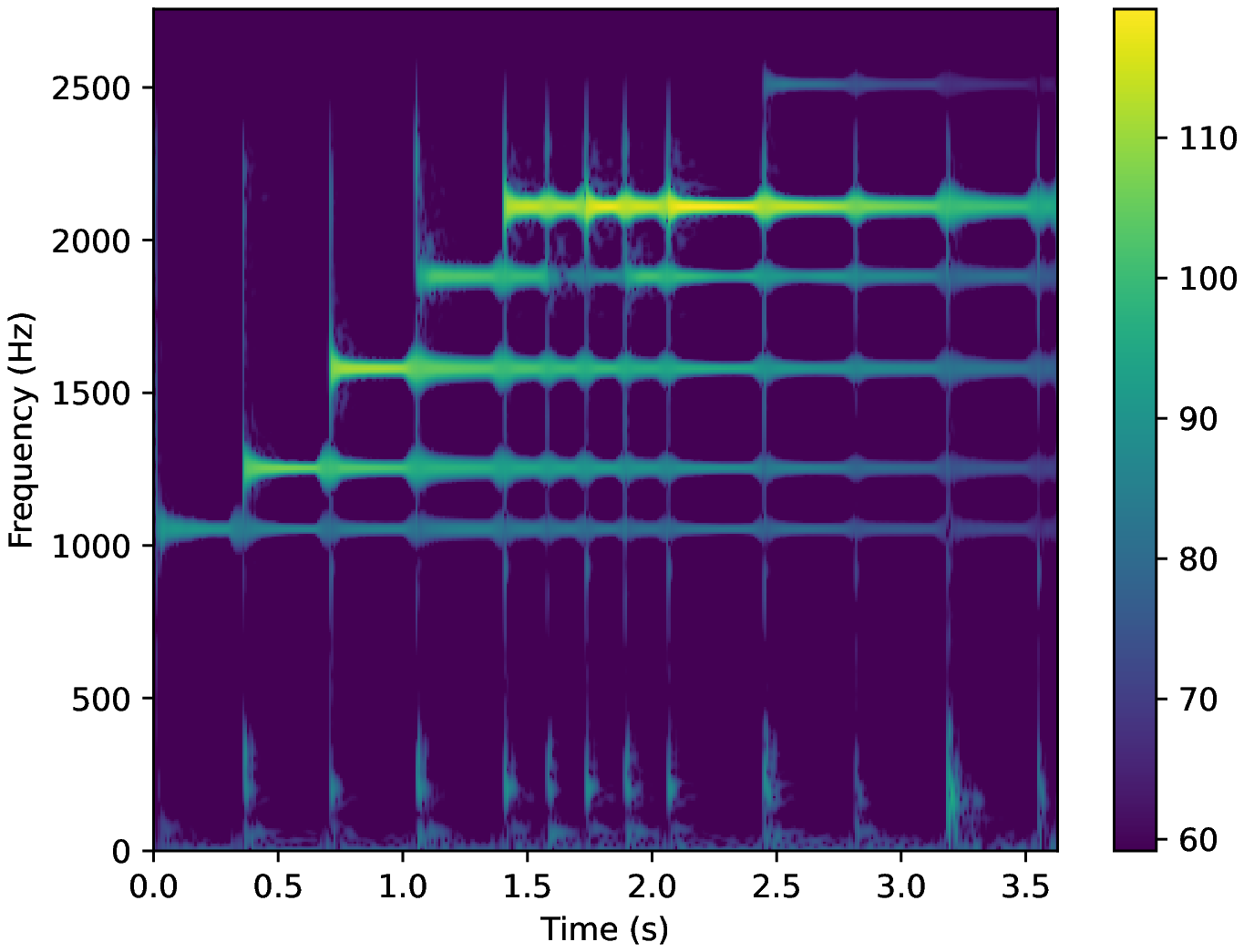}
    \hspace{-22mm}
    \caption{Spectrograms for the glockenspiel signal and focus function plotted in the right hand panel of Fig.~\ref{fi:glock.signal}. Left: unfocused; Right: focused, using the entropy-based focus function defined in~\eqref{eq_time_focus_2}.}
    \label{fi:glock.spgrams.entropy}
\end{figure}

\subsection{Illustration of frequency focus}
\label{sec:app.freq.foc.example}
We now illustrate the behavior of the frequency-focused transform. Again, we will build a focus function using an entropy measure, based upon fixed-frequency slices of a standard continuous wavelet transform
\begin{equation}
\label{eq_freq_focus}
\sigma(u) = - A\int \left|\widetilde{W}f(t,u)\right| \log \left|\widetilde{W}f(t,u)\right| du\,+B\ ,    
\end{equation} 
where $\widetilde{W}f(t,u) = Wf(t,u)/\|Wf(\cdot,u)\|_{L^1(\R)}$ is a normalized continuous wavelet transform (equivalently a frequency-focused transform with focus function uniformly equal to $\sigma_\text{ref}=1$). We have chosen here the simplest choice $\gamma(u)=e^u$. Again, $A$ and $B$ are parameters which are adjusted so that $1\le\sigma(u)\le\sigma_\text{max}$, for all $u$ and for some prescribed maximal focus $\sigma_\text{max}$.
\begin{figure}[h!]
    \centering
    \hspace{-17mm}
    \includegraphics[scale=.45]{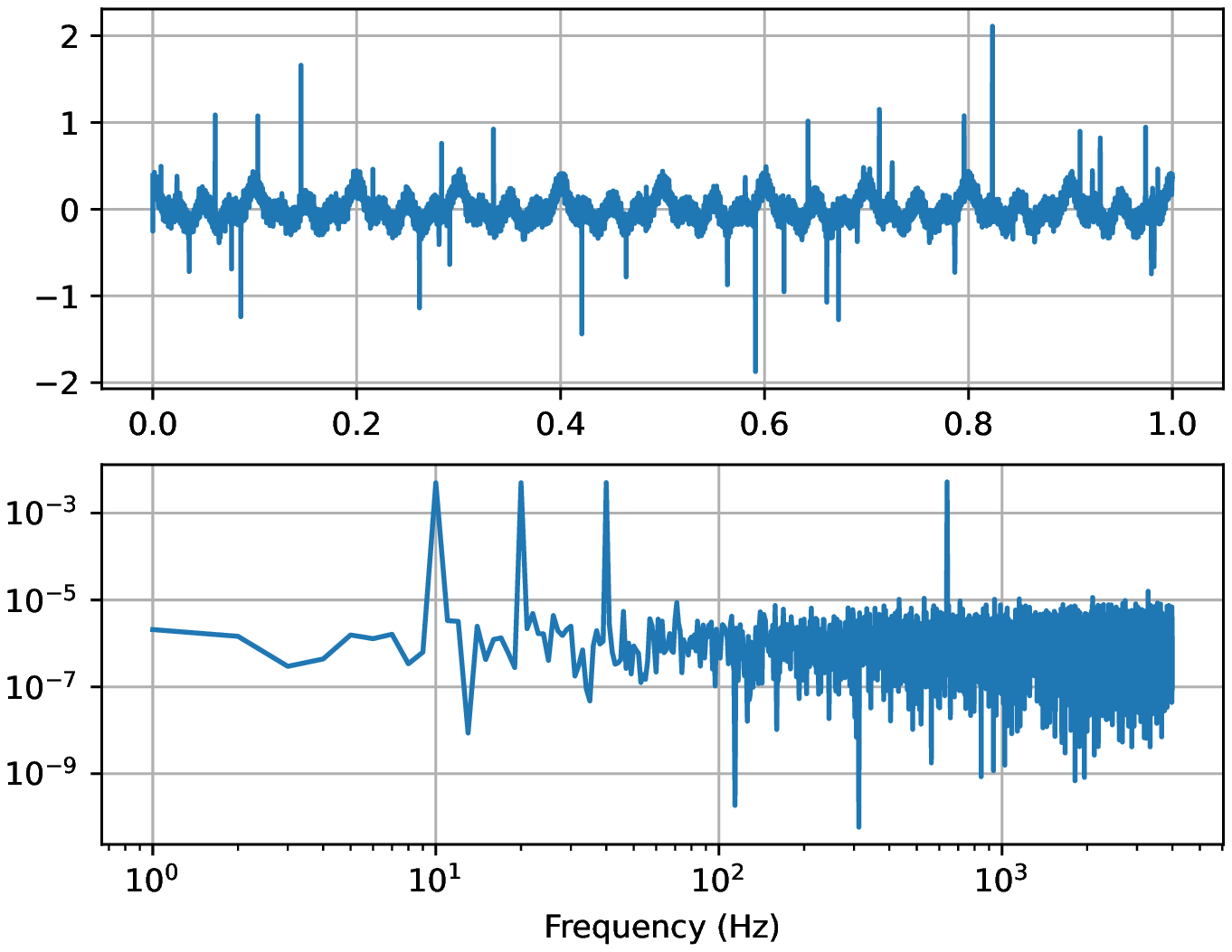}
    \hspace{-8mm}
    \includegraphics[scale=.45]{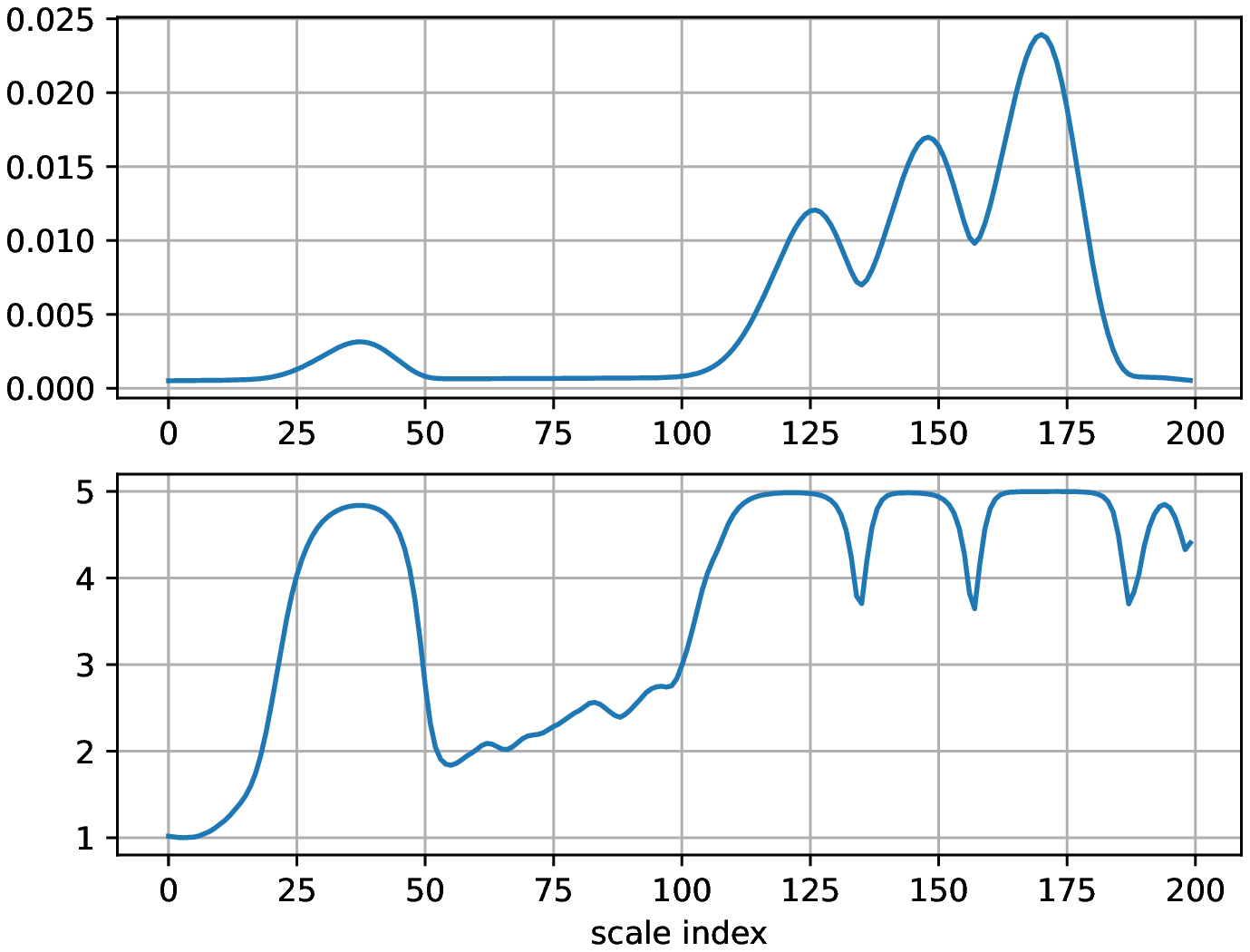}
    \hspace{-20mm}
    \caption{Left: toy signal (top) and periodogram (bottom, loglog scale). Right: wavelet spectrum of the simulated signal and corresponding frequency focus function as defined in Equation~\eqref{eq_freq_focus}.}
    \label{fi:simul_freq_focus}
\end{figure}
We display in Fig.~\ref{fi:simul_freq_focus} the simulated signal and its periodogram (square modulus of Fourier transform) on the left, and its wavelet spectrum and the frequency focus. The  simulated signal is composed of the sum of four sine waves at different frequencies with equal amplitudes, randomly located spikes with random amplitudes (50 spikes) and Gaussian white noise. The wavelet spectrum is defined as the time-average of the continuous wavelet transform modulus displayed in Fig.~\ref{fi:simul.scgrams.entropy}, left panel.

Obviously, the frequency focus is insensitive to the different amplitudes of the four sine waves in the wavelet domain. The resulting effect is visible on the scalograms (modulus of time-scale transforms) on the right panel of Fig.~\ref{fi:simul.scgrams.entropy}, where the frequency resolution has clearly been increased for displaying the four sine waves, and is weakly changed elsewhere, in particular at smallest scales. It is also worth observing that the localization of spikes at small scales from wavelet maxima appears simpler, since these lines of maxima are less affected by the presence of the sine wave.
We didn't consider a real example for illustrating the frequency focus, since constant amplitude sine waves rarely appear in real signals. Most often, sine waves start at a given time and their amplitude decays with time, which is not accounted for by the simple criterion illustrated here. The latter could be adapted to be used inside time segments, after a prior time segmentation. Such an extension would hover require additional modeling work, and is beyond the scope of this paper.

\begin{figure}
    \centering
    \hspace{-17mm}
    \includegraphics[width=.58\textwidth]{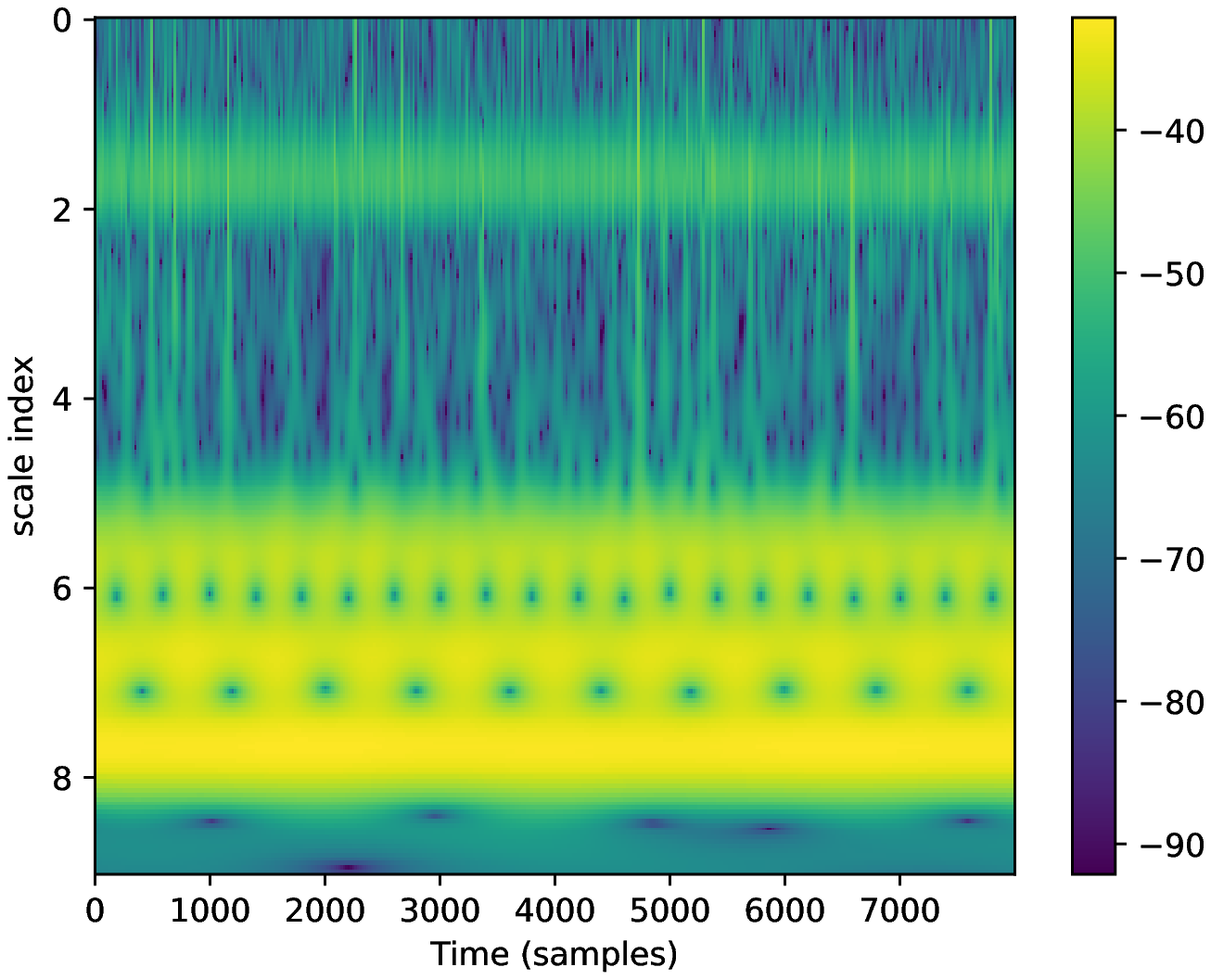}
    \hspace{-12mm}
    \includegraphics[width=.58\textwidth]{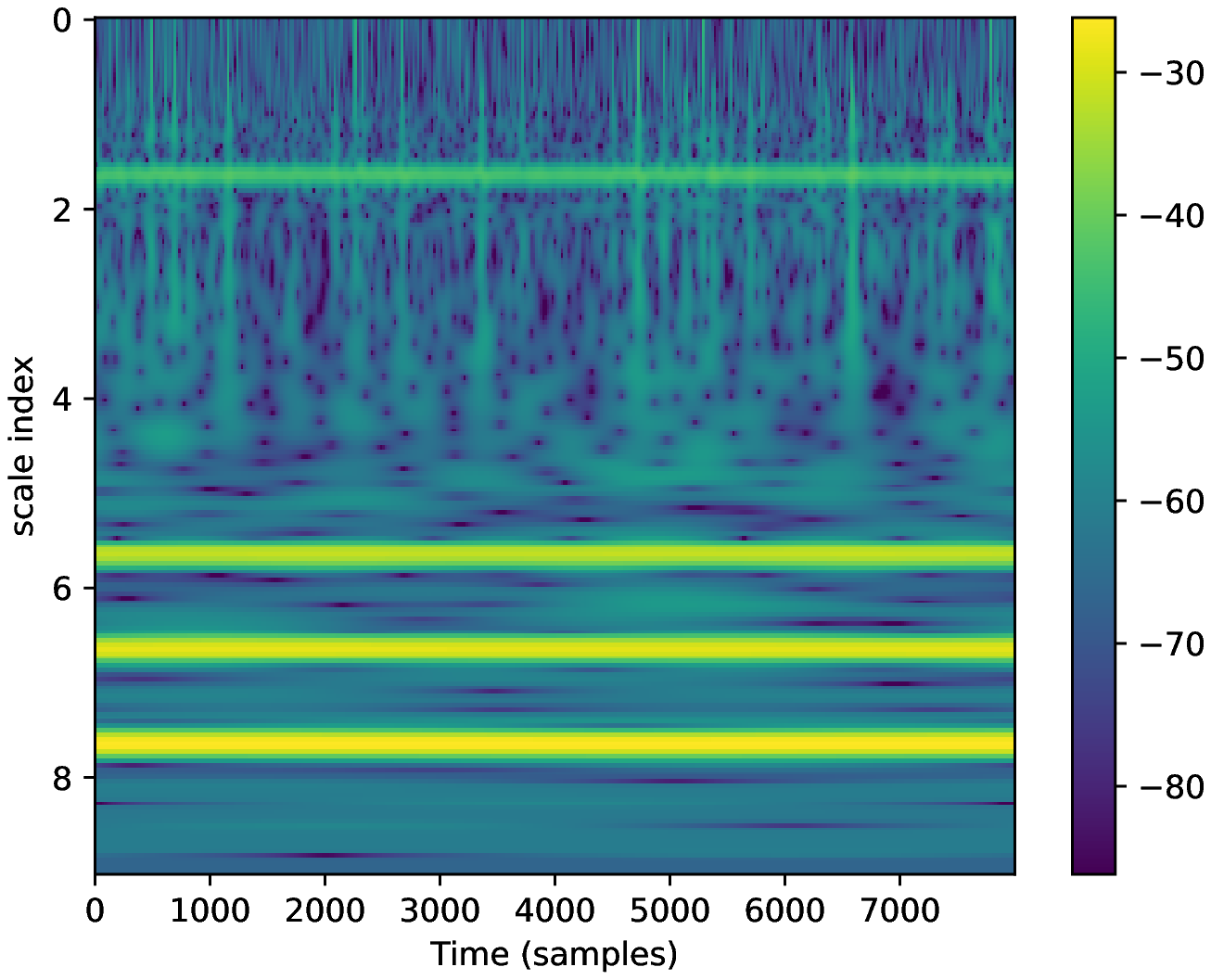}
    \hspace{-22mm}
    \caption{Scalograms for the glockenspiel signal and frequency focus function plotted in Fig.~\ref{fi:simul_freq_focus}. Left: unfocused; Right: focused, using the entropy-based focus function defined in~\eqref{eq_freq_focus}.}
    \label{fi:simul.scgrams.entropy}
\end{figure}

\section{Conclusion}\label{sec_conclusion}

We introduced in this paper new time-scale-frequency transforms that can adapt their time-frequency resolution to the analyzed signal, through the frequency domain and time domain focus functions $f\to\sigma^\nu_f$ and $f\to\sigma^\tau_f$. Based upon short time Fourier transform or continuous wavelet transform, the proposed transforms adapt dynamically the scale/bandwidth of analysis windows or wavelet as a function of frequency or time, leading to non-linear transforms. Under suitable assumptions on focus functions, we could prove first important results on the transforms such as the well-definedness on $L^2$, and norm controls similar to the one obtained in the linear case.

In Theorems~\ref{theo_bounding_time_marseillan_transform} and~\ref{theorem_norm_control_frequency_focus}, we obtain a control of the type
\[
\sup_{\substack{f\in L^2(\R)\\ f\ne 0}} \frac{\|Mf\|_{L^2}}{\|f\|_{L^2}} = \sup_{\substack{f\in L^2(\R)\\ f\ne 0}} C_f^{1/2}\ ,
\]
where $C_f$ depends on $f\in L^2(\R)$ only through the focus function $\sigma_f$. More specific assumptions on the focus functions are needed to insure a finite upper bound for $C_f$ with $f\in L^2$. For example, one may specify that $1\le\sigma_f\le\sigma_\text{max}$ for some prescribed $\sigma_\text{max}$, as we did in numerical illustrations. It would be  interesting to study more thoroughly generic mappings $f\to\sigma_f$ and derive sufficient conditions insuring the finiteness of $C_f$.

Note that, due to the non-linearity of the transform, the above quantity doesn't define a norm for the transform. Lipschitz continuity, \ie\, the existence of a constant $C(M)$ such that
\begin{equation}\label{eq_lipchitz_constant}
\forall f_1,f_2\in L^2\ ,\quad \|Mf_1 - Mf_2 \|_{L^2} \leqslant C(M)\|f_1-f_2\|_{L^2}
\end{equation}
would clearly be of interest too (see also generalized operator norms introduced and studied in~\cite{wei2020development}).
Proving the existence of such Lipschitz constants for $M^\nu$ and  $M^\tau$ would ensure their uniform continuity. 
However, none of the two above mentioned results is strong enough to prove the existence of a Lipschitz constant that satisfies Equation~\eqref{eq_lipchitz_constant}.
We plan to follow this line in the near future.

Of interest too for the inversion of the non-linear transforms would be to investigate which conditions would guarantee the existence of a a constant $c(M)$ such that
\begin{equation}
    \forall f_1,f_2\in L^2(\R)\ ,\quad  c(M)\|f_1-f_2\|_{L^2} \leqslant\|Mf_1 - Mf_2 \|_{L^2}\ .
\end{equation}
Such property would guarantee injectivity of the non-linear transform. Again, the lower bounds provided in Theorems~\ref{theo_bounding_time_marseillan_transform} and~\ref{theorem_norm_control_frequency_focus} are not sufficient to yield directly injectivity, even though the bound does not depend on the analyzed function $f$.

\smallskip
A main further goal will be to study the invertibility of such non-linear transforms. From our results, inverse transforms can be obtained if both the transform $Mf$ and the focus function $\sigma_f$ are known, but not in situations where only the transform $Mf$ is known. A first step would be to analyze in which conditions an approximate inverse can be obtained when an approximation of the focus function is available. The above-mentioned problems are likely to play a role for this question. This may open the door to iterative inversion methods.

\smallskip
Last but not least, we plan to head to concrete applications of this approach, in particular in the context of audio perception modelling, which was one of the main motivations for this work. For that, we plan to investigate further focus functions that could be relevant in applications, starting from the simple models and examples described in Section~\ref{se:numerical.experiments}, and study more thorough applications to real signals.

\section*{Additional information}
On behalf of all authors, the corresponding author states that there is no conflict of interest. 
This work didn't benefit from any specific funding.
No data is associated to this work.
Authors contributed equally to this work.

	\bibliography{biblio.bib}



\end{document}